\documentclass[preprint,10pt]{elsarticle}

\usepackage{amsmath,amsfonts,amssymb,amsthm}
\usepackage{xcolor}
\usepackage[algo2e, ruled, linesnumbered]{algorithm2e}
\usepackage{doi}
\usepackage{graphicx}
\usepackage{afterpage}
\usepackage[protrusion=true,tracking=false,kerning=true]{microtype}


\title{Rational Dual Certificates for Weighted Sums-of-Squares Polynomials with Boundable Bit Size \tnoteref{t1}}
\tnotetext[t1]{This article is part of the volume titled ``Computational Algebra and Geometry: A special issue in memory and honor of Agnes Szanto''.}

\hypersetup{
  pdftitle={Rational Dual Certificates for Weighted Sums-of-Squares Polynomials with Boundable Bit Size},
  pdfauthor={Maria M. Davis, D{\'a}vid Papp}
}

\author[1]{Maria M. Davis}
\author[1]{D{\'a}vid Papp\corref{cor1}\fnref{fn1,fn2}}
\ead{dpapp@ncsu.edu}

\affiliation[1]{organization={North Carolina State University, Department of Mathematics},
addressline={\mbox{Campus Box 8205}},
city={Raleigh},
postcode={NC 27695},
country={USA}}

\cortext[cor1]{Corresponding author. E-mail: \texttt{dpapp@ncsu.edu}}
\fntext[fn1]{Dedicated to Ágnes: a most generous and dedicated mentor, and a wonderful colleague. You will be missed.}
\fntext[fn2]{This material is based upon work supported by the National Science Foundation under Grant No. DMS-1847865.}

\allowdisplaybreaks

\newcommand{\deletethis}[1]{}
\newcommand{\revision}[1]{{#1}}

\DeclareMathOperator{\cond}{cond}
\DeclareMathOperator{\bd}{bd}
\DeclareMathOperator{\tr}{tr}

\newcommand{\be}{\begin{enumerate}}
\newcommand{\ee}{\end{enumerate}}
\newcommand{\bi}{\begin{itemize}}
\newcommand{\ei}{\end{itemize}}
\newcommand{\zz}{\mathbb{Z}}
\newcommand{\rr}{\mathbb{R}}
\newcommand{\nn}{\mathbb{N}}

\newcommand{\R}{\mathbb{R}}
\newcommand{\bS}{\mathbb{S}}
\newcommand{\Oh}{\mathcal{O}}

\newcommand{\Sc}{\Sigma^\circ}      
\newcommand{\Ssc}{(\Sigma^*)^\circ} 
\newcommand{\defeq}{\ensuremath{\overset{\mathrm{def}}{=}}}  

\newcommand{\cC}{\mathcal{C}}
\newcommand{\cP}{\mathcal{P}}

\newcommand{\cV}{\mathcal{V}}
\newcommand{\vA}{\mathbf{A}}

\newcommand{\vd}{\mathbf{d}}
\newcommand{\ve}{\mathbf{e}}

\newcommand{\vI}{\mathbf{I}}
\newcommand{\vM}{\mathbf{M}}

\newcommand{\vp}{\mathbf{p}}
\newcommand{\vq}{\mathbf{q}}
\newcommand{\vs}{\mathbf{s}}
\newcommand{\vS}{\mathbf{S}}
\newcommand{\vt}{\mathbf{t}}
\newcommand{\vu}{\mathbf{u}}
\newcommand{\vv}{\mathbf{v}}
\newcommand{\vw}{\mathbf{w}}
\newcommand{\vx}{\mathbf{x}}
\newcommand{\vy}{\mathbf{y}}
\newcommand{\vb}{\mathbf{b}}
\newcommand{\vz}{\mathbf{z}}
\newcommand{\vzero}{\mathbf{0}}
\newcommand{\vone}{\mathbf{1}}
\newcommand{\T}{\mathrm{T}}

\newtheorem{example}{Example}[section]
\newtheorem{theorem}[example]{Theorem}
\newtheorem{corollary}[example]{Corollary}
\newtheorem{proposition}[example]{Proposition}
\newtheorem{lemma}[example]{Lemma}
\newtheorem{definition}[example]{Definition}
\newtheorem{remark}[example]{Remark}

\begin{document}
\begin{abstract} In (Davis and Papp, 2022), the authors introduced the concept of dual certificates of (weighted) sum-of-squares polynomials, which are vectors from the dual cone of weighted sums of squares (WSOS) polynomials that can be interpreted as nonnegativity certificates. This initial theoretical work showed that for every polynomial in the interior of a WSOS cone, there exists a rational dual certificate proving that the polynomial is WSOS. In this article, we analyze the complexity of rational dual certificates of WSOS polynomials by bounding the bit sizes of integer dual certificates as a function of parameters such as the degree and the number of variables of the polynomials, or their distance from the boundary of the cone. After providing a general bound, we explore several special cases, such as univariate polynomials nonnegative over the real line or a bounded interval, represented in different commonly used bases. We also provide an algorithm which runs in rational arithmetic and computes a rational certificate with boundable bit size for a WSOS lower bound of the input polynomial.
\end{abstract}
\begin{keyword}
polynomial optimization \sep nonnegativity certificates \sep sums-of-squares decomposition \sep computational real algebraic geometry \sep conic programming
\end{keyword}
\maketitle

\allowdisplaybreaks

\section{Introduction} 
It is well known that nonnegative polynomials with rational coefficients in the interior of sum-of-squares cones are \emph{sums of rational squares}; that is, they have sum-of-squares decompositions that are expressed entirely in terms of rational coefficients and can be verified using rational arithmetic \cite{Powers2011}. The complexity of these rational certificates of nonnegativity can be measured by the bit size of the largest magnitude coefficient in the decomposition; bounding the complexity of the ``simplest’’ certificate and establishing its dependence on relevant parameters such as the degree, the number of variables, or the polynomial’s distance from the boundary of the cone are major open questions. The corresponding algorithmic question is how efficiently these decompositions can be computed in rational arithmetic. Surprisingly, polynomial-time algorithms are difficult to design, and tight complexity bounds of known sum-of-squares decomposition algorithms are hard to come by even in the univariate case \cite{MagronSafeyElDinSchweighofer2019}.

The recent paper by Magron and Safey El Din \cite{MagronSafeyElDin2021-corrected} gives an in-depth review of the state-of-the-art on the complexity of deciding and certifying the nonnegativity or positivity of polynomials over basic semialgebraic sets using SOS certificates; we only recall a few highlights.

The paper \cite{MagronSafeyElDinSchweighofer2019} focuses on univariate nonnegative polynomials over the real line. The most efficient algorithm they analyze returns SOS certificates of bit size $\mathcal{O}(d^4 + d^3\tau)$, wherein $d$ is the degree of the polynomial and $\tau$ is the bit size of the largest magnitude coefficient; a slight improvement over the results in \cite{BoudaoudCarusoRoy2008}, which consider (P\'olya-type) WSOS certificates of positivity for univariate polynomials over $[-1,1]$. These algorithms cannot be generalized to the multivariate case. The multivariate case is considered first in \cite{MagronSafeyElDin2021}, and substantially corrected in the report \cite{MagronSafeyElDin2021-corrected}, by analyzing the bit complexity of a hybrid numerical-symbolic algorithm that recovers exact rational WSOS decompositions from an approximate (numerical) WSOS decomposition of a suitably perturbed polynomial. \revision{The main result in the corrected manuscript is that for $n$-variate SOS polynomials of degree $d$, the coefficients in the SOS certificates have bit sizes of order $\Oh\left(\tau d^{d^{\Oh(n)}}\right)$.}


In this paper, we study these questions in the context of \emph{dual certificates}. Dual certificates were introduced in \cite{DavisPapp2022} by the authors, motivated by (and building on) the duality theory of convex conic optimization, which has seen a number of recent applications in real algebraic geometry \cite{KatthanNaumannTheobald2021,Papp2023}. They are rational vectors from the dual cone of WSOS polynomials that by definition can be represented as a vector with far fewer components than a conventional WSOS decomposition: their dimension is independent of the number of weights, and they avoid the explicit representation of the large positive semidefinite Gram matrices that characterize conventional SOS decompositions. In \cite{DavisPapp2022}, it was established that polynomials in the interior of a WSOS cone have rational dual certificates, also providing new elementary proofs of Powers’s theorems from \cite{Powers2011}. Following up on this work, we now study the bit sizes of the components of dual certificates, as well as exact-arithmetic algorithms for the computation of rational dual certificates.

In the first part of the paper, we show that dual certificates can be rounded (trivially, componentwise) to ``nearby’’ rational dual certificates with computable, ``small'' denominators. This follows from a quantitative version of a property of dual certificates that (in contrast to conventional WSOS decompositions) every WSOS polynomial has a full-dimensional cone of dual certificates. In turn, these rational certificates can be converted to integer dual certificates with boundable bit size. In Section~\ref{sec:gradbounds}, we establish our general results, which are applicable to any WSOS cone certifying nonnegativity over arbitrary basic, closed semialgebraic sets, including unbounded ones. We then provide refinements for the most frequently studied and applied special cases, including univariate polynomials over the real line and over bounded intervals in Section \ref{sec:gradcerts-bases}. \revision{E.g., for univariate polynomials over the real line, we show that every positive polynomial with integer coefficients of bit size at most $\tau$ (in the monomial basis) has an integer dual certificate whose components are of bit size $\mathcal{O}\left(d\tau + d\log(d)\right)$---an improvement from the aforementioned result of \cite{MagronSafeyElDinSchweighofer2019} and from the $n=1$ special case of the bounds obtained in \cite{MagronSafeyElDin2021-corrected}.}

In the second part of the paper (Section \ref{sec:computing}), we provide an algorithm that takes a polynomial with rational coefficients as its input, and computes a sequence of rational lower bounds converging to the optimal sum-of-squares lower bound, along with a corresponding sequence of rational dual certificates certifying these bounds.
The algorithm is based on the one proposed in \cite{DavisPapp2022}, which is an almost entirely numerical hybrid method. Although the method in \cite{DavisPapp2022} is capable of computing rational lower bounds and dual certificates via floating point computations, it is limited by the precision of the floating point arithmetic, and the bit sizes of the computed certificates cannot be bounded.
The new algorithm proposed in this paper, Algorithm \ref{alg:Newton2}, runs entirely in infinite precision (rational) arithmetic. We show that all intermediate computations can be carefully rounded to nearby rational vectors with small denominators in each step, while still maintaining the property that the algorithm converges $q$-linearly to the optimal weighted sums-of-squares lower bound.

\subsection{Preliminaries}\label{sec:preliminaries}
Here, we cover notation and background that we will use throughout the rest of this paper. 

\subsubsection{Weighted SOS polynomials and positive semidefinite matrices}

Recall that a convex set $K\subseteq\R^n$ is called a \emph{convex cone} if for every $\vx\in K$ and $\lambda \geq 0$ scalar, the vector $\lambda\vx$ also belongs to $K$. A convex cone is \emph{proper} if it is closed, \emph{full-dimensional} (meaning $\operatorname{span}(K)=\R^n$), and \emph{pointed} (that is, it does not contain a line). We shall denote the interior of a proper cone $K$ by $K^\circ$ and the boundary of a proper cone $K$ by $\bd(K)$. 

The dual of a convex cone $K\subseteq\R^n$ is the convex cone $K^*$ defined as
\[ K^* = \left\{ \vy \in \R^n\,|\,\forall\vx\in K: \vx^\T\vy \geq 0\right\}. \]

\paragraph{Sum-of-squares (SOS) polynomials} Let $\mathcal{V}_{n,2d}$ denote the cone of $n$-variate polynomials of degree $2d$. We say that a polynomial $p \in \mathcal{V}_{n,2d}$ is \emph{sum-of-squares} (SOS) if there exist polynomials $q_1,\dots,q_k \in \mathcal{V}_{n,d}$ such that $p = \sum_{i=1}^kq_i^2$. Define $\Sigma_{n,2d}$ to be the cone of $n$-variate SOS polynomials of degree $2d$. The cone $\Sigma_{n,2d}\subset \mathcal{V}_{n,2d} \equiv \R^{\binom{n+2d}{n}}$ is a proper cone for every $n$ and $d$. Throughout, we will identify polynomials with their coefficients vectors (typeset bold) in a basis that is clear from the context (but not necessarily in the monomial basis), e.g., $\vt$ for the polynomial $t(\cdot)$ and $\vone$ for the constant one polynomial.

\paragraph{Weighted sum-of-squares} More generally, let $\vw = (w_1,\dots,w_m)$ be some given nonzero polynomials and let $\mathbf{d} = (d_1,\dots,d_m)$ be a nonnegative integer vector. We denote by $\cV_{n,2\mathbf{d}}^\vw$ the space of polynomials $p$ for which there exist $r_1 \in \mathcal{V}_{n,2d_1}, \dots, r_m \in \cV_{n,2d_m}$ such that $p = \sum_{i=1}^m w_ir_i$. A polynomial $p \in\mathcal{V}_{n,2\mathbf{d}}^\vw$ is said to be \emph{weighted sum-of-squares} (WSOS) if there exist $\sigma_1 \in \Sigma_{n,2d_1}, \dots, \sigma_m \in \Sigma_{n,2d_m}$ such that $p = \sum_{i=1}^mw_i\sigma_i$. It is customary to assume that $w_1=1$, that is, the ordinary ``unweighted'' sum-of-squares polynomials are also included in the WSOS cones. Let $\Sigma_{n,2\mathbf{d}}^\vw$ denote the set of WSOS polynomials in $\cV_{n,2\mathbf{d}}^\vw$. \revision{This is nearly identical to the notion of the \emph{truncated quadratic module}, except that the degree of each SOS polynomial is independently selected, rather than by ``truncating'' to a desired total degree. In this manner, $\Sigma_{n,2\mathbf{d}}^\vw$ is automatically a full-dimensional convex cone in the ambient space $\mathcal{V}_{n,2\mathbf{d}}^\vw$ by definition.} Additionally, under mild conditions, the cone $\Sigma_{n,2\mathbf{d}}^\vw$ is closed and pointed; for example, it is sufficient that the set
\begin{equation}\label{eq:Swdef}
S_\vw \defeq \{\vx\in\R^n\,|\,w_i(\vx)\geq 0,\,i=1,\dots,m\}
\end{equation}
is a unisolvent point set for the space $\mathcal{V}_{n,2\mathbf{d}}^\vw$ \cite[Prop.~6.1]{PappYildiz2019}. (A set of points $S\subseteq\rr^n$ is \emph{unisolvent} for a space of polynomials $\mathcal{V}$ if every polynomial in $\mathcal{V}$ is uniquely determined by its function values at $S$.) In particular, this implies that both $\Sigma_{n,2\vd}^\vw$ and its dual cone have non-empty interiors, a crucial assumption throughout the paper.

\paragraph{WSOS polynomials and positive semidefinite matrices}
We will denote the set of $n\times n$ real symmetric matrices by $\bS^n$, and the cone of positive semidefinite $n\times n$ real symmetric matrices by $\bS^n_+$. When the dimension is clear from the context, we use the common shorthands $\vA\succcurlyeq 0$ to denote that the matrix $\vA$ is positive semidefinite and $\vA\succ 0$ to denote that the matrix $\vA$ is positive definite. 

\revision{
It is well-known and easily seen that a polynomial $s$ belongs to $\Sigma_{n,2d}$ if and only if
\[ s(\cdot) = v_{n,d}(\cdot)^\T \vS v_{n,d}(\cdot), \]
wherein $v_{n,d}$ denotes the vector of $n$-variate monomials up to degree $d$, and $\vS \in \bS_+^L$ with $L = \binom{n+d}{d}$ is the \emph{Gram matrix} of the SOS polynomial $s$. This functional equality can be expressed coefficient-by-coefficient, identifying the polynomials on both sides of the equation with their coefficient vectors in a fixed basis. For example, if $n=1$ and both polynomials are represented in the monomial basis, we obtain the classic result that $s(t) = \sum_{i=0}^{2d} s_i t^i$ is SOS if and only if there exists a matrix $(S_{jk})_{j,k=0,\dots,d}$ such that $s_i = \sum_{(j,k):i=j+k}S_{jk}$ for each $i$. More generally, every SOS cone $\Sigma_{n,2d}$ is a linear image of the cone $\bS^L_+$, and if we fix a basis for $\cV_{n,2d}$ and a basis for $\cV_{n,d}$, there is an explicitly computable, surjective, \emph{linear} map $\Lambda^*$ from Gram matrices (positive semidefinite matrices) to coefficient vectors of SOS polynomials. From the dual perspective, it is also well-known (and is equivalent to the above statements) that the dual cone $\Sigma_{n,2d}^*$ is a linear pre-image of $\bS^L_+$. More precisely, there exists an injective linear map $\Lambda:\Sigma_{n,2d}^* \to \bS_+^L$. In the context of algebraic geometry and moment theory, $\Lambda(\vy)$ is called the \emph{truncated moment matrix} of the (pseudo-)moment vector $\vy$, and the map $\Lambda^*$ in the representation of the SOS cone is simply the adjoint of $\Lambda$. E.g., in the univariate example above, $\Lambda(\vy)$ is the Hankel matrix of the vector $\vy$.

Although everything in the previous paragraph generalizes from SOS cones to the WSOS case, the conventional notation and terminology involving moment and localizing matrices is rather cumbersome, and is largely unnecessary for this paper. To follow the rest of the paper, it is sufficient to keep in mind that regardless of the number of variables $n$, the degree vector $\vd$, the choice of weights $\vw$, and the polynomial bases used to represent the polynomials of various degrees, \emph{the WSOS cone $\Sigma_{n,2\vd}^\vw$ is a linear image of the cone of positive semidefinite matrices of appropriate size under some surjective linear map $\Lambda^*$, and similarly, its dual $(\Sigma_{n,2\vd}^\vw)^*$ is a linear pre-image of the same cone, under the adjoint map $\Lambda$}. The following Proposition makes these statements precise.
}

\begin{proposition}[\protect{\cite[Thm.~17.6]{Nesterov2000}}]\label{thm:Nesterov} Fix an ordered basis $\vq = (q_1,\dots,q_U)$ of $\mathcal{V}^\vw_{n,2\mathbf{d}}$ and an ordered basis $\vp_{i} = (p_{i,1},\dots,p_{i,L_i})$ of $\mathcal{V}_{n,d_i}$ for each $i = 1,\dots,m$. Let $\Lambda_i: \cV_{n,2\vd}^\vw \left(\equiv \rr^U\right) \to \mathbb{S}^{L_i}$ be the unique (injective) linear map satisfying $\Lambda_{i}(\mathbf{q}) = w_i\mathbf{p}_i\mathbf{p}_i^\T$, and let $\Lambda_i^*$ denote its adjoint. Then $\mathbf{s} \in \Sigma_{n,2\mathbf{d}}^\vw$ if and only if there exist matrices \mbox{$\mathbf{S}_1\succcurlyeq \mathbf{0}, \dots, \mathbf{S}_m \succcurlyeq \mathbf{0}$} satisfying 
\begin{equation}\label{eq:Nesterov-Lambdastar}
\vs = \sum_{i=1}^m\Lambda_i^*(\mathbf{S}_i).
\end{equation}
Additionally, the dual cone of $\Sigma_{n,2\mathbf{d}}^\vw$ admits the characterization 
\begin{equation}\label{eq:Nesterov-Lambda}
\left(\Sigma^\vw_{n,2\mathbf{d}}\right)^* = \left\{\vx \in \cV_{n,2\vd}^\vw \left(\equiv \rr^U\right) \ | \ \Lambda_i(\vx) \succcurlyeq \mathbf{0} ~~ \forall i=1,\dots,m\right\}.
\end{equation}
\end{proposition}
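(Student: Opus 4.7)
The plan is to reduce the proposition to the classical Gram matrix characterization of ordinary SOS polynomials, extend it to the WSOS setting by linearity, and then obtain the dual characterization by a short self-duality argument for the PSD cone.

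For the primal representation \eqref{eq:Nesterov-Lambdastar}, I would first unpack the defining relation $\Lambda_i(\vq) = w_i \vp_i \vp_i^\T$ in coordinates: expanding each entry of the polynomial-valued matrix $w_i \vp_i \vp_i^\T$ in the basis $\vq$ uniquely specifies $\Lambda_i$, and it simultaneously identifies its adjoint $\Lambda_i^*$ as the linear map sending a symmetric matrix $\vS \in \mathbb{S}^{L_i}$ to the coefficient vector (in the basis $\vq$) of the scalar polynomial $w_i \vp_i^\T \vS \vp_i$. Granting this identification, the forward direction of \eqref{eq:Nesterov-Lambdastar} is immediate from the definition of the WSOS cone: any $s \in \Sigma_{n,2\vd}^\vw$ decomposes as $s = \sum_{i=1}^m w_i \sigma_i$ with $\sigma_i \in \Sigma_{n,2d_i}$, and the classical Gram matrix characterization applied in the basis $\vp_i$ yields $\sigma_i = \vp_i^\T \vS_i \vp_i$ for some $\vS_i \succcurlyeq 0$. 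Substituting and extracting coefficients in $\vq$ gives $\vs = \sum_i \Lambda_i^*(\vS_i)$. The converse follows by reading the same chain of equalities in reverse.

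For the dual characterization \eqref{eq:Nesterov-Lambda}, I would argue directly, without invoking any closure or interior hypothesis. For any $\vS_i \succcurlyeq 0$ and any $\vx \in \cV_{n,2\vd}^\vw$, the adjoint relation gives
\[ \vx^\T \Lambda_i^*(\vS_i) = \langle \Lambda_i(\vx), \vS_i \rangle. \]
Combined with the primal representation just established, $\vx \in (\Sigma_{n,2\vd}^\vw)^*$ if and only if $\sum_i \langle \Lambda_i(\vx), \vS_i \rangle \geq 0$ for every independent choice of $\vS_1 \succcurlyeq 0,\dots,\vS_m \succcurlyeq 0$; by self-duality of the PSD cone this is in turn equivalent to $\Lambda_i(\vx) \succcurlyeq 0$ for each $i$.

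The main difficulty is purely notational: one must carefully track the distinction between a polynomial $s$ and its coordinate vector $\vs$ in the basis $\vq$, and ensure that the bases $\vq$ and $\vp_i$ are handled consistently when moving between the functional equality $s = w_i \vp_i^\T \vS_i \vp_i$ and the coefficient identity $\vs = \Lambda_i^*(\vS_i)$ in $\rr^U$. Once this bookkeeping is done, the proposition is essentially a restatement of the standard SOS--Gram correspondence combined with elementary conic duality; no nontrivial algebraic-geometric input is required.
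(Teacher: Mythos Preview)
Your proof is correct. Note, however, that the paper does not supply its own proof of this proposition: it is stated as a citation to \cite{Nesterov2000}, followed only by an explanatory paragraph on why $\Lambda_i$ is well defined and an illustrative example. Your argument---unwinding $\Lambda_i^*(\vS)$ as the coefficient vector of $w_i\vp_i^\T\vS\vp_i$, invoking the Gram-matrix characterization of SOS in each factor, and deducing the dual description from the adjoint identity together with self-duality of the PSD cone---is precisely the standard route and is what one would expect Nesterov's original proof to look like as well.
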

\revision{To see why $\Lambda_i$ exists and is unique, consider that each entry of the matrix of functions $w_i \vp_i \vp_i^\T$ is a polynomial of the form $w_i p_{i,j}p_{i,k}$, which by definition belongs to the space $\mathcal{V}^\vw_{n,2\mathbf{d}}$, and so it can be written uniquely as a linear combination of our chosen basis polynomials $\{q_1,\dots,q_U\}$ of this space. Thus, for any vector $\vv\in\R^U$, the $(j,k)$-th entry of the matrix $\Lambda_i(\vv)$ is the same linear combination of the components of $\vv$ which would yield $w_i p_{i,j}p_{i,k}$ if it were applied to the basis polynomials $\vq$.}

The interested reader will find a number of examples of WSOS cones $\Sigma$ and the $\Lambda$ operators representing them in different bases in \cite[Example 1]{DavisPapp2022}, using the same notation as in this paper. \revision{We briefly recall only one of them:
\begin{example}
Consider univariate polynomials of degree $4$, nonnegative on $[-1,1]$. These polynomials can be written as $\sigma_1(t) + (1-t^2)\sigma_2(t)$, where $\sigma_1\in\Sigma_{1,4}$ and $\sigma_2\in\Sigma_{1,2}$; that is, they are WSOS with the weights $w_1(t)=1$ and $w_2(t)=1-t^2$ and degree vector $\vd=(2,1)$. Representing all monomials in the monomial basis, it is well-known that $\vx = (x_0,\dots,x_4)\in(\Sigma_{n,2\vd}^\vw)^*$ if and only if
\[ \Lambda_1(\vx) := \left(\begin{smallmatrix}
	x_0 & x_1 & x_2 \\
	x_1 & x_2 & x_3 \\
	x_2 & x_3 & x_4
\end{smallmatrix}\right) \succcurlyeq 0 \text{ and }
\Lambda_2(\vx) := \left(\begin{smallmatrix}
x_0 - x_2 & x_1 - x_3 \\
x_1 - x_3 & x_2 - x_4
\end{smallmatrix}\right) \succcurlyeq 0.\]
The matrix $\Lambda_1(\vx)$ is the moment matrix, while $\Lambda_2(\vx)$ is a localizing matrix for this particular domain \cite{Laurent2009}. In the notation of Proposition \ref{thm:Nesterov}, we have $U=5$, $(L_1,L_2) = (3,2)$, and $\Lambda_i$ matrices were obtained by collecting the monomial terms in the matrices
\[
\left(\begin{smallmatrix}
	1 \\
	t \\
	t^2 
\end{smallmatrix}\right)
\left(\begin{smallmatrix}
	1 & t & t^2
\end{smallmatrix}\right)
=
\left(\begin{smallmatrix}
1 & t & t^2 \\
t & t^2 & t^3 \\
t^2 & t^3 & t^4
\end{smallmatrix}\right)  \text{ and } 
(1-t^2)\left(\begin{smallmatrix}
	1 \\
	t
\end{smallmatrix}\right)
\left(\begin{smallmatrix}
	1 & t
\end{smallmatrix}\right)
=
\left(\begin{smallmatrix}
	1-t^2 & t-t^3\\
	t-t^3 & t^2-t^4\\
\end{smallmatrix}\right).
 \]
\end{example}
}

To further lighten the notation, throughout the paper, we will assume that the weight polynomials $\vw=(w_1,\dots,w_m)$ and the degrees $\vd=(d_1,\dots,d_m)$ are fixed. We will denote the cone $\Sigma_{n,2\vd}^\vw$ by $\Sigma$ and the space of polynomials $\mathcal{V}_{n,2\mathbf{d}}^\vw$ by $\mathcal{V}$. We will usually identify the spaces $\cV$ and $\cV^*$ with $\R^U$ ($U=\dim(\cV)$), equipped with the standard inner product $\langle \vx, \vy \rangle = \vx^\T\vy$ and the induced Euclidean norm $\|\cdot\|$. For (real) square matrices, the inner product $\langle \cdot, \cdot \rangle$ denotes the Frobenius inner product. 
 
 Additionally, we use the shorthand $\Lambda$ to denote the $\R^U\to \bS^{L_1} \oplus \cdots \oplus \bS^{L_m}$ linear map $\Lambda_1(\cdot)\oplus \cdots \oplus\Lambda_m(\cdot)$ from Proposition~\ref{thm:Nesterov}. With this notation, the condition \eqref{eq:Nesterov-Lambdastar} can be written as $\vs=\Lambda^*(\vS)$ for some positive semidefinite (block diagonal) matrix $\vS\in\bS^{L_1} \oplus \cdots \oplus \bS^{L_m}$. Similarly, Eq.~\eqref{eq:Nesterov-Lambda} simplifies to
\begin{equation*}
\Sigma^* = \{\vx\in\R^U\,|\,\Lambda(\vx)\succcurlyeq \vzero\}.
\end{equation*}
The interior of this cone is simply
\begin{equation}\label{eq:intSigmaStar}
\Ssc = \{\vx\in\R^U\,|\,\Lambda(\vx)\succ \vzero\}.
\end{equation}

\subsubsection{Barrier functions and local norms in convex cones}
\revision{The theory of dual certificates builds heavily on results from the theory of barrier functions in convex optimization. Here, we introduce relevant notation, and we give a brief overview of the parts of this theory that will be needed throughout the rest of the paper.}

Let $\Lambda: \R^U \to \bS^L$ be the unique linear mapping specified in Proposition~\ref{thm:Nesterov} above, and let $\Lambda^*$ denote its adjoint. Central to our theory is the \emph{barrier function} $f: \Ssc \to \R$ defined by
\begin{equation}\label{eq:def:f}
    f(\vx) \defeq -\ln(\det(\Lambda(\vx)).
\end{equation}
Note that by Eq.~\eqref{eq:intSigmaStar}, $f$ is indeed defined on its domain. The function $f$ is twice continuously differentiable; we denote by $g(\vx)$ its gradient at $\vx$ and by $H(\vx)$ its Hessian at $\vx$. Since $f$ is strictly convex on its domain, $H(\vx)\succ \vzero$ for all $\vx \in \Ssc$ \revision{\cite[Sec.~3.1.5 and 3.2.2]{BoydVandenberghe2004}}. Consequently, we can also associate with each $\vx\in\Ssc$ the \emph{local inner product} $\langle\cdot,\cdot\rangle_\vx : \cV^* \times \cV^* \to \rr$ defined as $\langle\vy,\vz\rangle_\vx \defeq \vy^\T H(\vx) \vz$ and the \emph{local norm} $\|\cdot\|_\vx$ induced by this local inner product. Thus, $\|\vy\|_\vx = \|H(\vx)^{1/2}\vy\|$. We define the local (open) ball centered at $\vx$ with radius $r$ by $B_{\vx}(\vx,r) \defeq \{\vy\in\cV^*\,|\, \|\vy - \vx\|_{\vx} < r\}$.
Analogously, we define the \emph{dual local inner product} $\langle \cdot,\cdot \rangle_\vx^*: \cV\times\cV\to\R$ by  $\langle \vs, \vt \rangle_\vx^* \defeq \vs^\T H(\vx)^{-1}\vt$. The induced \emph{dual local norm} $\|\cdot\|_{\vx}^*$ satisfies the identity $\|\vt\|_{\vx}^* = \|H(\vx)^{-1/2}\vt\|$. 

\revision{Throughout, we will invoke several useful results concerning these norms and the barrier function $f$ in \eqref{eq:def:f}; these are enumerated in Lemma \ref{thm:f-properties}, in the Appendix of this paper. Geometrically, the key observation is that the Hessian of this barrier function, through the associated local and dual local norms, provides computable ellipsoidal neighborhoods around each point in $\Sc$ and $\Ssc$ that are contained in these cones, yielding ``safe'' bounds to round vectors in any direction without leaving the cone.}

\subsubsection{Dual certificates}\label{sec:dual-certs} 
As mentioned earlier in the introduction, our primary goal is to show the existence of a dual certificate with boundable bit size for a given WSOS polynomial. Here, we review necessary definitions and properties of dual certificates. For more extensive theory of dual certificates, see \cite{DavisPapp2022}.

\begin{definition}\label{def:dual-certificate}
Let $\vs\in\Sigma$, \revision{and denote the Hessian of the barrier function $f$ of $\Sigma^*$ defined in \eqref{eq:def:f} by $H$}. We say that the vector $\vx\in\Ssc$ is a \emph{dual certificate of $\vs$}, or simply that \emph{$\vx$ certifies $\vs$}, if $H(\vx)^{-1}\vs \in \Sigma^*$. We denote by
\[\cC(\vs) \defeq \{\vx\in\Ssc\,|\,H(\vx)^{-1}\vs \in \Sigma^*\}\]
the set of dual certificates of $\vs$. 
Conversely, for every $\vx\in\Ssc$, we denote by
\[\cP(\vx) \defeq \{\vs\in\Sigma\,|\,H(\vx)^{-1}\vs \in \Sigma^*\}\]
the set of polynomials certified by the dual vector $\vx$.
\end{definition}

\revision{
This definition is motivated by the following theorem from \cite{DavisPapp2022}, reproduced below for completeness. In words, the theorem provides an \emph{explicit closed form formula} for efficiently computing a WSOS certificate for any polynomial from its coefficient vector $\vs$ and any dual certificate $\vx$:
\begin{theorem}[\protect{\cite[Thm.~2.2]{DavisPapp2022}}]
Let $\vs\in\Ssc$ be arbitrary. Then the matrix $\vS = \vS(\vx,\vs)$ defined by
\begin{equation}\label{eq:S-def}
\vS(\vx,\vs) \defeq \Lambda(\vx)^{-1}\Lambda\left(H(\vx)^{-1}\vs\right)\Lambda(\vx)^{-1}
\end{equation}
satisfies $\Lambda^*(\vS) = \vs$. Moreover, $\vx$ is a dual certificate for $\vs \in \Sigma$ if and only if $\vS\succcurlyeq 0$, which in turn is equivalent to $H(\vx)^{-1}\vs\in\Sigma^*$. 
\end{theorem}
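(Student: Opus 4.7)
The proof reduces to two separate claims: the identity $\Lambda^*(\vS(\vx,\vs)) = \vs$, and the equivalence $\vS(\vx,\vs)\succcurlyeq 0 \Longleftrightarrow H(\vx)^{-1}\vs \in \Sigma^*$ (noting that the latter equivalence with ``$\vx$ certifies $\vs$'' is Definition~\ref{def:dual-certificate} itself). The plan is to derive both facts from explicit formulas for the gradient $g$ and Hessian $H$ of the barrier $f(\vx) = -\ln\det(\Lambda(\vx))$ on $\Ssc$, together with a standard congruence observation for positive semidefiniteness.

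First, I would compute $g$ and $H$ via the chain rule applied to $-\ln\det$, which, since $\Lambda$ is linear, yields the well-known expressions
\begin{align*}
g(\vx) &= -\Lambda^*\bigl(\Lambda(\vx)^{-1}\bigr), \\
H(\vx)\vy &= \Lambda^*\bigl(\Lambda(\vx)^{-1}\Lambda(\vy)\Lambda(\vx)^{-1}\bigr) \qquad (\vy\in\R^U).
\end{align*}
These are classical identities for log-determinant barriers of linear-image cones and can be verified either by differentiating Jacobi's formula $\mathrm{d}\ln\det(M)=\tr(M^{-1}\mathrm{d}M)$ twice and then taking adjoints, or by direct computation of $\partial_i\partial_j\ln\det(\Lambda(\vx))$ in coordinates and recognizing the result as the stated quadratic form in $\Lambda^*$. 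Because $\Lambda(\vx)\succ 0$ on $\Ssc$, $H(\vx)$ is symmetric positive definite (recovering the strict convexity already noted in the text), so $H(\vx)^{-1}$ is well-defined.

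Given the Hessian formula, the first claim is immediate: apply the Hessian identity with $\vy \defeq H(\vx)^{-1}\vs$, so that
\[
\vs = H(\vx)\vy = \Lambda^*\bigl(\Lambda(\vx)^{-1}\Lambda(H(\vx)^{-1}\vs)\Lambda(\vx)^{-1}\bigr) = \Lambda^*(\vS(\vx,\vs)).
\]
For the second claim, I would invoke the elementary fact that for an invertible symmetric matrix $A$ and any symmetric $M$, $AMA\succcurlyeq 0$ if and only if $M\succcurlyeq 0$ (one direction uses $v^\T AMA v = (Av)^\T M (Av)$ and the surjectivity of $v\mapsto Av$; the other is direct). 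Applying this with $A = \Lambda(\vx)^{-1}\succ 0$ and $M = \Lambda(H(\vx)^{-1}\vs)$ gives
\[
\vS(\vx,\vs)\succcurlyeq 0 \iff \Lambda(H(\vx)^{-1}\vs)\succcurlyeq 0,
\]
and the right-hand side is exactly the characterization \eqref{eq:Nesterov-Lambda} of membership in $\Sigma^*$. Combined with Definition~\ref{def:dual-certificate}, this closes the chain of equivalences.

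No step here is a genuine obstacle: the entire proof is mechanical once the log-determinant derivatives and the congruence invariance of positive semidefiniteness are in hand. If anything, the most error-prone step is bookkeeping the adjoint in the Hessian formula, since $\Lambda$ maps into a space of symmetric matrices and one must be careful that $\Lambda^*$ in the formulas is the adjoint with respect to the Frobenius inner product on the matrix side and the Euclidean inner product on $\R^U$; I would briefly verify this consistency before assembling the final two-line computation.
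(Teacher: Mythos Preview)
Your proof is correct and is essentially the canonical argument. The paper does not reprove this theorem here---it is quoted from \cite{DavisPapp2022}---but the ingredients you use are exactly those the paper records in its Appendix: the Hessian identity $H(\vx)\vy = \Lambda^*\bigl(\Lambda(\vx)^{-1}\Lambda(\vy)\Lambda(\vx)^{-1}\bigr)$ (Eq.~\eqref{eq:H}) gives $\Lambda^*(\vS)=\vs$ by substituting $\vy=H(\vx)^{-1}\vs$, and congruence invariance of positive semidefiniteness together with the characterization~\eqref{eq:Nesterov-Lambda} of $\Sigma^*$ gives the equivalence. Nothing is missing; the block-diagonal structure of $\Lambda$ in the WSOS case causes no trouble since $\Lambda(\vx)^{-1}$ is still invertible and the congruence argument applies verbatim.
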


Note that as long as $\Lambda$ maps rational vectors to rational matrices (which is the case, for instance, when polynomials are represented in commonly used bases such as the standard monomial basis or the Chebyshev basis), then $\vS$ is a rational matrix for every rational coefficient vector $\vs$.
}

It is immediate from Definition \ref{def:dual-certificate} that if $\vx$ is a dual certificate of the polynomial $\vs$, then every positive multiple of $\vx$ is also a dual certificate for every positive multiple of $\vs$. \revision{Crucially, the same is true for \emph{small perturbations} of $\vx$ and $\vs$; see Proposition \ref{thm:x-ycert} below.}

From Lemma~\ref{thm:f-properties} (claim \ref{item:bijection}) in the Appendix, we know that for every $\vs\in\Sigma^\circ$ there exists a unique $\vx \in \Ssc$ satisfying $\vs=-g(\vx)$. This vector is a dual certificate of $\vs$, since
\[H(\vx)^{-1}\vs = -H(\vx)^{-1}g(\vx) \overset{\eqref{eq:gH-identities}}{=} \vx \in \Ssc.\]
Thus, every polynomial in the interior of the WSOS cone $\Sigma$ has a dual certificate.
\begin{definition}
When $-g(\vx) = \vs\, (\in\!\Sigma^\circ)$, we say that $\vx$ is the \emph{gradient certificate of $\vs$}.
\end{definition}
\revision{Simple calculus reveals the closed-form formula for the negative gradient: $-g(\vx) = \Lambda^*(\Lambda(\vx)^{-1})$; see also Lemma \ref{thm:f-properties} in the Appendix. However, since $\Lambda^*$ is in general not injective, the nonlinear system $\vs = \Lambda^*(\Lambda(\vx)^{-1})$ cannot be solved for $\vx$ in closed form; only the $\vx\to\vs$ map is easily computable, not the converse. The same mapping $-g$ has also been recently studied by Lasserre \cite{Lasserre2022} and others \cite{DeCastro-etal2021}. We shall elaborate more on this connection in Section \ref{sec:discussion}.
}

The following proposition gives two sufficient, although not necessary, conditions for $\vx \in \Sigma^*$ to certify a polynomial $\vt$. \revision{It also reveals that $\cC(\vs)$ and $\cP(\vx)$ are \emph{full-dimensional} cones, that is, they have a non-empty interior: every sufficiently small perturbation of the gradient certificate of $\vs$ certifies every sufficiently small perturbation of $\vs$.}
\begin{proposition}[\protect{\cite[Theorem 2.4 and Corollary 2.5]{DavisPapp2022}}]\label{thm:x-ycert}
Suppose that $\vx\in\Sigma^*$ and $\vs=-g(\vx)$. \begin{enumerate} 
\item Then $\vx$ is a dual certificate for every polynomial $\vt$ satisfying $\|\vt - \vs\|_{\vx}^* \leq 1$. 
\item If $\vy$ is a vector that satisfies the inequality $\|\vx - \vy\|_\vx < \frac{1}{2}$, then $\vy\in\Sigma^*$, and $\vx$ certifies $\vt=-g(\vy)$.
\end{enumerate}
\end{proposition}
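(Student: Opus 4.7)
The proof relies on three properties of the logarithmic barrier $f$ from Eq.~\eqref{eq:def:f}, each recorded in Lemma~\ref{thm:f-properties} of the Appendix: (i) the \emph{Dikin ellipsoid property}, $B_\vx(\vx, 1) \subseteq \Ssc$, which by closedness of $\Sigma^*$ extends to $\overline{B_\vx(\vx, 1)} \subseteq \Sigma^*$; (ii) the quantitative \emph{gradient-variation bound} $\|g(\vy) - g(\vx)\|_\vx^* \leq r/(1-r)$ whenever $r \defeq \|\vy - \vx\|_\vx < 1$; and (iii) the logarithmic-homogeneity identity $-H(\vx)^{-1}g(\vx) = \vx$, which was already invoked in the discussion of gradient certificates preceding the proposition.

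For part 1, the plan is to exhibit $H(\vx)^{-1}\vt$ as a point in the closed Dikin ellipsoid about $\vx$. Using (iii) together with $\vs=-g(\vx)$, one rewrites
\[ H(\vx)^{-1}\vt \;=\; H(\vx)^{-1}\vs + H(\vx)^{-1}(\vt - \vs) \;=\; \vx + H(\vx)^{-1}(\vt - \vs). \]
A direct computation from the definitions of the local and dual local norms gives the identity $\|H(\vx)^{-1}(\vt-\vs)\|_\vx = \|\vt-\vs\|_\vx^*$, so the hypothesis $\|\vt-\vs\|_\vx^* \leq 1$ places $H(\vx)^{-1}\vt$ in $\overline{B_\vx(\vx, 1)}$, which is contained in $\Sigma^*$ by (i). This is precisely the certification condition from Definition~\ref{def:dual-certificate}.

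For part 2, the containment $\vy \in \Sigma^*$ is immediate: $\|\vx-\vy\|_\vx < 1/2 < 1$ together with (i) gives $\vy \in \Ssc \subseteq \Sigma^*$. To show that $\vx$ certifies $\vt \defeq -g(\vy)$, I would reduce to part 1 by verifying $\|\vt-\vs\|_\vx^* = \|g(\vy) - g(\vx)\|_\vx^* \leq 1$. This is exactly where the self-concordance estimate (ii) is used: with $r < 1/2$, the bound gives $\|g(\vy) - g(\vx)\|_\vx^* \leq r/(1-r) < 1$, and part 1 applies.

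The only analytically nontrivial ingredient is the gradient-variation bound (ii), which is a standard consequence of the self-concordance of $f$ and should be included as one of the enumerated claims in Lemma~\ref{thm:f-properties}. Once that bound is granted, the rest of the argument is algebraic manipulation of the defining identities of the barrier together with the Dikin property, and the factor $1/2$ in part 2 is seen to arise from the elementary inequality $r/(1-r)\leq 1 \iff r\leq 1/2$.
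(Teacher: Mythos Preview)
Your argument is correct and matches the approach of the cited results in \cite{DavisPapp2022}: Part~1 follows by writing $H(\vx)^{-1}\vt = \vx + H(\vx)^{-1}(\vt-\vs)$ via \eqref{eq:gH-identities} and invoking the Dikin inclusion, and Part~2 reduces to Part~1 through the self-concordance gradient estimate $\|g(\vy)-g(\vx)\|_\vx^*\le \|\vy-\vx\|_\vx/(1-\|\vy-\vx\|_\vx)$. The present paper does not reproduce the proof, but your remark that this gradient-variation bound belongs among the claims of Lemma~\ref{thm:f-properties} is apt---it is exactly the estimate used in \cite{DavisPapp2022} (there attributed to \cite{PappYildiz2017corrigendum}).
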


\revision{Two very detailed examples illustrating the concept of dual certificates, the gradient certificate, and the construction of explicit WSOS representations from dual certificates can be found in our previous work \cite[Examples 2 and 3]{DavisPapp2022}.}

\subsubsection{Bit sizes of certificate vectors}
Recall that the bit size of an integer $y\in\mathbb{Z}$ is defined as $1+\lceil \log_2(|y|+1) \rceil$, and that the bit size of a vector $\vy\in\mathbb{Z}^n$ can be bounded from above by $n$ times the bit size of its the largest (in size) component. As we are interested in the orders of magnitude of bit sizes of dual certificates (e.g., whether they are linear or polynomial or exponential functions of parameters such as the degree or the number of variables of the certified polynomials), it will be convenient but equally informative to substitute this quantity with the simpler $\log(\|\vy\|_\infty)$.

\section{Rational certificates with boundable bit bize}\label{sec:gradbounds} 
The goal of this section is to bound the norm of an \emph{integer} dual certificate $\overline{\vy} \in \Sigma^*$ of a polynomial $\vt \in \Sc$. We consider different bounds, some of which depend only on the number of variables $n$, the degree $d$, and $\vt$, and others that are expressed in terms of other computable or interpretable parameters introduced later in this section.

The strategy to derive these bounds is as follows. In Section~\ref{sec:bounding-denominators}, we show that dual certificates suitably close to the gradient certificate can be rounded to nearby rational dual certificates with small denominators. Then, in Section~\ref{sec:bounding-norms}, we show that these certificates also have small norms. These two results add up to Theorem~\ref{thm:bitsize-grad} bounding the bit size of an integer dual certificate.



\subsection{Hessian bounds}\label{sec:bounding-denominators}
Recall from Proposition \ref{thm:x-ycert}, Statement 2, that if $\vx \in \Sigma^*$ and  $\|\vx - \vy\|_{\vx} < \frac{1}{2}$, then $\vx$ certifies $\vt = -g(\vy)$ to be WSOS. This certificate $\vx$ need not be a  rational vector, let alone a vector with small denominator. However, Lemma \ref{thm:rounding} below guarantees the existence of a nearby rational certificate $\vx_N$ for $\vt$ with boundable denominators. In this Lemma, and throughout the rest of the section, we shall continue using $g$ and $H$ to denote the gradient and Hessian of the function $f$ defined in \eqref{eq:def:f}.

\begin{lemma}\label{thm:rounding} Let $\vt \in \Sc$ be the coefficient vector of a polynomial whose gradient certificate is $\vy \in \Ssc$. Let $0<r_1<r_2$ be arbitrary, and suppose that $\vx \in \Sigma^*$ satisfies $\|\vx - \vy\|_{\vx} \leq r_1 < 1/2$. Let $U=\dim(\Sigma)$, and choose an integer denominator $N > 0$ to satisfy 
\begin{equation}\label{eq:roundingthm}
\|H(\vx)^{1/2}\| \leq \frac{2N}{\sqrt{U}} \left(\frac{r_2 - r_1}{1 + r_2}\right).
\end{equation}
Then every $\vx_N \in \frac{1}{N} \zz^U$ with $\|\vx_N - \vx\|_2 \leq \frac{\sqrt{U}}{2N}$ satisfies $\|\vx_N - \vy\|_{\vx_N} \leq r_2$. In particular, if $r_2<1/2$, then rounding $\vx$ componentwise to the nearest vector in $\frac{1}{N} \zz^U$ results in a rational dual certificate of $\vt$.  
\end{lemma}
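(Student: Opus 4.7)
The proof proceeds in three steps: first rounding, then passing from the Euclidean metric to the local norm at $\vx$, and finally transferring the local norm at $\vx$ to the local norm at $\vx_N$ via a self-concordance-type inequality.

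First I would verify the purely geometric rounding step: if each component of $\vx$ is replaced by its nearest element of $\frac{1}{N}\zz$, then the componentwise error is at most $\frac{1}{2N}$, so the resulting $\vx_N$ lies in $\frac{1}{N}\zz^U$ and satisfies $\|\vx_N-\vx\|_2 \le \frac{\sqrt{U}}{2N}$. Thus at least one such $\vx_N$ exists, establishing the hypothesis on $\|\vx_N-\vx\|_2$ is non-vacuous.

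Next I would convert the Euclidean bound into a local norm bound at $\vx$ using the definition $\|\vv\|_\vx=\|H(\vx)^{1/2}\vv\|$ and the operator norm inequality. Combining with the choice of $N$ from \eqref{eq:roundingthm} gives
\[
\|\vx_N-\vx\|_\vx \;\le\; \|H(\vx)^{1/2}\|\cdot\|\vx_N-\vx\|_2 \;\le\; \|H(\vx)^{1/2}\|\cdot\frac{\sqrt{U}}{2N} \;\le\; \frac{r_2-r_1}{1+r_2}.
\]
Since $r_1<r_2$ and $r_2<1$ (as will be needed in the final step), this quantity is strictly less than $1$, so $\vx_N$ lies in the unit $\|\cdot\|_\vx$-ball around $\vx$ and hence in $\Sigma^*$ (by Lemma~\ref{thm:f-properties}).

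The final and main step is to move from the local norm at $\vx$ to the local norm at $\vx_N$. The key tool, which I would invoke from the properties of the self-concordant barrier $f$ listed in Lemma~\ref{thm:f-properties} in the Appendix, is the Hessian-ratio inequality: whenever $\delta:=\|\vx_N-\vx\|_\vx<1$,
\[
H(\vx_N) \;\preceq\; \frac{1}{(1-\delta)^2}\,H(\vx),
\]
so $\|\vv\|_{\vx_N}\le\frac{1}{1-\delta}\|\vv\|_\vx$ for all $\vv$. Applying this to $\vv=\vx_N-\vy$, using the triangle inequality for $\|\cdot\|_\vx$, and using the hypothesis $\|\vx-\vy\|_\vx\le r_1$, yields
\[
\|\vx_N-\vy\|_{\vx_N} \;\le\; \frac{\|\vx_N-\vx\|_\vx+\|\vx-\vy\|_\vx}{1-\|\vx_N-\vx\|_\vx} \;\le\; \frac{\delta+r_1}{1-\delta}.
\]
The inequality $(\delta+r_1)/(1-\delta)\le r_2$ is equivalent to $\delta\le(r_2-r_1)/(1+r_2)$, which is exactly what step two supplies. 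This gives $\|\vx_N-\vy\|_{\vx_N}\le r_2$, the claim of the lemma.

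For the final ``in particular'' assertion, assuming $r_2<1/2$ gives $\|\vx_N-\vy\|_{\vx_N}<1/2$, and since $\vy$ is by hypothesis the gradient certificate of $\vt$, Proposition~\ref{thm:x-ycert}(2) applied with the roles of $\vx$ and $\vy$ shifted to $\vx_N$ and $\vy$ shows that $\vx_N$ certifies $\vt=-g(\vy)$. The main obstacle is really just the invocation of the correct self-concordance inequality; everything else is a careful chain of elementary norm estimates driven by the algebra $(\delta+r_1)/(1-\delta)\le r_2\iff \delta\le(r_2-r_1)/(1+r_2)$, which dictates the exact form of the bound \eqref{eq:roundingthm}.
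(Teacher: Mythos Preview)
Your proof is correct and follows essentially the same approach as the paper: both use the self-concordance inequality \eqref{eq:self-concordance} to pass from $\|\cdot\|_\vx$ to $\|\cdot\|_{\vx_N}$, combine it with the triangle inequality and the operator-norm bound $\|\vx_N-\vx\|_\vx\le\|H(\vx)^{1/2}\|\,\frac{\sqrt{U}}{2N}$, and then perform the algebra that reduces everything to $\delta\le(r_2-r_1)/(1+r_2)$. Your presentation isolates the equivalence $(\delta+r_1)/(1-\delta)\le r_2\iff\delta\le(r_2-r_1)/(1+r_2)$ a bit more explicitly than the paper (which instead factors the bound through $\frac{1}{1-\delta}\le\frac{1+r_2}{1+r_1}$), but the substance is identical.
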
 

\begin{proof} 
By self-concordance (inequality \eqref{eq:self-concordance} in Lemma \ref{thm:f-properties}), we have 
\begin{align*}
    \|\vx_N - \vy\|_{\vx_N} &\overset{\eqref{eq:self-concordance}}{\leq} \frac{\|\vx_N - \vy\|_{\vx}}{1-\|\vx_N - \vx\|_\vx} \\
    &\leq \frac{\|\vx_N - \vy\|_\vx}{1 - \frac{\sqrt{U}}{2N}\|H(\vx)^{1/2}\|}\\
    &\overset{\eqref{eq:roundingthm}}{\leq} \frac{1+r_2}{1+r_1}\|\vx_N - \vy\|_{\vx} \\
    &\leq  \frac{1+r_2}{1+r_1}\left( \|\vx_N - \vx\|_\vx + \|\vx - \vy\|_{\vx} \right)\\
    &\leq  \frac{1+r_2}{1+r_1}\left( \frac{\sqrt{U}}{2N}\|H(\vx)^{1/2}\| + r_1 \right) \\
    &\overset{\eqref{eq:roundingthm}}{\leq}  \deletethis{\frac{1+r_2}{1+r_1}\left( \frac{\sqrt{U}}{2N}\frac{2N}{\sqrt{U}}\left( \frac{r_2 - r_1}{1+r_2}\right) + r_1 \right) \\
    &=}r_2,
\end{align*}
which proves the first part of the claim. For the second part, if $\vx_N$ is the result of component-wise rounding $\vx$ to the nearest vector in $\frac{1}{N}\zz^U$, then $\|\vx_N - \vx\|_{\infty} \leq \frac{1}{2N}$, so $\|\vx_N - \vx\|_{2} \leq \frac{\sqrt{U}}{2N}$. Then the first part of the claim shows that $\vx_N$ is a certificate for $\vt$. 
\end{proof} 

In the corollary below, we consider the particular case in which the known certificate $\vx$ for $\vt$ is the gradient certificate (i.e., $\vx=\vy$).
\deletethis{ We can use Lemma \ref{thm:rounding} to construct a more explicit bound on the denominator size needed for a certificate obtained from rounding the gradient certificate of a polynomial, as made explicit in the next Corollary. Recall from \cite{DavisPapp2022} that every weighted sum-of-squares polynomial $\vt \in \Sigma^\circ$ has a gradient certificate $\vy$. However, the gradient certificate $\vy$ is not guaranteed to be rational. Lemma \ref{thm:rounding} below guarantees the existence of a rational certificate $\vy_N$ for $\vt$ with boundable denominators. }

\begin{corollary}\label{thm:round-grad} Let $\vy \in \Ssc$ be the gradient certificate for $\vt \in \Sc$. Suppose $N$ satisfies
\begin{equation*}
\frac{3\sqrt{U}\|H(\vy)^{1/2}\|}{2} \leq N,
\end{equation*}
and suppose that $\vy_N$ satisfies $\|\vy - \vy_N\|_2 \leq \frac{\sqrt{U}}{2N}$. Then $\vy_N$ certifies $\vt$.
%
\end{corollary}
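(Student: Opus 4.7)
The plan is to apply Lemma~\ref{thm:rounding} directly, specializing the known dual certificate $\vx$ to be the gradient certificate $\vy$ itself. With this choice, $\|\vx - \vy\|_\vx = 0$, so I would take $r_1 = 0$ in the lemma's hypothesis. Condition \eqref{eq:roundingthm} then reduces to
\[\|H(\vy)^{1/2}\| \leq \frac{2N}{\sqrt{U}}\cdot\frac{r_2}{1+r_2}.\]
Next, I would take $r_2 = 1/2$, the largest value consistent with the certification criterion in Proposition~\ref{thm:x-ycert}. This gives $\frac{r_2}{1+r_2} = \frac{1}{3}$, and a straightforward rearrangement recovers precisely the corollary's hypothesis $N \geq \frac{3\sqrt{U}\|H(\vy)^{1/2}\|}{2}$. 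The additional hypothesis $\|\vy - \vy_N\|_2 \leq \frac{\sqrt{U}}{2N}$ is exactly the rounding-distance condition of Lemma~\ref{thm:rounding}, so its conclusion yields $\|\vy_N - \vy\|_{\vy_N} \leq r_2 = 1/2$, and hence $\vy_N$ certifies $\vt$ via Proposition~\ref{thm:x-ycert}.

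The main (and only minor) obstacle is the boundary case $r_2 = 1/2$: Proposition~\ref{thm:x-ycert} requires the strict inequality $\|\vy_N - \vy\|_{\vy_N} < 1/2$, while the argument above yields only the non-strict version. I would resolve this either by applying Lemma~\ref{thm:rounding} with $r_2$ strictly less than $1/2$ (noting that any slack in the hypothesis on $N$ translates into the needed strict inequality), or by observing that the chain of inequalities inside the proof of Lemma~\ref{thm:rounding} is strict whenever $\vy_N \neq \vy$---and the degenerate case $\vy_N = \vy$ is trivial since $\vy$ already certifies $\vt$ by construction. Beyond this small technicality, no computation is needed beyond substituting the specialized parameters $r_1=0$, $r_2=1/2$, $\vx = \vy$ into Lemma~\ref{thm:rounding}.
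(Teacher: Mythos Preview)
Your proposal is correct and matches the paper's own proof essentially verbatim: the paper simply writes ``Substituting $r_1=0$ and $r_2 = 1/2$ into Lemma~\ref{thm:rounding} yields the claim.'' Your extra care about the boundary case $r_2=1/2$ (strict versus non-strict inequality in Proposition~\ref{thm:x-ycert}) is a detail the paper itself glosses over, and your proposed resolutions are sound.
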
 

\begin{proof} Substituting $r_1=0$ and $r_2 = 1/2$ into Lemma \ref{thm:rounding} yields the claim.
\end{proof} 

The denominators in Lemma \ref{thm:rounding} and Corollary \ref{thm:round-grad} depend on the norm $\|H(\vy)^{1/2}\|$, which is computable for a known $\vy$. If $\vy$ is not known explicitly, we can use the following upper bound for $\|H(\vy)^{1/2}\|$, which depends only on the norm of the coefficient vector $\vt$ of the polynomial $t(\cdot)$, the set $S_\vw$ defined in \eqref{eq:Swdef}, and the chosen basis $\vq$ of $\Sigma$. 

\begin{lemma}\label{thm:hess} 
 Let $\{\vz_1,\dots,\vz_s\} \subseteq S_\vw$ be a unisolvent set for $\Sigma$, with $s\geq U = \dim(\Sigma)$. Let $\vt \in \Sigma^\circ$, and let $\vy \in \Ssc$ satisfy $-g(\vy) = \vt$. Choose $\alpha_1,\dots,\alpha_s > 0$, and define the matrix $\mathbf{M}$ as \begin{equation}\label{eq:mdef}
\mathbf{M} \defeq  \sum_{i=1}^s\alpha_i\vq(\vz_i)\vq(\vz_i)^\T.
\end{equation}
Then $\vM$ is a positive definite matrix, and
\[ \|H(\vy)\|_2 \leq \cond(\vM) \|\vt\|_2^2,\]
where $\cond(\vM)= \frac{\lambda_{\max}(\vM)}{\lambda_{\min}(\vM)}$ is the condition number of $\vM$. 
\end{lemma}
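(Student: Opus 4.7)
My plan splits into two parts, corresponding to the two assertions in the lemma. For the positive definiteness of $\vM$, I would use the unisolvency assumption directly: any nonzero $\vv\in\R^U$ is the coefficient vector of a nonzero polynomial $v\in\cV$, and $\vv^\T\vM\vv=\sum_{i=1}^s\alpha_i v(\vz_i)^2$ is a sum of nonnegative terms (since each $\alpha_i>0$) that vanishes only if $v(\vz_i)=0$ for every $i$. Unisolvency then forces $v\equiv 0$, contradicting $\vv\neq\vzero$. Hence $\vM\succ 0$.

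For the spectral bound, the plan is to prove the stronger matrix inequality
\[ H(\vy) \preceq \frac{\|\vt\|_2^2}{\lambda_{\min}(\vM)}\,\vM, \]
from which, upon taking operator norms, $\|H(\vy)\|_2 \leq \cond(\vM)\|\vt\|_2^2$ follows immediately. Equivalently, I would show that for every $\vv\in\R^U$,
\[ \vv^\T H(\vy)\vv \leq \frac{\|\vt\|_2^2}{\lambda_{\min}(\vM)}\,\vv^\T\vM\vv. \]
Three ingredients enter the argument. First, the representation $\vv^\T H(\vy)\vv=\sum_{k=1}^m\|\Lambda_k(\vy)^{-1/2}\Lambda_k(\vv)\Lambda_k(\vy)^{-1/2}\|_F^2$, from differentiating $f$ twice. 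Second, the pointwise identity $t(\vz_i)=\sum_{k=1}^m w_k(\vz_i)\vp_k(\vz_i)^\T\Lambda_k(\vy)^{-1}\vp_k(\vz_i)\geq 0$, obtained by pairing $\vt=-g(\vy)=\Lambda^*(\Lambda(\vy)^{-1})$ with the evaluation vector $\vq(\vz_i)$ and invoking $\Lambda_k(\vq(\vz_i))=w_k(\vz_i)\vp_k(\vz_i)\vp_k(\vz_i)^\T$. Third, the spanning property $\operatorname{span}\{\vq(\vz_i)\}_{i=1}^s=\R^U$ provided by unisolvency, which lets me write any $\vv$ as $\sum_jc_j\vq(\vz_j)$ and re-express $\vv^\T H(\vy)\vv$ as a Hadamard-squared quadratic form whose diagonal entries are exactly $t(\vz_i)^2$. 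Combining these via Cauchy--Schwarz, the Schur product theorem, and the sampling inequality $\lambda_{\min}(\vM)\|\vv\|_2^2\leq\vv^\T\vM\vv$ should collapse the expression into the claimed form.

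The main obstacle I anticipate is the final algebraic combination: obtaining precisely the constant $\|\vt\|_2^2/\lambda_{\min}(\vM)$ without extraneous factors depending on individual weights $\alpha_i$, the block dimensions $L_k$, or Euclidean norms $\|\vq(\vz_i)\|$. A naive application of the trace bound $\lambda_{\max}(K\odot K)\leq\tr(K\odot K)=\sum_i t(\vz_i)^2$ yields a bound involving $\sum_i t(\vz_i)^2$ rather than the weighted $\sum_i\alpha_i t(\vz_i)^2=\vt^\T\vM\vt$, and closing this gap cleanly will likely require either a careful choice of the (non-unique) dual representation $\vv=\sum_jc_j\vq(\vz_j)$---most likely the minimum-norm solution, so that $\sum_jc_j^2$ is controlled by $\vv^\T\vM^{-1}\vv$---or an alternative route that bounds the generalized eigenvalue problem $\lambda_{\max}(\vM^{-1/2}H(\vy)\vM^{-1/2})$ directly, bypassing the pointwise representation altogether.
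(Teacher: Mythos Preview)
Your argument for the positive definiteness of $\vM$ is correct and matches the paper exactly.

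For the spectral bound, however, you are aiming at too strong a target. You propose to prove the matrix inequality $H(\vy) \preceq \frac{\|\vt\|_2^2}{\lambda_{\min}(\vM)}\,\vM$, and you rightly flag that you do not see how to get there without extraneous factors. The paper does \emph{not} prove (and does not need) this matrix inequality. The key simplification you are missing is that you never have to control $\vv^\T H(\vy)\vv$ for a general $\vv$; it suffices to control it for the evaluation vectors $\vv=\vq(\vz_i)$ only, because of the trace trick
\[
\|H(\vy)\|_2 \;\leq\; \tr(H(\vy)) \;=\; \langle H(\vy),\vI\rangle \;\leq\; \Big\langle H(\vy),\; \tfrac{\vM}{\lambda_{\min}(\vM)}\Big\rangle \;=\; \frac{1}{\lambda_{\min}(\vM)}\sum_{i=1}^s \alpha_i\,\vq(\vz_i)^\T H(\vy)\vq(\vz_i).
\]
For evaluation vectors the quadratic form is explicitly computable because $\Lambda_k(\vq(\vz))=w_k(\vz)\vp_k(\vz)\vp_k(\vz)^\T$ has rank one, yielding
\[
\vq(\vz)^\T H(\vy)\vq(\vz)=\sum_{k=1}^m\big(w_k(\vz)\vp_k(\vz)^\T\Lambda_k(\vy)^{-1}\vp_k(\vz)\big)^2 \;\leq\; \Big(\sum_{k=1}^m w_k(\vz)\vp_k(\vz)^\T\Lambda_k(\vy)^{-1}\vp_k(\vz)\Big)^2 = t(\vz)^2,
\]
where the inequality is just ``sum of squares $\leq$ square of sum'' for nonnegative summands (each $w_k(\vz)\geq 0$ since $\vz\in S_\vw$). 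Substituting back gives $\|H(\vy)\|_2\leq \frac{1}{\lambda_{\min}(\vM)}\sum_i\alpha_i t(\vz_i)^2 = \frac{\vt^\T\vM\vt}{\lambda_{\min}(\vM)}\leq \cond(\vM)\|\vt\|_2^2$, and the weights $\alpha_i$ enter exactly as needed, with no leftover constants.

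So your ingredients are right, but the assembly step should go through the trace of $H(\vy)$ against $\vM$ rather than through a Loewner comparison. The Hadamard/Schur machinery and the expansion $\vv=\sum_jc_j\vq(\vz_j)$ are unnecessary detours.
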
 
\begin{proof}
We begin by showing that $\vM$ is positive definite. Let $\vv$ be a unit-norm eigenvector of the smallest eigenvalue of $\vM$, and consider the polynomial $v(\cdot):=\vv^\T\vq(\cdot)$. Then 
\[ 
\lambda_{\min} \left(\mathbf{M} \right) = \vv^\T\Big(\sum_{i=1}^s\alpha_i\vq(\vz_i)\vq(\vz_i)^\T \Big)\vv = \sum_{i=1}^s \alpha_iv(\vz_i)^2. 
\]
Since $\{\vz_1,\dots,\vz_s\}$ is unisolvent and $\vv\neq\vzero$, it follows that $v$ is not the constant $0$ polynomial, moreover $v(\vz_i) \neq 0$ for at least one $i$. Hence $\sum_{i=1}^U v(\vz_i)^2 > 0$. As each $\alpha_i$ is positive, it follows that $\lambda_{\min}(\mathbf{M}) > 0$.


Now, we proceed with the proof of the claimed inequality.  Recall from Section \ref{sec:preliminaries} that $\Lambda_i(\vq(\cdot)) = w_i(\cdot)\vp(\cdot)\vp(\cdot)^\T$ for each $i = 1,...,m$. Thus, 
\begin{equation}\label{eq:t-hess-1}
\begin{split}
\sum_{i=1}^mw_i(\cdot)\vp_i(\cdot)^\T\Lambda_i(\vy)^{-1}\vp_i(\cdot) &= \sum_{i=1}^m\text{tr}(\Lambda_i(\vq(\cdot))\Lambda_i(\vy)^{-1})\\
&\overset{\eqref{eq:g}}{=} \vq(\cdot)^\T(-g(\vy)) = \vq(\cdot)^\T\vt = t(\cdot). 
\end{split}
\end{equation}
Thus, letting  $\Lambda(\cdot)$ represent $ \Lambda_1(\cdot) \oplus  \dots \oplus \Lambda_m(\cdot)$, we have for all $\vz \in S_\vw$, 
\begin{equation}\label{eq:hesseq1}
\begin{split}
\vq(\vz)^\T H(\vy)\vq(\vz) &\overset{\eqref{eq:H}}{=} \text{tr}(\Lambda(\vq(\vz))\Lambda(\vy)^{-1}\Lambda(\vq(\vz))\Lambda(\vy)^{-1}) \\
&=\sum_{i=1}^m\text{tr}\left(w_i(\vz)\vp_i(\vz)\vp_i(\vz)^\T\Lambda_i(\vy)^{-1}w_i(\vz)\vp_i(\vz)\vp_i(\vz)^\T\Lambda_i(\vy)^{-1}\right)\\
&=\sum_{i=1}^m  \text{tr}\left(w_i(\vz)\vp_i(\vz)^\T\Lambda_i(\vy)^{-1}\vp_i(\vz)w_i(\vz)\vp_i(\vz)^\T\Lambda_i(\vy)^{-1}\vp(\vz) \right) \\
&=\sum_{i=1}^m\left(w_i(\vz)\vp_i(\vz)^\T\Lambda_i(\vy)^{-1}\vp_i(\vz)\right)^2\ \\
&\overset{(*)}{\leq}\sum_{i=1}^m\sum_{j=1}^m\left(w_i(\vz)\vp_i(\vz)^\T\Lambda_i(\vy)^{-1}\vp_i(\vz)\right)\left(w_j(\vz)\vp_j(\vz)^\T\Lambda_j(\vy)^{-1}\vp_j(\vz)\right)\\
&\overset{\eqref{eq:t-hess-1}}{=} t(\vz)^2,
\end{split}
\end{equation}
with the inequality in $(*)$ due to the facts that $w_i(\vz) \geq 0$ whenever $\vz \in S_\vw$ and that $\Lambda_i(\vy)^{-1}$ is positive definite. 

Let $\{\vz_1,\dots,\vz_s\}$ be the unisolvent point set in the definition of $\mathbf{M}$ above. Then we have
\begin{equation*}
\begin{split}
\|H(\vy)\|_2 \leq \text{tr}(H(\vy)) = \langle H(\vy), \mathbf{I} \rangle &\leq \left\langle H(\vy), \frac{\mathbf{M}}{\lambda_{\min}(\mathbf{M})}  \right\rangle \\
&\overset{\eqref{eq:hesseq1}}{\leq} \frac{\sum_{i=1}^s \alpha_i t(\vz_i)^2}{\lambda_{\min}\left(\mathbf{M}\right)} \\
&= \frac{\vt^\T \mathbf{M}\vt}{\lambda_{\min}\left(\mathbf{M}\right)} \\
&\leq \frac{\|\vt\|_2^2\lambda_{\max}\left( \mathbf{M}\right)}{\lambda_{\min}\left(\mathbf{M}\right)} \\
&=\text{cond}(\mathbf{M})\|\vt\|_2^2. \qedhere
\end{split}
\end{equation*}
\end{proof} 

We can use the bound in Lemma \ref{thm:hess} to bound the denominators needed in Lemma \ref{thm:rounding} and Corollary \ref{thm:round-grad}. 

\begin{theorem}\label{thm:grad-denominators} Let $\vy$ be the gradient certificate for $\vt$. Let $U = \dim\left(\Sigma\right)$, and let $\mathbf{M}$ be defined as in \eqref{eq:mdef}.  Let 

\[
 N = \left\lceil\frac{3}{2}\sqrt{U\cond(\mathbf{M})}\|\vt\|_2\right\rceil.
\]
Then every $\vy_N \in \frac{1}{N}\zz^U$ with $\|\vy_N - \vy\|_{2} \leq \frac{\sqrt{U}}{2N}$ is a certificate for $\vt$.
\end{theorem}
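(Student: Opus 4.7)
The plan is to combine Corollary \ref{thm:round-grad} with Lemma \ref{thm:hess} in a direct substitution. Corollary \ref{thm:round-grad} already reduces the problem to verifying a single inequality on $N$, namely
\[
\frac{3\sqrt{U}\,\|H(\vy)^{1/2}\|_2}{2}\leq N,
\]
so the only remaining work is to replace the (generally unknown) spectral norm $\|H(\vy)^{1/2}\|_2$ by a quantity expressible in terms of the input data $\vt$ and the matrix $\vM$.

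First I would invoke Lemma \ref{thm:hess} to obtain
\[
\|H(\vy)\|_2 \leq \cond(\vM)\,\|\vt\|_2^{\,2}.
\]
Taking square roots on both sides and using the identity $\|H(\vy)^{1/2}\|_2 = \sqrt{\|H(\vy)\|_2}$ (valid because $H(\vy)$ is symmetric positive definite, so its spectral norm equals its largest eigenvalue, whose square root is the spectral norm of the principal square root), I would get
\[
\|H(\vy)^{1/2}\|_2 \leq \sqrt{\cond(\vM)}\,\|\vt\|_2.
\]
Multiplying through by $\tfrac{3\sqrt{U}}{2}$ then yields
\[
\frac{3\sqrt{U}\,\|H(\vy)^{1/2}\|_2}{2} \leq \frac{3}{2}\sqrt{U\cond(\vM)}\,\|\vt\|_2.
\]

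Next I would observe that the chosen
\[
N = \left\lceil \frac{3}{2}\sqrt{U\cond(\vM)}\,\|\vt\|_2 \right\rceil
\]
is by definition at least the right-hand side of the last inequality, and hence at least $\tfrac{3\sqrt{U}}{2}\|H(\vy)^{1/2}\|_2$. This is precisely the hypothesis required by Corollary \ref{thm:round-grad}. Since $\vy_N$ is assumed to satisfy $\|\vy_N - \vy\|_2 \leq \tfrac{\sqrt{U}}{2N}$, that corollary applies and concludes that $\vy_N$ is a dual certificate of $\vt$.

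There is no real obstacle here; this result is essentially a packaging of the two preceding statements, with the only minor point being the passage from $\|H(\vy)\|_2 \leq \cond(\vM)\|\vt\|_2^2$ to its square root form. The value of the theorem lies not in a new analytical idea but in producing a denominator bound that is \emph{computable} purely from the coefficient vector $\vt$, the dimension $U$, and the fixed matrix $\vM$ (which depends only on the basis $\vq$, the weights $\vw$, and any chosen unisolvent node set with positive weights $\alpha_i$) — none of which require knowledge of the gradient certificate $\vy$ itself.
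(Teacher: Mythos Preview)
Your proposal is correct and follows essentially the same approach as the paper: apply Lemma~\ref{thm:hess} to bound $\|H(\vy)^{1/2}\|$ by $\sqrt{\cond(\vM)}\,\|\vt\|_2$, verify that the chosen $N$ therefore satisfies the hypothesis of Corollary~\ref{thm:round-grad}, and conclude. Your explicit justification that $\|H(\vy)^{1/2}\|_2=\sqrt{\|H(\vy)\|_2}$ via positive definiteness is a small clarification the paper leaves implicit, but otherwise the arguments are identical.
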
 

\begin{proof} From Lemma \ref{thm:hess}, we have $\|H(\vy)^{1/2}\|_{2} \leq \sqrt{\text{cond}(\mathbf{M})} \|\vt\|_{2}$. Therefore, $N$ satisfies
\[
\frac{3\sqrt{U}}{2}\|H(\vy)^{1/2}\| \overset{\text{Lem.} \ref{thm:hess}}{\leq} \frac{3\sqrt{U}}{2} \sqrt{\text{cond}(\mathbf{M})} \|\vt\|_{2}
\leq N. 
\]
Then by Corollary \ref{thm:round-grad}, every $\vy_N \in \frac{1}{N}\zz^U$ with $\|\vy_N - \vx\|_{2} \leq \frac{\sqrt{U}}{2N}$ certifies $\vt$. 
\end{proof} 

Two remarks are in order. First, in order to obtain the smallest possible upper bound on the denominator $N$ that works, the goal should be to minimize the $\cond(\vM)$ with respect to the points $\vz_i$ in the definition of $\vM$---a likely impossible task in general. However, any unisolvent set from $S_\vw$ provides a bound, and that is generally a relatively straightforward task to find. Second, the value $\sqrt{U\cond(\vM)}$ is a property of the cone $\Sigma$, independent of $\vt$, therefore this optimization (or selection of suitable points $\vz_i$) needs to be performed only once for a given cone $\Sigma$. In Section \ref{sec:gradcerts-bases}, we shall use natural candidate points for interesting special cases, for which the $\cond(\vM)$ is computable in closed form.

\subsection{Bounding certificate norms}\label{sec:bounding-norms}
Now, we turn our attention to bounding the norms of rational certificates for a given polynomial. The results make use of two new parameters of the cone $\Sigma$ and its representation via the operator $\Lambda$. The first one is the \emph{barrier parameter} of the barrier function $f$ defined (in the notation of Proposition~\ref{thm:Nesterov}) as
\[ \nu \defeq \sum_{i=1}^m L_i \]
(see also Lemma~\ref{thm:f-properties}, Statement \ref{item:loghom}). The other is the constant $k_1^\ve$ defined in our next Lemma. This statement is analogous to Equation (3.8) in the proof of \cite[Theorem 3.5]{DavisPapp2022}, but is included here for completeness.

\begin{lemma}\label{thm:ynorm}
Let $\vy \in \left(\Sigma^*\right)^\circ$ be the gradient certificate for $\vt \in \Sigma^\circ$, and let $\ve \in \Sc$. Let $\varepsilon > 0$, and suppose that $\vt - \varepsilon\ve \in \bd(\Sigma)$. Define
\begin{equation}\label{eq:k1def}
k_1^\ve \defeq \min\left\{\ve^\T\vv \ | \ \vv \in \Sigma^*, \|\vv\|_{\infty} = 1\right\}. 
\end{equation}
Then 
\[
\|\vy\|_{\infty} \leq \frac{\nu}{k_1^\ve\varepsilon}.
\]
\end{lemma}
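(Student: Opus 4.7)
The plan is to chain three short inequalities. The crucial input is the Euler-type identity for the logarithmically homogeneous barrier $f$ (Lemma \ref{thm:f-properties}, Statement \ref{item:loghom}), which yields $\langle -g(\vy), \vy \rangle = \nu$. Since $\vy$ is the gradient certificate of $\vt$, this rewrites as $\vt^\T \vy = \nu$. This is the one place where $\nu$ enters, so it will appear naturally in the final bound.

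Next, I would exploit the hypothesis that $\vt - \varepsilon \ve$ lies on the boundary of $\Sigma$, hence in $\Sigma$. Combined with $\vy \in \Sigma^*$, the duality inequality gives $(\vt - \varepsilon \ve)^\T \vy \geq 0$, i.e., $\varepsilon \, \ve^\T \vy \leq \vt^\T \vy = \nu$.

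To translate the inner product $\ve^\T \vy$ into a bound on $\|\vy\|_\infty$, I would rescale and apply the definition of $k_1^\ve$. Observe that $\vy \neq \vzero$ (since $-g(\vy) = \vt$ and $\vt \in \Sigma^\circ$ is nonzero), so $\vy/\|\vy\|_\infty$ is a well-defined element of $\Sigma^*$ with unit $\ell^\infty$-norm. By definition of $k_1^\ve$ in \eqref{eq:k1def},
\[
\ve^\T\!\left(\frac{\vy}{\|\vy\|_\infty}\right) \geq k_1^\ve,
\]
which rearranges to $\ve^\T \vy \geq k_1^\ve \|\vy\|_\infty$. Chaining this with the previous inequality yields $k_1^\ve \|\vy\|_\infty \leq \nu/\varepsilon$, which is the claim.

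The only mildly subtle point is that the bound is meaningful, i.e., $k_1^\ve > 0$. This is not logically required to derive the inequality, but deserves a remark: the set $\{\vv \in \Sigma^* : \|\vv\|_\infty = 1\}$ is compact (it is the intersection of the closed cone $\Sigma^*$ with the compact $\ell^\infty$ unit sphere), and since $\ve \in \Sigma^\circ$, the function $\vv \mapsto \ve^\T \vv$ is continuous and strictly positive on this set. Hence the minimum is attained and is positive. I do not foresee any real obstacle; the proof is essentially a one-line duality argument dressed up with a rescaling.
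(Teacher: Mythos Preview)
Your proof is correct and follows essentially the same approach as the paper: both use the Euler identity $\langle -g(\vy),\vy\rangle = \nu$ (equivalently $\vt^\T\vy = \nu$), split $\vt = (\vt-\varepsilon\ve)+\varepsilon\ve$, invoke duality for the first term, and apply the definition of $k_1^\ve$ to the normalized vector $\vy/\|\vy\|_\infty$. The only cosmetic difference is that the paper normalizes $\vy$ at the outset and uses logarithmic homogeneity of $g$ to extract the factor $\|\vy\|_\infty$, whereas you work directly with $\vy$ and normalize only when invoking \eqref{eq:k1def}; the logic is identical.
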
 

\begin{proof}
Observe that the minimum in the definition \eqref{eq:k1def} exists (as $\Sigma^*$ is a closed and non-trivial cone) and $k_1^\ve > 0$ because $\ve \in \Sigma^\circ$. We now have
\begin{align*}
    \nu &\overset{\eqref{eq:gH-identities}}{=}\left  \langle -g\left(\frac{\vy}{\|\vy\|_\infty}\right), \frac{\vy}{\|\vy\|_\infty}\right \rangle \\
    &\overset{\eqref{eq:log-homogeneity}}{=} \|\vy\|_\infty \left\langle \vt, \frac{\vy}{\|\vy\|_\infty} \right\rangle \\
    &= \|\vy\|_\infty\left(\left\langle \vt - \varepsilon\ve, \frac{\vy}{\|\vy\|_\infty} \right\rangle + \varepsilon \left\langle \ve, \frac{\vy}{\|\vy\|_\infty}\right\rangle  \right) \\
    &\geq0 + \|\vy\|_\infty\varepsilon k_1^\ve,
\end{align*}
and the claimed upper bound on $\|\vy\|_\infty$ follows.
\end{proof}

\begin{remark}
Note that this bound can be computed efficiently using numerical methods for any given $\vt$ and chosen $\ve$. First, the $\varepsilon$ corresponding to $\vt$ and $\ve$ can be approximated (or bounded below) by simple line search; the bottleneck is testing membership in $\Sigma$ in each step. Second, although the minimization problem \eqref{eq:k1def} is not convex, its (global) optimal value can be computed by solving $2U$ efficiently solvable convex optimization problems, since
\begin{multline}\label{eq:k1e-convex}
k_1^\ve = \min_{1\leq i \leq U} \big(\inf \{ \ve^\T \vv \ | \ \vv \in \Sigma^*, \vv_{i} = 1, \|\vv\|_\infty \leq 1\}, \\\inf \{ \ve^\T \vv \ | \ \vv \in \Sigma^*, \vv_{i} = -1, \|\vv\|_\infty \leq 1\}\big).
\end{multline}
This reformulation will also allow us to bound $k_1^\ve$ from below using convex programming duality in the next section.
\end{remark}

Since we would like the tightest possible upper bound on $\|\vy\|_{\infty}$, we seek an $\ve \in \Sc$ which maximizes the quantity $k_1^\ve \varepsilon$. Lemma~\ref{thm:optimal-e} below illustrates that, for a fixed $\vt \in \Sigma^\circ$, choosing $\ve = \vt$ (with the resulting $\varepsilon=1$) is the optimal choice. Nevertheless, the above, more general, form of Lemma \ref{thm:ynorm} is useful when we are concerned with bounding certificates for families of polynomials (such as all univariate polynomials of degree $d$) in terms of interpretable parameters such as the number of variables or the degree of the polynomials. In this context, it is often more convenient to use the bound selecting any (convenient) $\ve$ in the quantity $k_1^\ve \varepsilon$, instead of $k_1^\vt$.


\begin{lemma} \label{thm:optimal-e} Let $\vy \in \Ssc$ be the gradient certificate for $\vt \in \Sigma^\circ$. Then for all $\ve \in \Sc$ and $\varepsilon > 0$ such that $\vt - \varepsilon\ve \in \bd(\Sigma)$, we have $k_1^\vt \geq k_1^\ve \varepsilon$.
\end{lemma}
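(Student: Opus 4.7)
The plan is to exploit the additive decomposition $\vt = (\vt - \varepsilon\ve) + \varepsilon\ve$ together with the duality pairing between $\Sigma$ and $\Sigma^*$. Specifically, take an arbitrary $\vv \in \Sigma^*$ with $\|\vv\|_\infty = 1$; we will show directly that $\vt^\T\vv \geq \varepsilon k_1^\ve$. Taking the minimum over all such $\vv$ then yields $k_1^\vt \geq \varepsilon k_1^\ve$ by the definition in \eqref{eq:k1def}.

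To establish the pointwise inequality, write
\[ \vt^\T \vv = (\vt - \varepsilon\ve)^\T \vv + \varepsilon\, \ve^\T \vv. \]
The first term is nonnegative, since by hypothesis $\vt - \varepsilon\ve \in \bd(\Sigma) \subseteq \Sigma$ and $\vv \in \Sigma^*$, so the defining property of the dual cone applies. The second term is bounded below by $\varepsilon k_1^\ve$, directly from the definition of $k_1^\ve$ in \eqref{eq:k1def} applied to this $\vv$. Combining these gives $\vt^\T\vv \geq \varepsilon k_1^\ve$, as required.

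There is essentially no obstacle here: the statement is a one-line consequence of conic duality once the right decomposition is written down. The only subtlety worth noting is that we do not actually use the hypothesis that $\vy$ is the gradient certificate of $\vt$, nor do we need $\vt\in\Sigma^\circ$ (only $\vt\in\Sigma$); the inequality $k_1^\vt \geq \varepsilon k_1^\ve$ is a purely geometric fact about how $k_1^{(\cdot)}$ interacts with the partial order induced by $\Sigma$. The hypotheses in the lemma simply reflect the intended use in Lemma \ref{thm:ynorm}, where $\varepsilon$ is the largest scalar for which $\vt - \varepsilon\ve$ remains in $\Sigma$.
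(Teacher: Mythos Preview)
Your proof is correct and considerably more direct than the paper's. The paper recasts the claim as $\max_{\ve} k_1^\ve\varepsilon \leq k_1^\vt$, parametrizes feasible $\ve$'s via the boundary of $\Sigma$ (writing $\ve=\vt-\vb$ with $\vb\in\bd(\Sigma)$), applies weak duality to swap the order of the outer $\max$ and inner $\min$, and then bounds the resulting inner maximum by zero using $\vb\in\Sigma$, $\vv\in\Sigma^*$. Your argument bypasses all of this: the decomposition $\vt=(\vt-\varepsilon\ve)+\varepsilon\ve$ and the dual-cone inequality give the pointwise bound $\vt^\T\vv\geq\varepsilon k_1^\ve$ immediately, so the minimum over $\vv$ follows in one line. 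The paper's route arguably makes the \emph{optimality} of $\ve=\vt$ more visible (it is literally framed as a maximization over $\ve$), but the weak-duality step is unnecessary for the inequality as stated, and your approach is both shorter and more transparent. Your observation that neither $\vy$ nor the interiority of $\vt$ is actually used is also accurate; the paper's proof likewise never invokes them.
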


\begin{proof}
Denote by $E$ the set 
\[
E = \{\ve \in \Sc \ | \ \text{there exists } \varepsilon > 0 \text{ such that } \vt - \varepsilon \ve \in \text{bd}(\Sigma) \},
\]
and let $\hat{E}$ be the set 
\[
\hat{E} = \{\ve \in \Sc \;|\;\vt -  \ve \in \text{bd}(\Sigma) \}
 = \{\vt - \vb \;|\;\vb \in \bd(\Sigma)\}.\] Let $V$ represent the set 
\[
V = \{ \vv \in \Sigma^* \ | \ \|\vv\|_{\infty} = 1 \},
\] and let $\varepsilon(\vt, \ve)$ be the largest $\varepsilon$ such that $\vt - \varepsilon \ve$ is on the boundary of $\Sigma$, for given $\vt$ and $\ve$. 
With this notation, we may now write
\begin{align*}
\max_{\ve \in \rr^U} k_1^\ve \varepsilon &= \max_{\ve \in E} \Big( \min_{\vv \in V} \varepsilon (\vt, \ve) ~ \ve^\T \vv \Big)\\
&= \max_{\ve \in \hat{E}} \Big( \min_{\vv \in V}  ~ \ve^\T \vv \Big) \\
&= \max_{\vb \in \text{bd}(\Sigma)} \Big( \min_{\vv \in V}  ~ (\vt - \vb)^\T \vv \Big) \\
&\leq \min_{\vv \in V} \Big(\max_{\vb \in \text{bd}(\Sigma)} \vt^\T\vv - \vb^\T\vv \Big)\\
&= \min_{\vv \in V} \Big(\vt^\T\vv + \max_{\vb \in \bd(\Sigma)} (-\vb^\T\vv)\Big)\\
&\leq \min_{\vv \in V} \vt^\T\vv \\
&= k_1^\vt, 
\end{align*}
where the first inequality comes from the weak duality theorem of convex optimization, and the second one follows from $\vv\in\Sigma^*$ and $\vb\in\Sigma$. 
\end{proof}



Having bounded the norm of the gradient certificate in Lemma \ref{thm:ynorm}, we can now bound the norm of a rounded gradient certificate.

\begin{lemma} \label{thm:roundedynorm}
Let $\vy \in \left(\Sigma^*\right)^\circ$ be the gradient certificate for a polynomial with coefficient vector $\vt \in \Sigma^\circ$.  Let $U = \dim\left(\Sigma\right)$, $\ve \in \Sc$, and $\varepsilon > 0$, and suppose $\vt - \varepsilon\ve\in\bd(\Sigma)$. Denote by $\nu$ the barrier parameter for $\Lambda$ and $\mathbf{M}$ the matrix defined as in \eqref{eq:mdef}. Let $N =  \left\lceil\frac{3}{2}\sqrt{U\cond(\mathbf{M})}\|\vt\|_{2}\right\rceil$, and suppose $\vy_N \in \rr^U$ satisfies $\|\vy_N - \vy\|_{\infty} \leq \frac{1}{2N}$. Then $\vy_N$ is a certificate for $\vt$ with
\[
\|\vy_N\|_\infty \leq \frac{1}{3 \sqrt{U\cond(\mathbf{M})}\|\vt\|_{2}} + \frac{\nu}{k_1^\ve\varepsilon}.
\]
\end{lemma}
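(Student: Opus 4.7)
The proof is essentially a bookkeeping combination of Theorem \ref{thm:grad-denominators} and Lemma \ref{thm:ynorm}, so I expect no real obstacle: the entire argument is a triangle inequality followed by a substitution of the bounds already proved.

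The plan is as follows. First I would establish that $\vy_N$ is indeed a dual certificate of $\vt$. The hypothesis $\|\vy_N - \vy\|_\infty \leq \frac{1}{2N}$ gives, via the standard norm comparison,
\[
\|\vy_N - \vy\|_2 \;\leq\; \sqrt{U}\,\|\vy_N - \vy\|_\infty \;\leq\; \frac{\sqrt{U}}{2N}.
\]
Since $N = \lceil \tfrac{3}{2}\sqrt{U\cond(\vM)}\|\vt\|_2\rceil$ is exactly the denominator allowed in Theorem \ref{thm:grad-denominators}, that theorem applies directly and certifies $\vt$.

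Next I would bound $\|\vy_N\|_\infty$. By the triangle inequality in $\|\cdot\|_\infty$,
\[
\|\vy_N\|_\infty \;\leq\; \|\vy_N - \vy\|_\infty + \|\vy\|_\infty \;\leq\; \frac{1}{2N} + \|\vy\|_\infty.
\]
The first term is controlled using the lower bound on $N$: since $N \geq \tfrac{3}{2}\sqrt{U\cond(\vM)}\|\vt\|_2$,
\[
\frac{1}{2N} \;\leq\; \frac{1}{3\sqrt{U\cond(\vM)}\|\vt\|_2}.
\]
The second term is exactly what Lemma \ref{thm:ynorm} bounds, with the chosen $\ve\in\Sigma^\circ$ and $\varepsilon>0$ satisfying $\vt - \varepsilon\ve \in \bd(\Sigma)$:
\[
\|\vy\|_\infty \;\leq\; \frac{\nu}{k_1^\ve\,\varepsilon}.
\]
Adding these two inequalities yields the claimed upper bound on $\|\vy_N\|_\infty$.

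Since every step is an invocation of a previous result combined with the triangle inequality and a trivial norm comparison, the only thing to be careful about is consistency of the norms (the rounding hypothesis is stated in the $\|\cdot\|_\infty$ norm, whereas Theorem \ref{thm:grad-denominators} requires a bound in the $\|\cdot\|_2$ norm); this is handled by the factor $\sqrt{U}$ and is the only place where the structure of the argument matters.
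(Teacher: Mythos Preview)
Your proposal is correct and follows essentially the same approach as the paper: invoke Theorem~\ref{thm:grad-denominators} (after converting the $\|\cdot\|_\infty$ hypothesis to a $\|\cdot\|_2$ bound) to show $\vy_N$ certifies $\vt$, then combine the triangle inequality with Lemma~\ref{thm:ynorm} and the lower bound on $N$ to obtain the norm estimate. The only difference is that you make the $\ell_\infty$-to-$\ell_2$ conversion explicit, whereas the paper glosses over it.
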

\begin{proof} The fact that $\vy_N$ is a certificate for $\vt$ has already been shown in Lemma  \ref{thm:grad-denominators}. 

Recall from Lemma \ref{thm:ynorm} that $\|\vy\|_\infty \leq \frac{\nu}{k_1^\ve\varepsilon}$. Moreover, by our choice of $\vy_N$ and $N$, we know  
\begin{equation}\label{eq:ynlemma1}
    \|\vy_N - \vy\|_\infty \leq \frac{1}{2N} \leq
    \frac{1}{3 \sqrt{U\text{cond}(\mathbf{M})}\|\vt\|_{2}}.
\end{equation}
Therefore, by the triangle inequality, we have
\begin{equation*}
\|\vy_N\|_\infty \leq \|\vy_N - \vy\|_\infty + \|\vy\|_\infty \overset{\eqref{eq:ynlemma1}, \text{ Lem. } \ref{thm:ynorm}}{\leq} \frac{1}{3 \sqrt{U\text{cond}(\mathbf{M})}\|\vt\|_{2}} + \frac{\nu}{k_1^\ve\varepsilon}. \qedhere
\end{equation*}
\end{proof} 

\subsection{Bounds on integer certificate norms}
Compiling the results from this section, we are now prepared to state a result bounding the largest magnitude component of an integer certificate for a polynomial $\vt \in \Sigma^\circ$. 

\begin{theorem}\label{thm:bitsize-grad} Let $U = \dim\left(\Sigma\right)$, and let $\mathbf{M}$ be defined as in \eqref{eq:mdef}. Let $\ve \in \Sc$, and let $k_1^\ve$ be defined as in \eqref{eq:k1def}. Let $\nu$ be the barrier parameter of $\Lambda$, and let $\vt \in \Sigma^\circ$ with $\vt - \varepsilon\ve$ on the boundary of $\Sigma$. Then there exists an integer certificate $\overline{\vy}$ for $\vt$ with
\deletethis{
\begin{align*}
\|\overline{\vy}\|_\infty &\leq \frac{1}{2} +  \left\lceil\frac{3}{2}U\sqrt{\text{cond}(\mathbf{M})}\|\vt\|_{\infty}\right\rceil\left(\frac{\nu}{k_1^\ve\varepsilon}\right) \\
&\leq \mathcal{C}\left(\frac{U\sqrt{\text{cond}(\mathbf{M})}\|\vt\|_{\infty}\nu}{k_1^\ve\varepsilon} \right).
\end{align*}}
\begin{align*}
\|\overline{\vy}\|_\infty &\leq \frac{1}{2} +  \left\lceil\frac{3}{2}\sqrt{U\cond(\mathbf{M})}\|\vt\|_{2}\right\rceil\left(\frac{\nu}{k_1^\ve\varepsilon}\right). 
\end{align*}
\end{theorem}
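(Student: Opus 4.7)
The statement compiles the two main threads of the section---small-denominator rounding (Theorem~\ref{thm:grad-denominators}) and the a priori bound on the gradient certificate (Lemma~\ref{thm:ynorm})---into a bound on a single integer vector, so my plan is simply to assemble those results and then rescale. Let $\vy \in \Ssc$ denote the (unique) gradient certificate of $\vt$, whose existence follows from the bijection property of $-g$ on $\Sigma^\circ$ recalled in the discussion preceding Definition~\ref{def:dual-certificate} via Lemma~\ref{thm:f-properties}. Set $N \defeq \lceil \tfrac{3}{2}\sqrt{U\cond(\mathbf{M})}\|\vt\|_2 \rceil$ and let $\vy_N \in \frac{1}{N}\zz^U$ be the componentwise rounding of $\vy$ to the nearest lattice point. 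Then $\|\vy_N - \vy\|_\infty \leq \frac{1}{2N}$, whence $\|\vy_N - \vy\|_2 \leq \frac{\sqrt{U}}{2N}$, and Theorem~\ref{thm:grad-denominators} guarantees that $\vy_N$ is a dual certificate of $\vt$.

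The candidate integer certificate is then $\overline{\vy} \defeq N \vy_N \in \zz^U$. To see that this rescaling preserves the certificate property \emph{for $\vt$} (as opposed to merely for $N\vt$), I would invoke the logarithmic homogeneity of the barrier (Lemma~\ref{thm:f-properties}), which yields $H(\lambda \vx)^{-1} = \lambda^2 H(\vx)^{-1}$ for every $\lambda>0$; since $\Sigma^*$ is a cone, $H(\vx)^{-1}\vt \in \Sigma^*$ implies $H(\lambda\vx)^{-1}\vt \in \Sigma^*$ as well. Applied with $\lambda = N$ and $\vx = \vy_N$, this shows that $\overline{\vy}$ certifies $\vt$.

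It then remains to bound $\|\overline{\vy}\|_\infty = N \|\vy_N\|_\infty$. Combining the triangle inequality with Lemma~\ref{thm:ynorm},
\begin{equation*}
\|\vy_N\|_\infty \leq \|\vy\|_\infty + \|\vy_N - \vy\|_\infty \leq \frac{\nu}{k_1^\ve \varepsilon} + \frac{1}{2N},
\end{equation*}
and multiplying by $N$ reproduces the claimed bound exactly. I do not anticipate any genuine obstacle here; all the substantive work was carried out in Theorem~\ref{thm:grad-denominators} (to choose a safe denominator $N$) and in Lemma~\ref{thm:ynorm} (to control the gradient certificate itself). The only subtleties worth flagging are the $\ell_\infty$--$\ell_2$ book-keeping when invoking the rounding bound (an $\ell_\infty$-error of at most $\frac{1}{2N}$ yields an $\ell_2$-error of at most $\frac{\sqrt{U}}{2N}$, exactly the hypothesis needed), and the small log-homogeneity check confirming that scaling by the positive integer $N$ preserves the certificate property for $\vt$.
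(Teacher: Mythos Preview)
Your proof is correct and follows essentially the same route as the paper: round the gradient certificate $\vy$ to $\vy_N\in\frac{1}{N}\zz^U$ with $N=\big\lceil\tfrac{3}{2}\sqrt{U\cond(\vM)}\|\vt\|_2\big\rceil$ (invoking Theorem~\ref{thm:grad-denominators}), rescale to the integer vector $\overline{\vy}=N\vy_N$, and bound $\|\overline{\vy}\|_\infty$ via the triangle inequality together with Lemma~\ref{thm:ynorm}. Your final arithmetic, keeping the rounding error as $\tfrac{1}{2N}$ and multiplying by $N$ to obtain exactly $\tfrac{1}{2}$, is in fact slightly cleaner than the paper's version, which passes through the looser bound of Lemma~\ref{thm:roundedynorm} before multiplying by $N$.
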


\begin{proof} \deletethis{ Let $\vt \in \Sigma$, and let $\vy$ be the gradient certificate for the  $\vt$. Let $\vy_N$ be the result of rounding $\vy$ componentwise to the nearest vector in $\frac{1}{2N}\zz^U$,  with  
\begin{equation}\label{eq:bitsize-grad-1}
N = \left\lceil\frac{3}{2}\sqrt{U\text{cond}(\mathbf{M})}\|\vt\|_{2}\right\rceil. 
\end{equation}From Corollary \ref{thm:grad-denominators}, we know that $\vy_N$  is a certificate for $\vt$. 
Since $\vy$ is rounded componentwise to $\vy_N$, we also have that $\|\vy_N - \vy\|_{\infty} \leq \frac{1}{2N}$. Thus, from Lemma \ref{thm:roundedynorm}, we have \begin{equation}\label{eq:bitsize-grad-2}
\|\vy_N\|_\infty \leq \frac{1}{3 \sqrt{U\text{cond}(\mathbf{M})}\|\vt\|_{2}} + \frac{\nu}{k_1^\ve\varepsilon}.
\end{equation}
}
Recall from Sec. \ref{sec:dual-certs} that any positive multiple of $\vy_N$ is also a certificate for $\vt$. Hence, the integer vector $\overline{\vy} \defeq N\vy_N$ is a certificate for $\vt$. Using Lemma~\ref{thm:roundedynorm} and Theorem~\ref{thm:grad-denominators}, we have
\begin{align*}
\|\overline{\vy}\|_{\infty} = N \|\vy_N\|_{\infty} &\leq \left\lceil\frac{3}{2}\sqrt{U\text{cond}(\mathbf{M})}\|\vt\|_{2}\right\rceil\left(\frac{1}{3 \sqrt{U\text{cond}(\mathbf{M})}\|\vt\|_{2}} + \frac{\nu}{k_1^\ve\varepsilon}\right) \\
&=\frac{1}{2} +  \left\lceil\frac{3}{2}\sqrt{U\cond(\mathbf{M})}\|\vt\|_{2}\right\rceil\left(\frac{\nu}{k_1^\ve\varepsilon}\right). \qedhere
\end{align*}
\end{proof}

\section{Bit size bounds for rational certificates in particular bases}\label{sec:gradcerts-bases}

In this section we refine the result of Theorem \ref{thm:bitsize-grad} in a few well-studied and computationally relevant special cases such as the cones of univariate polynomials nonnegative on the real line or on a bounded interval. These results complement existing ones on the bit sizes of conventional sum-of-squares certificates of nonnegative univariate polynomials, such as those summarized in the Introduction. We emphasize that our approach yields an efficiently computable bound for a variety of WSOS cones even in the multivariate case. We consider one of these in Section \ref{sec:multi-interp}. We also consider different choices of bases, which are relevant for practical computation, specifically polynomials represented in the Chebyshev basis and polynomials represented as interpolants.

The results presented in this section come in two flavors, motivated by two different mindsets and two different families of applications in which nonnegativity certification is important. From the perspective of \emph{optimization}, the fundamental task is to certify a bound as close to the global minimum of a polynomial (on $S_\vw$) as possible, and therefore one is inherently concerned with certifying polynomials close to the boundary of $\Sigma$. In this setting, $\varepsilon$ (specifically with the choice of $\ve=\vone$) is arguably the most important parameter in Theorem \ref{thm:bitsize-grad}, even for a fixed number of unknowns $n$ and fixed degree $d$, and one of the most pertinent questions is the dependence of the bit size of the certificates on $\varepsilon$, as $\varepsilon$ tends to $0$. The common simplifying assumption that the coefficient vector $\vt$ is integer is not particularly convenient or necessary; $\vt$ can be any rational vector. On the other hand, from the perspective of \emph{theoretical computer science} and applications such as automated theorem proving, the fundamental task is to certify that a given polynomial is nonnegative on a given $S_\vw$. Although the dependence of the complexity of the certificate on $\varepsilon$ (with any $\ve$) is still informative, the primary concern is the asymptotic complexity of the certificate as the input size increases. It is convenient to assume that $\vt$ is an integer vector, and the relevant question is the bit size of the certificates as a function of $n$, $d$, and $\tau=\log(\|\vt\|_\infty)$. Therefore, we consider both bounds involving $\varepsilon$ (for general $\vt$) and bounds that are strictly functions of $(n,d,\tau)$, assuming that $\vt$ is an integer vector.

In each case, we will use the bound given in Theorem \ref{thm:bitsize-grad} as a starting point. Since $U$ and $\nu$ can be expressed in terms of $d$ and $n$, a result  depending only on $d$, $n$, and $\tau$, and possibly $\varepsilon$, requires a lower bound on $k_1^\ve$ or $k_1^\ve \varepsilon$ (for some $\ve$) and an upper bound on $\text{cond}(\mathbf{M})$, for some $\mathbf{M}$ in the form given in \eqref{eq:mdef}, in each case. 

\deletethis{
\begin{table}[]
\hspace*{-1.5in}
\begin{tabular}{|c|c|c|c|}
\hline
Basis, Domain & Bound on $\log(\|\vy\|_{\infty})$  & $\varepsilon$-Free Bound  on $\log(\|\vy\|_{\infty})$  & Section \\ \hline
Univ. Monomial, $\rr$      &  $\mathcal{O}\left(d + \log(\|\vt\|_2) + \log(1/\varepsilon) \right)$ & $ \mathcal{O}\left(\tau d^{c} \right)$ & Sec. \ref{sec:univ-monom}        \\ \hline
Univ. Chebyshev, $[-1,1]$   &  $\mathcal{O}\left( \log(d) + \log(\|\vt\|_{2}) + \log(1/\varepsilon)\right)$ & $\mathcal{O}\left(d\tau + d^2\right)$   &   Sec. \ref{sec:univ-cheb}      \\ \hline
Univ. Monomial, $[-1,1]$     & {\color{red} $\mathcal{O}\left(d  + \log(\|\vt\|_{2}) + \log(1/\varepsilon)\right)$}  & {\color{red} $ \mathcal{O}\left(d\tau + d\log(d)\right)$}   &   Sec. \ref{sec:univ-monom-interval}      \\ \hline
Univ. Interpolant, $[-1,1]$   & $\mathcal{O}\left(\log(d) + \log(\|\vt\|_2) + \log(\mu) + \log(1/\varepsilon)\right)$  & $\mathcal{O} \left( \log(\mu) + d^2 + d\tau_L + d\tau \right) $    &   Sec. \ref{sec:univ-interp}      \\ \hline
Mult. Interpolant, $\Delta_n$ &$\mathcal{O} \left( \log(U) + \log(\|\vt\|_2) + \log(\mu) + \log\left(\nu \right) + \log(1/\varepsilon) \right)$ &  &  \ref{sec:multi-interp}       \\ \hline
\end{tabular}
\end{table}
}

\deletethis{
\david{ I GUESS THIS WILL GO AFTER ALL
For those bounds that depend only on the degrees of the polynomials and the bit size of the coefficients, we will repeatedly make use of the following Lemma, adapted from \cite{BasuLeroyRoy2009}.
\begin{lemma}[\cite{BasuLeroyRoy2009}; Thm.~1.2, adapted]
\label{thm:eps-univ-con}
Let $t$ be a univariate polynomial of degree $d$. Then 
\begin{enumerate} 
    \item Suppose that the coefficients of $t$ in the monomial basis are integers of bit size no more than $\tau$. If $t$ takes only positive values on the real line, then we have \[ \min_{z\in \rr} t(z) >
\frac{ 3^{d/2} }{ 2^{(2d-1)\tau} (d+1)^{2d-1/2} }. \]
    \item Suppose that the coefficients of $t$ in the basis of Chebyshev polynomials of the first kind are integers of bit size no more than $\tau$. If $t$ takes only positive values on the interval $[-1,1]$, then we have
    \[ \min_{z\in[-1,1]} t(z) >
\frac{ 3^{d/2} }{ 2^{(2d-1)(2d + \tau)} (d+1)^{2d-1/2} }. \]
\end{enumerate} 
\end{lemma}

\begin{proof} \phantom{}
\begin{enumerate} 
\item This is Theorem 1.2 from \cite{BasuLeroyRoy2009}, which provides a similar lower bound on the minimum of a positive polynomial on the interval $[0,1]$, adapted to positive polynomials and their minima on the real line. 
\item A degree-$d$ polynomial with integer coefficients of bit size at most $\tau$ in the Chebyshev basis also has integer coefficients in the monomial basis, and the bit size of the largest magnitude coefficient in the monomial basis is no more than $2d+\tau$. Our claim follows again from Theorem~1.2 of \cite{BasuLeroyRoy2009}. \qedhere
\end{enumerate} 
\end{proof} 
}
}
\subsection{Univariate polynomials over the real line}
We first consider the most well-studied special case, univariate polynomials nonnegative over the real line, which coincide with univariate SOS polynomials, represented in the monomial basis. In this case, $\Lambda(\vx)$ is the standard moment matrix (positive definite Hankel matrix) corresponding to the pseudo-moment vector $\vx\in\Sc$.

\begin{theorem}\label{thm:monomial-bit-eps-condensed}
Suppose that $\Sigma=\Sigma_{1,2d}$ (univariate sum-of-squares polynomials of degree $2d$) and that we represent all polynomials in the monomial basis---that is, in the notation of Proposition \ref{thm:Nesterov}, the ordered bases $\vp$ and $\vq$ are the standard monomial bases of degree $d$ and $2d$, respectively. Then the following hold:
\begin{enumerate} 
    \item For $\ve = \Lambda^*(\mathbf{I})$, the coefficient vector of the sum of monomial squares polynomial $z \mapsto 1 + z^2 + \dots + z^{2d}$, we have $k_1^\ve \geq 1$. 
    \item There exists a matrix $\mathbf{M}$ of the form given in \eqref{eq:mdef} such that $\cond(\vM) \leq 3.21^{2d+1}/2$.
    \item For every $\vt \in \Sigma^\circ$ with $\vt - \varepsilon \ve \in \bd(\Sigma)$, there exists an integer dual certificate $\overline{\vy} \in \Sigma^\circ \cap \zz^{2d+1}$ for $\vt$ with
\begin{equation*}
\|\overline{\vy}\|_{\infty}
\leq \frac{1}{2} + 4d^{3/2}3.21^{d+1/2}\frac{\lceil \|\vt\|_2\rceil }{\varepsilon},
\end{equation*}
whose largest component has bit size bounded as
\begin{align*}
    \log(\|\overline{\vy}\|_{\infty}) \approx \mathcal{O}\left(d + \log(\|\vt\|_2) + \log(1/\varepsilon) \right).
\end{align*}

\end{enumerate} 
\end{theorem}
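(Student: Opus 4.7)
The plan is to prove the three claims in sequence and combine them via Theorem~\ref{thm:bitsize-grad}, substituting the dimensional constants $U=2d+1$ and $\nu=d+1$ that are specific to this cone and basis.

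For Claim~1, I first compute $\ve$ explicitly. Since $\Lambda(\vq(\cdot))=\vp(\cdot)\vp(\cdot)^\T$ with $\vp(z)=(1,z,\dots,z^d)^\T$, the adjoint $\Lambda^*(\vI)$ is the vector of coefficients of $\tr(\vp\vp^\T)=1+z^2+\cdots+z^{2d}$; that is, $\ve$ has a $1$ in each even-indexed position and $0$ in each odd-indexed position, so $\ve^\T\vv=\sum_{i=0}^d v_{2i}=\tr(\Lambda(\vv))$ for every $\vv\in\Sigma^*$. To bound $k_1^\ve$ from below, I fix $\vv\in\Sigma^*$ with $\|\vv\|_\infty=1$ and consider the index $k$ achieving this maximum magnitude. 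If $k=2i$ is even, then $v_{2i}$ is a diagonal entry of the PSD Hankel matrix $\Lambda(\vv)$, so $v_{2i}\geq 0$ and hence $v_{2i}=1$, giving $\tr(\Lambda(\vv))\geq 1$. If $k=2j+1$ is odd, then the principal $2\times 2$ minor $\bigl(\begin{smallmatrix}v_{2j} & v_{2j+1}\\ v_{2j+1} & v_{2j+2}\end{smallmatrix}\bigr)$ of $\Lambda(\vv)$ is PSD, so $v_{2j}v_{2j+2}\geq v_{2j+1}^2=1$; AM--GM then gives $v_{2j}+v_{2j+2}\geq 2$, and again $\tr(\Lambda(\vv))\geq 1$. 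Either way $\ve^\T\vv\geq 1$, hence $k_1^\ve\geq 1$.

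For Claim~2, I will construct $\vM$ explicitly by picking a unisolvent set of $2d+1$ nodes from $S_\vw=\R$---the natural candidates being Chebyshev-type nodes on a bounded interval---together with suitably chosen positive weights $\alpha_k$. With this choice, $\vM=V\operatorname{diag}(\alpha)V^\T$ for the Vandermonde matrix $V_{k,i}=z_k^i$, so $\cond(\vM)$ is controlled by $\cond(V)^2$ (up to the weights). Exponential bounds on $\cond(V)$ for Chebyshev-type node distributions are classical, and the base $3.21$ will come from a careful specific choice of nodes and weights together with sharp lower and upper eigenvalue estimates for the resulting Vandermonde Gram matrix. The main obstacle of the whole proof lies here: obtaining \emph{some} exponential-in-$d$ bound is routine, but pinning down the precise base $3.21$ and the $1/2$ prefactor in $3.21^{2d+1}/2$ requires committing to an explicit construction and executing the eigenvalue estimates tightly.

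For Claim~3, I substitute $k_1^\ve\geq 1$, $\cond(\vM)\leq 3.21^{2d+1}/2$, $U=2d+1$, and $\nu=d+1$ into Theorem~\ref{thm:bitsize-grad}, yielding
\[
\|\overline{\vy}\|_\infty \;\leq\; \tfrac{1}{2} + \Bigl\lceil \tfrac{3}{2}\sqrt{(2d+1)\cdot 3.21^{2d+1}/2}\;\|\vt\|_2\Bigr\rceil\cdot\tfrac{d+1}{\varepsilon}.
\]
Pulling the factor $3.21^{d+1/2}$ outside the square root and bounding $\tfrac{3}{2}\sqrt{(2d+1)/2}\,(d+1)$ by $4d^{3/2}$ for $d\geq 1$ (absorbing the ceiling by adding $1$ to $\|\vt\|_2$) gives the closed-form inequality in the theorem. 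Taking logarithms and using $\log(3.21^{d+1/2})=\mathcal{O}(d)$ and $\log(d^{3/2})=\mathcal{O}(\log d)=o(d)$ produces the stated $\mathcal{O}(d+\log\|\vt\|_2+\log(1/\varepsilon))$ asymptotic bit-size bound. Claims~1 and~3 are essentially straightforward linear algebra and arithmetic once Claim~2 is in hand.
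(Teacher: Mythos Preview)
Your arguments for Claims~1 and~3 are correct and essentially match the paper's. (For Claim~1 the paper uses the slightly slicker observation that the largest-magnitude entry of a PSD matrix lies on the diagonal, which makes your odd-index case vacuous; but your $2\times 2$-minor argument is valid and equivalent.)

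The genuine gap is Claim~2. You propose to construct $\vM$ explicitly from Chebyshev-type nodes and then bound $\cond(\vM)$ via $\cond(V)^2$, while conceding that extracting the precise base $3.21$ and the prefactor $1/2$ is the ``main obstacle.'' This is not how the paper proceeds, and your route is unlikely to reach those constants without essentially reproving a known theorem. The paper's argument is non-constructive and much shorter: observe that, in the monomial basis, every matrix of the form~\eqref{eq:mdef} is a positive definite Hankel matrix of order $2d+1$, and conversely every positive definite Hankel matrix of that order can be written in this form. The converse is the key step you are missing: a positive definite Hankel matrix is the truncated moment matrix of some Borel measure on $\R$, and by Gaussian quadrature that measure can be replaced by a discrete measure supported on at most $2d+1$ points, which is exactly the form~\eqref{eq:mdef}. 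Once the admissible class of $\vM$ is identified with \emph{all} positive definite $(2d+1)\times(2d+1)$ Hankel matrices, one simply invokes Beckermann's bound \cite[Thm.~3.6]{Beckermann2000} on the minimum condition number over that class, which is where the $3.21^{2d+1}/2$ comes from. No explicit nodes or weights are ever chosen.

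So rather than attempting a direct eigenvalue estimate on a specific Vandermonde Gram matrix, you should (i) argue the equivalence between the form~\eqref{eq:mdef} and positive definite Hankel matrices via the Hamburger moment problem and Gaussian quadrature, and then (ii) cite Beckermann's result.
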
 

\begin{proof} \phantom{of the opera} 
\begin{enumerate} 
\item 
First, observe that since $\mathbf{I}$ is positive definite, $\ve \in \Sigma^\circ$. Now, we have 
\[
\ve^\T\vv = \langle \Lambda^*(\mathbf{I}), \vv \rangle = \langle \mathbf{I}, \Lambda(\vv) \rangle = \text{tr}(\Lambda(\vv)), 
\]
Combining this equation with \eqref{eq:k1def}, we get
\begin{equation}\label{eq:k1e-mono}
k_1^\ve = \min\{\tr(\Lambda(\vv)) \ | \ \Lambda(\vv) \succcurlyeq \vzero, \|\vv\|_{\infty} = 1\}. 
\end{equation}
It is well known that in this setting (nonnegative univariate polynomials represented in the monomial basis), $\Lambda(\cdot)$ maps a vector $\vv$ to its corresponding Hankel matrix, whose diagonal consists of components of $\vv$. (See, e.g., \cite[Example 1]{DavisPapp2022}.) Because the matrix $\Lambda(\vv)$ is positive semidefinite, its largest element (in absolute value) is a nonnegative component of $\vv$ on the diagonal of $\Lambda(\vv)$. Hence, in \eqref{eq:k1e-mono} we have  $\text{tr}(\Lambda(\vv)) \geq \|\vv\|_{\infty} = 1$. Therefore $k_1^\ve \geq 1$. 

\item When $\vq(z)=(1,z,\dots,z^{2d})$, the matrix $\vM$ defined in \eqref{eq:mdef} is a positive semidefinite Hankel matrix of order $(2d+1)$. It is well known that a partial converse also holds, and every positive definite Hankel matrix of order $(2d+1)$ can be written in this form, because positive definite matrices can be identified with truncated moment matrices of Borel measures $\mu$ supported on the real line (see, e.g., \cite[Theorem 3.146]{BlekhermanParriloThomas2013}), and via Gaussian quadrature \cite[Chap.~7, Thm. 3]{Krylov1959} this measure $\mu$ can also be chosen to be a discrete one supported on at most $2d+1$ points.

Bounds on the condition numbers of positive definite Hankel matrices have been studied by many authors. Our Statement 2 follows from a bound of Beckermann \cite[Thm. 3.6]{Beckermann2000}, which states that the positive definite $n\times n$ Hankel matrix of the lowest condition number has condition number at most $3.21^n/2$.

\deletethis{Let $\mathbf{M}$ be the positive definite Hankel matrix of dimension $(2d+1) \times (2d+1)$ with lowest possible condition number; we claim that $\mathbf{M}$ has the requisite form. Observe that $\mathbf{M}$ is the truncated moment matrix of some measure $\mu$ supported on some interval $[a,b]$ (which is either finite or infinite). Using the Gaussian quadrature rule, we have 
\[
\int_a^b x^kd\mu = \sum_{i=1}^{2d+1}\alpha_iz_i^k
\] 
for some given weights $\alpha_i \geq 0$ for each $i$ and points $z_1,\dots,z_{2d+1}$ in $[a,b]$, for all $k \leq 2(2d+1)-1 = 4d + 1$ (See \cite{Krylov1959}, Chapter 7, Theorem 3). 
Thus the truncated moment vector corresponding to $\mu$ may be written as $\left(\sum_{i=1}^{2d+1}\alpha_i, \sum_{i=1}^{2d+1}\alpha_iz_i,\dots,\sum_{i=1}^{2d+1} \alpha_iz_i^{4d}\right)$. 
Therefore, the matrix $\mathbf{M}$ can be written as $\mathbf{M} = \sum_{i=1}^{2d+1} \alpha_i \mathbf{q}(z_i) \mathbf{q}(z_i)^\T$. From Theorem 3.6 from \cite{Beckermann2000}, we know that the best conditioned Hankel matrix of dimension $(2d+1) \times (2d+1)$ for $d > 0$ has condition number at most $3.21^{2d+1}/2$. Statement 2 follows. 
}

\item From Statement 1, we have $k_1^\ve \geq 1$. From Statement 2, we know there exists a matrix $\mathbf{M}$ in the form of Eq. \eqref{eq:mdef} with cond$(\mathbf{M}) \leq 3.21^{2d+1}/2$
Moreover, we have $\nu = d+1$ and  $U = 2d+1$. Substituting these values into the formula from Theorem \ref{thm:bitsize-grad} gives the result. \qedhere
\end{enumerate}
\end{proof} 

\deletethis{
Suppose $\Sigma$ is represented by the univariate monomial basis. Our first goal is to provide an upper bound on cond$(\mathbf{M})$ for some matrix $\mathbf{M}$ in the form given in \eqref{eq:mdef}. The first lemma below shows that the best-possible conditioned Hankel matrix has correct form. 

\begin{lemma}\label{thm:cond-monom} Suppose $\vq$ is represented using the standard monomial basis, so $\vq(z) = (1,z,z^2,\dots,z^{2d})$. Let $\mathbf{M}$ be the positive definite Hankel matrix with lowest possible condition number of dimension $(2d+1) \times (2d+1)$. Then $\mathbf{M} = \sum_{i=1}^{2d+1} \alpha_i \mathbf{q}(z_i) \mathbf{q}(z_i)^\T$, for some points $z_1,\dots,z_{2d+1}\in \rr^{2d+1}$ and positive real numbers $\alpha_1,\dots,\alpha_{2d+1} \in \rr$. \end{lemma} 

\begin{proof} Let $\mathbf{M}$ be the lowest-conditioned (positive definite) Hankel matrix of dimension $(2d+1) \times (2d+1)$. This matrix $\mathbf{M}$ is the truncated moment matrix of some measure $\mu$ supported on some interval $[a,b]$ (which is either finite or infinite). Using the Gaussian quadrature rule, we have 
\[
\int_a^b x^kd\mu = \sum_{i=1}^{2d+1}\alpha_iz_i^k
\] 
for some given weights $\alpha_i \geq 0$ for each $i$ and points $z_1,\dots,z_{2d+1}$ in $[a,b]$, for all $k \leq 2(2d+1)-1 = 4d + 1$ (See \cite{Krylov1959}, Chapter 7, Theorem 3). 
Thus the truncated moment vector corresponding to $\mu$ may be written as $\left(\sum_{i=1}^{2d+1}\alpha_i, \sum_{i=1}^{2d+1}\alpha_iz_i,\dots,\sum_{i=1}^{2d+1} \alpha_iz_i^{4d}\right)$. 
Therefore, the matrix $\mathbf{M}$ can be written as $\mathbf{M} = \sum_{i=1}^{2d+1} \alpha_i \mathbf{q}(z_i) \mathbf{q}(z_i)^\T$.
\end{proof} 

The next proposition, due to Beckermann, gives a bound on the condition number of the best-conditioned Hankel matrix of a given dimension. 

\begin{proposition} \label{thm:beck-hankel-cond}
(See Theorem 3.6 from  \cite{Beckermann2000}) The best conditioned positive definite Hankel Matrix of dimensions
$(2d+1) \times (2d+1)$ for $d > 0$  has a condition number at most $\approx 3.21^{2d+1}/2$. 
\end{proposition} 

Putting Lemma \ref{thm:cond-monom} and Proposition \ref{thm:beck-hankel-cond} together, we give a bound on some $\text{cond}(\mathbf{M})$ in this case. 

\begin{corollary}\label{thm:cond-M-monomial-explic} Let $\mathbf{M}$ be the positive definite Hankel matrix with lowest possible condition number of dimension $(2d + 1) \times (2d + 1)$, with $d > 0$. Let $\Sigma$ be represented by the monomial basis, so $\vq(z) = (1,z,\dots,z^{2d})$. Let $\vt \in \Sigma^\circ$, and let $\vy \in \Ssc$ be the gradient certificate of $\vt$. Then 
\[
\|H(\vy)\| \leq \text{cond}(\mathbf{M})\|\vt\|_2^2. 
\]
\[
{\color{red} alt: } \|H(\vy)\| \leq \frac{3.21^{2d+1}}{2}\|\vt\|_2^2. 
\]
\end{corollary} 

\begin{proof} From Lemma \ref{thm:cond-monom}, we know that $\mathbf{M}$ can be written as $\mathbf{M} = \sum_{i=1}^{2d+1}\alpha_i \mathbf{q}(z_i)\mathbf{q}(z_i)^\T$ for some $z_1,\dots,z_{2d+1} \in \rr^{2d+1}$ and $\alpha_1,\dots,\alpha_{2d+1} \geq 0$. Then by Lemma \ref{thm:hess}, we have $\|H(\vy)\| \leq \text{cond}(\mathbf{M})\|\vt\|_2^2$. By Proposition \ref{thm:beck-hankel-cond}, $\text{cond}(\mathbf{M}) \leq 3.21^{2d+1}/2$, from which we conclude that $\|H(\vy)\| \leq 3.21^{2d+1}/2\|\vt\|_2^2$. \end{proof}

Let $\mathbf{I}$ denote the identity matrix. The following result bounds $k_1^\ve$ from below, for $\ve = \Lambda^*(\mathbf{I})$, in the univariate monomial basis. 
\begin{lemma}\label{thm:k1-monomial} Let $\vq(z) = (1,z,z^2,\dots,z^{2d})$, and suppose $\vp(z) = (1,z,z^2,\dots,z^{d})$. Let $\ve = \Lambda^*(\mathbf{I})$, wherein $\mathbf{I}$ denotes the identity matrix. Then $k_1^\ve \geq 1$. 
\end{lemma} 

\begin{proof} Let $\vv \in \Sigma^*$.  We have 
\[
\ve^\T\vv = \langle \Lambda^*(\mathbf{I}), \vv \rangle = \langle \mathbf{I}, \Lambda(\vv) \rangle = \text{tr}(\Lambda(\vv)). 
\]

Hence 
\[
k_1^\ve = \min\{\text{tr}(\Lambda(\vv) \ | \ \vv \in \Sigma^*, \|\vv\|_{\infty} = 1\}. 
\]

Recall that in this setting, $\Lambda(\cdot)$ maps a vector $\vv$ to its corresponding Hankel matrix. For any $\vv \in \Sigma^*$, the matrix $\Lambda(\vv)$ is positive semidefinite, which in turn guarantees that the largest element (in absolute value) of $\vv$ is on the diagonal of $\Lambda(\vv)$. Since  $\Lambda(\vv)$ is positive definite, this largest element must be nonnegative. Hence, for any $\vv \in \Sigma^*$, we have  $\text{tr}(\Lambda(\vv)) \geq \|\vv\|_{\infty} = 1$. Therefore $k_1^\ve \geq 1$. 
\end{proof}

Finally, we are prepared to give a bound on the norm of a certificate vector in the univariate monomial basis. Our first result depends on $\varepsilon$, which is defined by the largest real number $e$  such that $\vt - e\vone \in \Sigma$. Later, we will provide a lower bound on $\varepsilon$, and use this bound to provide an $\varepsilon$-free result on the bound of a norm of a certificate vector.

\begin{lemma}\label{thm:monomial-bit-eps}
Suppose that $n = 1$ and $\text{deg}(t) = 2d$, and suppose that we use the monomial basis to represent all polynomials. Suppose that $S_\vw = \rr$. For every $\vt \in \Sigma^\circ$ with $\vt - \varepsilon \vone$ on the boundary of $\Sigma$, with $\vy \in \Ssc$ the gradient certificate for $\vt$, there exists an integer certificate $\overline{\vy} \in \Sigma^\circ \cap \zz^{2d+1}$ for $\vt$ with 
\[
\|\overline{\vy}\|_{\infty} \leq \mathcal{C}\left( (2d+1)^{3/2} \frac{\sqrt{3.21^{2d+1}}\|\vt\|_{2}}{\sqrt{2}\varepsilon} 
\right) 
\]
\[
= \mathcal{O}\left( d^{3/2} \frac{3.21^{d+1/2}\|\vt\|_{2}}{\varepsilon} 
\right) 
\]

and taking log: 

\begin{align*}
    \log(\|\overline{\vy}\|_{\infty}) \approx \mathcal{O}\left( \log(d) + d + \log(\|\vt\|_2) - \log(\varepsilon) \right)
\end{align*}
\end{lemma} 

\begin{proof} From Lemma \ref{thm:k1-monomial}, we have $k_1^\vone \geq 1$. Moreover, we have $\nu = U = 2d+1$, and from Lemma \ref{thm:cond-M-monomial-explic} we know there exists a matrix $\mathbf{M}$ in the form of Eq. \eqref{eq:mdef} with cond$(\mathbf{M}) \leq 3.21^{2d+1}/2$. Substituting these values into the formula from Theorem \ref{thm:bitsize-grad} gives the result. 
\end{proof} 
}
 
\deletethis{
\begin{corollary} (To Theorem \ref{thm:monomial-bit-eps-condensed})
Suppose that $\Sigma=\Sigma_{1,2d}$ and that, in the notation of Proposition \ref{thm:Nesterov}, the ordered bases $\vp$ and $\vq$ are the standard monomial bases of degree $d$ and $2d$, respectively. Let $\vt \in \Sc$ and $\varepsilon > 0$ with $\vt - \varepsilon \Lambda^*(\mathbf{I}) \in \bd(\Sigma)$, and assume that $\vt$ is an integer vector with $\tau = \log(\|\vt\|_{\infty})$. Then there exists an integer dual certificate $\overline{\vy} \in \Ssc \cap \zz^{2d+1}$ for $\vt$ with
\[
\|\overline{\vy}\|_{\infty}
\leq \frac{1}{2} + 4d^{3/2}3.21^{d+1/2}\left\lceil 2^\tau \sqrt{2d+1} \right\rceil 2^{5\tau (2d)^c}
\]
with some $c\geq 1$. Thus, the bit size of the largest component of the integer certificate can be bounded by a linear function of $\tau$ and and a polynomial of $d$:
\begin{align*}
    \log\left( \|\overline{\vy}\|_{\infty}\right) &\approx \mathcal{O}\left(\tau d^{c} \right).
\end{align*}
\end{corollary}
\begin{proof}
The claim is a consequence of Theorem \ref{thm:monomial-bit-eps-condensed} with a suitable positive lower bound on $\varepsilon$, which can be obtained using quantifier elimination, similarly to, e.g., \cite[Prop.~3.1]{MagronSafeyElDin2018}. Note that since $\ve=(1,0,1,\dots,0,1)$, the expression $t(x) - \varepsilon e(x)$ is a bivariate polynomial in $(x,\varepsilon)$ with integer coefficients, and if $\tau$ represents the bit size of the coefficients of $t$ in the monomial basis, then the coefficients of $t-\varepsilon e$ are of bit size at most $\tau+1$.

Thus, from quantifier elimination (e.g., \cite[Theorem.~14.16]{BasuPollackRoy2006}), the statement $\forall x,  t(x) - \varepsilon e(x) \geq 0$ is equivalent to a quantifier-free formula $\Phi(\varepsilon)$ involving only univariate polynomials (of $\varepsilon$), the coefficients of which are integers of bit size at most $\hat\tau := (\tau+1) (2d)^c$ for some absolute constant $c>0$.

We can now bound $\varepsilon$ from below as the smallest positive root among all the roots of the polynomials that make up the formula $\Phi(\varepsilon)$. Cauchy's bound 
yields that the bit size of $1/\varepsilon$ is at most $1+2\hat\tau\leq 5 \tau (2d)^c$, so $\varepsilon > 2^{-5\tau (2d)^c}$. Substituting this bound and $\|\vt\|_2 \leq \|\vt\|_\infty \sqrt{2d+1} = 2^\tau \sqrt{2d+1}$ into the bound from Theorem \ref{thm:monomial-bit-eps-condensed} completes the proof.
\end{proof}
} 
\revision{
\begin{corollary} (To Theorem \ref{thm:monomial-bit-eps-condensed}) \label{cor:univariate-bit-ndtau}
	Suppose that $\Sigma=\Sigma_{1,2d}$ and that, in the notation of Proposition \ref{thm:Nesterov}, the ordered bases $\vp$ and $\vq$ are the standard monomial bases of degree $d$ and $2d$, respectively. Let $\vt \in \Sc$, and assume that $\vt$ is an integer vector with $\tau = \log(\|\vt\|_{\infty})$. Then there exists an integer dual certificate $\overline{\vy} \in \Ssc \cap \zz^{2d+1}$ for $\vt$ whose components have bit size 
 	\begin{align*}
		\log\left( \|\overline{\vy}\|_{\infty}\right) &\approx \mathcal{O}\left(\tau d + d\log d\right).
	\end{align*}
\end{corollary}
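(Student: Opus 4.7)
The plan is to combine Theorem~\ref{thm:monomial-bit-eps-condensed} with a lower bound on $\varepsilon$ that depends only on $d$ and $\tau$. The theorem yields $\log\|\overline{\vy}\|_\infty = \mathcal{O}(d + \log\|\vt\|_2 + \log(1/\varepsilon))$, and since $\vt\in\zz^{2d+1}$ with $\|\vt\|_\infty \leq 2^\tau$ we have $\log\|\vt\|_2 = \mathcal{O}(\tau + \log d)$. Thus it suffices to show $\log(1/\varepsilon) = \mathcal{O}(d\tau + d\log d)$, where $\varepsilon = \min_{z\in\R} t(z)/e(z)$ with $e(z) := 1 + z^2 + \cdots + z^{2d}$. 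Since $\vt\in\Sigma^\circ$, $t$ is strictly positive on $\R$ and has degree exactly $2d$ with integer leading coefficient $t_{2d}\geq 1$; consequently $t(z)/e(z) \to t_{2d}\geq 1$ as $|z|\to\infty$, and the minimum is attained at some $z^*\in\R$.

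To localize $z^*$, I apply Cauchy's root bound to the auxiliary polynomial $q(z) := 2t(z) - t_{2d}\,e(z)$. Its leading coefficient is $t_{2d}\geq 1$, and its remaining coefficients have magnitude at most $3 \cdot 2^\tau$, so every root of $q$ lies in $[-R, R]$ with $R := 1 + 3\cdot 2^\tau = 2^{\mathcal{O}(\tau)}$. Outside this interval $q > 0$, i.e., $t(z)/e(z) > t_{2d}/2 \geq 1/2$. Hence either $\varepsilon \geq 1/2$ outright, or $|z^*| \leq R$, in which case the elementary estimate $e(z) \leq (d+1)\max(1, z^{2d})$ gives $e(z^*) \leq (d+1) R^{2d} = 2^{\mathcal{O}(d\tau + \log d)}$.

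To lower-bound the numerator, I appeal to a classical inequality for the minimum of a strictly positive integer-coefficient polynomial: an adaptation of \cite[Thm.~1.2]{BasuLeroyRoy2009} to the real line yields $\min_{z\in\R} t(z) \geq 2^{-\mathcal{O}(d\tau + d\log d)}$ when $\deg t = 2d$ and $\|\vt\|_\infty \leq 2^\tau$. Combining with the bound on $e(z^*)$,
\[
\varepsilon \;=\; \frac{t(z^*)}{e(z^*)} \;\geq\; \frac{\min_z t(z)}{e(z^*)} \;\geq\; 2^{-\mathcal{O}(d\tau + d\log d)},
\]
and the same conclusion holds trivially in the other case $\varepsilon \geq 1/2$. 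Plugging these estimates into Theorem~\ref{thm:monomial-bit-eps-condensed} delivers $\log \|\overline{\vy}\|_\infty = \mathcal{O}(d\tau + d\log d)$, as claimed.

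The main non-routine ingredient is the localization step, which is needed because the naive bound $e(z^*) \leq (d+1)(1+|z^*|^{2d})$ has no a priori control on $|z^*|$. The auxiliary polynomial $q$ provides exactly the control needed; without it one is forced into a dependence on $d$ that is super-polynomial, as would be obtained via general-purpose tools such as quantifier elimination applied to the formula $\forall z,\,t(z) - \varepsilon e(z) \geq 0$.
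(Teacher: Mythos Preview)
Your proof is correct and reaches the same bound, but by a genuinely different route than the paper's. The paper argues directly that the optimal $\varepsilon$ is a root of the discriminant $\operatorname{disc}_x(t(x)-\varepsilon e(x))$, bounds the bit size of the coefficients of this univariate polynomial in $\varepsilon$ via subresultant estimates \cite[Prop.~8.46]{BasuPollackRoy2006}, and then applies Cauchy's root bound to the reciprocal polynomial to get $\log(1/\varepsilon)=\mathcal{O}(d\tau+d\log d)$. You instead factor $\varepsilon=t(z^*)/e(z^*)$, control $e(z^*)$ via the auxiliary polynomial $q=2t-t_{2d}e$ (a neat localization device), and control $t(z^*)$ by invoking a black-box lower bound on $\min_{\R} t$.

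Two small comments. First, the ``adaptation of \cite[Thm.~1.2]{BasuLeroyRoy2009} to the real line'' that you cite for $\min_{\R} t \ge 2^{-\mathcal{O}(d\tau+d\log d)}$ is not entirely routine: a naive reduction to a bounded interval via $z\mapsto Rz$ inflates $\tau$ to $\mathcal{O}(d\tau)$ and yields only $\mathcal{O}(d^2\tau)$. The sharp bound you want is most cleanly obtained by the very discriminant argument the paper uses (applied to $t-m$ instead of $t-\varepsilon e$), so your approach does not really avoid that machinery---it just packages it as a citation. Second, your closing remark that quantifier elimination would force a super-polynomial dependence on $d$ is a bit strong: for this one-quantifier formula, QE (and in particular the discriminant method the paper employs) stays polynomial; what the discriminant route buys over generic QE is the explicit exponent, not a complexity-class jump.
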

\begin{proof}
%

The claim is a consequence of Theorem \ref{thm:monomial-bit-eps-condensed} with a suitable upper bound on $1/\varepsilon$ as a function of $\tau$ and $d$. (Here, as before, $\varepsilon > 0$ with $\vt - \varepsilon \ve \in \bd(\Sigma)$, with $\ve = \Lambda^*(\mathbf{I})$ is the coefficient vector of the polynomial $z\mapsto 1+z^2+\cdots+z^{2d}$.)

For the bound, notice that the largest $\varepsilon$ for which $t(x)-\varepsilon e(x) \geq 0$ for every $x$ has the property that the corresponding univariate polynomial $x \mapsto t(x)-\varepsilon e(x)$ has a multiple root, since its global minimum is zero. Therefore, the discriminant of this polynomial (with respect to $x$, treating $\varepsilon$ as a parameter) must vanish at the optimal $\varepsilon$. The discriminant is a univariate polynomial of $\varepsilon$, with integer coefficients whose bit sizes can be bounded from above using the (more general) bounds on the bit sizes of subresultant polynomials \cite[Proposition 8.46]{BasuPollackRoy2006}. Note that since $\ve=(1,0,1,\dots,0,1)$, if $\tau$ is an upper bound on the bit size of the coefficients of $t$ in the monomial basis, then the coefficients of the bivariate polynomial $t(x)-\varepsilon e(x)$ are of bit size at most $\tau+1$. Furthermore, treating the polynomial $t(x)-\varepsilon e(x)$ as a polynomial of $x$ whose coefficients are polynomials of $\varepsilon$, our polynomial is of degree $2d$, with coefficients of degree $1$. Thus, \cite[Proposition 8.46]{BasuPollackRoy2006} yields that the coefficients of the discriminant have bit sizes no larger than $\hat\tau := (4d-1)\big((\tau+1) + \log(2d) + \log(4d)\big) + \log(4d-1) = \Oh(\tau d + d\log(d))$. 

	We can now bound $1/\varepsilon$ from above using Cauchy's bound, 
	which yields that the bit size of $1/\varepsilon$ is at most $1+2\hat\tau = \Oh(\tau d + d\log(d))$. Substituting this bound and $\log(\|\vt\|_2) \leq \log(\|\vt\|_\infty \sqrt{2d+1}) = \Oh(\tau + \log(d))$ into the bound from Theorem \ref{thm:monomial-bit-eps-condensed} completes the proof.
\end{proof}
}

\revision{
These results may be compared to the bit sizes of the certificates obtained using the two algorithms analyzed in \cite{MagronSafeyElDinSchweighofer2019}. The first one finds certificates (explicit SOS decompositions) of bit size $\Oh(\tau \left(\frac{d}{2}\right)^{\frac{3d}{2}})$---linear in $\tau$, but exponential in the degree. The second one outputs a decomposition with coefficients of bit size $\Oh(\tau d^2 + d^3)$---also polynomial in the degree, a comparable result to ours above.
}

\subsection{Univariate polynomials over an interval}
We now consider polynomials nonnegative over an interval, which for simplicity we assume to be $[-1,1]$; all results in this section can be scaled appropriately to apply to any bounded interval. In this case, because the domain is bounded, the constant one polynomial $\vone$ belongs to $\Sc$, thus we may use $\ve=\vone$ instead of $\ve=\Lambda^*(\vI)$ as we do in the previous section. This leads to simpler and more interpretable results: $\varepsilon(\vt,\ve)$ in this case is simply the minimum value of $t$ over $[-1,1]$, which in turn can be bounded tightly using elementary techniques, without quantifier elimination. In the context of polynomial optimization, this reveals the rate at which the bit sizes of the certificates of lower bounds grow as the lower bounds approach the minimum value.

In this section, we also consider polynomial bases that are more commonly used in practical computation with high-degree polynomials than the monomial basis, namely Chebyshev polynomials (of the first kind) \cite[Sec.~3]{Trefethen2013} and interpolants \cite[Sec.~2]{Trefethen2013}.

The representations of even and odd degree polynomials over $[-1,1]$ vary slightly; we briefly recall the details for completeness. In the notation of Proposition \ref{thm:Nesterov}, for polynomials of degree $2d$, we use the weight polynomials $\vw(z)=\{1,1-z^2\}$ to represent $S_\vw=[-1,1]$, and regardless of the choice of bases $\vp_1, \vp_2$ and $\vq$, we have $m=2$, $U\defeq\dim(\Sigma^\vw_{1,2d})=2d+1$ and $\nu \defeq \sum_{i=1}^m \dim(\vp_i) = 2d+1$. For polynomials of degree $2d+1$, we use the weight polynomials $\vw(z)=\{1-z,1+z\}$, and we have $m=2$, $U\defeq\dim(\Sigma^\vw_{1,2d+1})=2d+2$ and $\nu = \sum_{i=1}^m \dim(\vp_i) = 2d+2$.


\subsubsection{Chebyshev polynomial basis}\label{sec:univ-cheb}

In this setting, we let $\vq$ be the basis of Chebyshev polynomials up to degree $2d$ or $2d+1$ (for even- or odd-degree polynomials, respectively). The $\vp_1$ and $\vp_2$ bases can be any bases of univariate polynomials of the appropriate degree. 

\begin{theorem}\label{thm:cheb-bit-eps-condensed} For univariate polynomials nonnegative over $[-1,1]$, represented in the Chebyshev basis, the following hold: \begin{enumerate} 
\item The constant $k_1^\vone$ is bounded below by 1.

\item There exists a matrix $\mathbf{M}$ of the form given in \eqref{eq:mdef} such that $\cond(\vM) \leq 4$. 

\item For every $\vt \in \Sigma^\circ$ of degree $2d$ or $2d+1$, with $\vt - \varepsilon\vone$ on the boundary of $\Sigma$, there exists an integer certificate $\overline{\vy}\in \Ssc$ for $\vt$ with 
\[
\|\overline{\vy}\|_{\infty} \leq\frac{1}{2} + \frac{2d+2}{\varepsilon}  \left\lceil 3\sqrt{2d+2}\|\vt\|_{2}\right\rceil.
\]
and the bit size of the largest component of $\overline{\vy}$ is bounded as 
\[
\log(\|\overline{\vy}\|_{\infty}) \approx \mathcal{O}\left( \log(d) + \log(\|\vt\|_{2}) + \log(1/\varepsilon)\right).
\]
\end{enumerate} 
\end{theorem}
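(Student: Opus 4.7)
The plan is to specialize the generic bound of Theorem \ref{thm:bitsize-grad} to this setting by choosing $\ve = \vone$, which lies in $\Sc$ because the domain $[-1,1]$ is bounded, and then to bound the two constants $k_1^\vone$ and $\cond(\vM)$ separately using the Chebyshev-basis representation. Parts 1 and 2 of the claim supply these two ingredients; Part 3 is then a direct substitution into Theorem \ref{thm:bitsize-grad}.

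For Part 1, I would exploit that the coefficient vector of the constant polynomial $1$ in the Chebyshev basis has a $1$ in the entry corresponding to $T_0$ and zeros elsewhere, since $1 = T_0(z)$. Consequently $\vone^\T \vv$ equals the $T_0$-coefficient of $\vv$, call it $v_0$. It suffices to show $v_0 \geq \|\vv\|_\infty$ for every $\vv \in \Sigma^*$. Since $|T_i(z)| \leq 1$ on $[-1, 1]$ for every $i$, the polynomials $1 \pm T_i$ are nonnegative on $[-1,1]$ and therefore belong to $\Sigma$. Pairing their Chebyshev coefficient vectors with $\vv \in \Sigma^*$ gives $v_0 \pm v_i \geq 0$ for each $i \geq 1$, and $\vone \in \Sigma$ yields $v_0 \geq 0$. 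Hence $v_0 \geq |v_i|$ for all $i \geq 0$, so $v_0 \geq \|\vv\|_\infty$, as required.

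For Part 2, I would construct $\vM$ using Chebyshev nodes of the first kind. Specifically, I would set $s = U$, take $\vz_k = \cos\bigl(\pi(k - 1/2)/U\bigr)$ for $k = 1, \dots, U$, and assign equal weights $\alpha_k = 1$. These nodes lie in $S_\vw = [-1,1]$ and are distinct, hence form a unisolvent set for $\Sigma$. By the standard discrete orthogonality relation for Chebyshev polynomials of the first kind at these nodes, the resulting matrix $\vM = \sum_{k=1}^U \vq(\vz_k) \vq(\vz_k)^\T$ is diagonal, with the diagonal entry associated with $T_0$ equal to $U$ and each other diagonal entry equal to $U/2$. Hence $\cond(\vM) = 2 \leq 4$. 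This construction depends only on $\vq$ and is independent of the inner bases $\vp_1, \vp_2$ chosen in Proposition \ref{thm:Nesterov}.

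Part 3 then follows by direct substitution into the inequality of Theorem \ref{thm:bitsize-grad} using $k_1^\vone \geq 1$, $\cond(\vM) \leq 4$, and $\nu = U \leq 2d+2$ (with equality for odd degree $2d+1$, and $U = 2d+1$ in the even case). Taking logarithms yields the stated $\mathcal{O}(\log d + \log \|\vt\|_2 + \log(1/\varepsilon))$ bound. I do not anticipate serious obstacles here: the main technical content is the classical discrete orthogonality of Chebyshev polynomials, and the only care required is uniform bookkeeping across the even- and odd-degree cases.
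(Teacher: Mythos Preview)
Your proposal is correct, and Parts 1 and 3 match the paper's approach essentially verbatim (the paper simply cites \cite[Theorem~4.1]{DavisPapp2022} for Part~1, which is exactly the argument you spell out: $1\pm T_i$ is nonnegative on $[-1,1]$, hence WSOS in the univariate case, so $v_0\geq |v_i|$ for every $\vv\in\Sigma^*$).

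The genuine difference is in Part~2. The paper uses Chebyshev nodes of the \emph{second} kind, $z_l=\cos(\pi l/(2d))$ for $l=0,\dots,2d$, at which the discrete orthogonality relations (Mason--Handscomb, \S4.6.1) do \emph{not} yield a diagonal $\vM$: the endpoints $z_0=1$ and $z_{2d}=-1$ introduce correction terms, and the resulting matrix has off-diagonal entries equal to $1$ whenever $i\equiv j\pmod 2$. The paper then has to bound $\lambda_{\min}(\vM)\geq d$ via a direct quadratic-form computation and $\lambda_{\max}(\vM)\leq 3d+1$ via a row-sum bound, arriving at $\cond(\vM)\leq (3d+1)/d\leq 4$. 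Your choice of first-kind nodes (roots of $T_U$) avoids all of this: the classical discrete orthogonality at those nodes is exact for $0\leq i,j\leq U-1$, $\vM$ is diagonal with entries $U$ and $U/2$, and $\cond(\vM)=2$ immediately. Your route is shorter and gives a sharper constant; the paper's route has the minor advantage that second-kind nodes include the endpoints $\pm 1$, but this plays no role here.
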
 

\begin{proof} For brevity, we include the details for even degree polynomials only. We will index all vectors, matrices, and point sets from $0$ to $2d$.    
\begin{enumerate} 
\item This result comes from Theorem 4.1 in \cite{DavisPapp2022}, wherein $k_1$ denotes our constant $k^\vone_1$.

\item Consider univariate polynomials of degree $2d$, and consider the points $\vz = \{z_0,\dots,z_{2d}\}$, with $z_l = \cos\left(\frac{\pi l}{2d}\right),$ 
so $\vz$ is the set of extrema of $q_{2d+1}(\cdot)$ (also known as the Chebyshev nodes of the second kind). Recall that $\mathbf{M}$ is defined by
\[
\mathbf{M} =  \sum_{l=0}^{s}\alpha_i\vq(z_l)\vq(z_l)^\T.
\]
for $s < \infty$ and for some real numbers $\alpha_0,\dots,\alpha_s$. Here, we will set $s = 2d$ and $\alpha_0 = \dots = \alpha_{2d} = 1$. 
From \cite{MasonHandscomb2003}, Chapter 4.6.1, equations 4.45-4.46c, we know that for all $i = 0,\dots, 2d$ and $j = 0,\dots, 2d$, we have 
\begin{equation*}
-\frac{1}{2}q_i(z_0)q_j(z_0) - \frac{1}{2}q_i(z_{2d})q_j(z_{2d}) + \sum_{l=0}^{2d}q_i(z_l)q_j(z_l) = \begin{cases} 0 \text{ when } i \neq j, \\ d \text{ when } i = j, i \neq 0 \text{ nor } 2d \\ 2d \text{ when } i = j = 0 \text{ or } 2d \end{cases}
\end{equation*}
Therefore, noting that $q_i(z_{0})q_j(z_{0}) = \left(-1\right)^{i+j}$ and  $q_i(z_{2d})q_j(z_{2d}) = 1$ for each pair $(i,j)$, we have
\begin{equation*}
\left(\mathbf{M}\right)_{ij} \defeq \left(\sum_{l=0}^{2d}\vq(z_l)\vq(z_l)^\T\right)_{ij} = \begin{cases} 
1 \text{ when } i \neq j \text{ and }i \equiv j \mod{2} \\
0 \text{ when } i \neq j \text{ and }i \not\equiv j \mod{2} \\
d + 1 \text{ when } i = j, i \neq 0 \text{ nor } 2d \\
2d + 1 \text{ when } i = j, i = 0 \text{ or } 2d.
\end{cases}
\end{equation*} 
To bound the condition number of $\mathbf{M}$ from above, it suffices to give a lower bound for $\lambda_{\min}\left(\mathbf{M}\right)$ and an upper bound for $\lambda_{\max}\left(\mathbf{M}\right)$ (since $\mathbf{M}$ is positive definite).
First, we will exhibit a lower bound for $\lambda_{\min}\left(\mathbf{M}\right)$. For any $\vx \in \rr^{2d+1}$, with $\|\vx\|_2 = 1$, we have 
\begin{align*}
\vx^\T \mathbf{M} \vx &= \left(\sum_{i=0}^{d-1} x_{2i+1}\right)^2 + \left(\sum_{i=0}^d x_{2i}\right)^2 + d\left(2x_0^2 + x_1^2 + x_2^2 + \dots + x_{2d-1}^2 + 2x_{2d }^2 \right) \\
&\geq d.
\end{align*}
Therefore, $\lambda_{\min}\left(\mathbf{M}\right) \geq d$. 

Recall that $\lambda_{\max}\left(\mathbf{M}\right)$ can be bounded by the  largest absolute row sum of $\mathbf{M}$. The largest absolute row sum of $\left(\mathbf{M}\right)$ is $(2d+1) + d = 3d + 1$. Hence $\lambda_{\max}\left(\mathbf{M}\right) \leq 3d + 1$. It follows that $\cond\left(\mathbf{M}\right) \leq \frac{3d+1}{d} \leq 4$.

\item From Statement 1, we have $k_1^\vone \geq 1$, and from Statement 2,  we know there exists a matrix $\mathbf{M}$ in the form of  \ref{eq:mdef}  with $\cond(\mathbf{M}) \leq 4$. Moreover, we have $\nu \leq 2d+2$ and $U \leq 2d + 2$. Substituting these values into the formula given in Theorem \ref{thm:bitsize-grad} gives the result. \qedhere
\end{enumerate} 
\end{proof} 

\deletethis{
In this section, we assume $\Sigma$ is represented by the univariate Chebyshev basis. To reformulate the result from Theorem \ref{thm:bitsize-grad} into a result dependent only on $d$, $n$, and $\tau(t(\cdot))$, we have to bound the condition number of a matrix $\mathbf{M}$ with the form defined in \eqref{eq:mdef} from above and the constant $k_1^\ve$ defined in \eqref{eq:k1def} for a particular choice of $\ve$ from above. Here, we consider just the constrained case, so we assume $S_\vw = [-1,1]$. For this section, we will index all vectors, matrices, and point sets from $0$ to $2d$.  

\begin{lemma}\label{thm:k1-cheb} Let $n = 1$, and suppose we use Chebyshev polynomials (of the first kind) to represent the $\vq$ basis. Then for $k_1^\vone$ defined by 
\[
k_1^\vone = \min \{\vone^\T\vv \ | \ \vv \in \Sigma^*, \|\vv\|_{\infty} = 1\},
\]
we have $k_1^\vone \geq 1$. 
\end{lemma} 

\begin{proof} This result comes from Theorem 4.1 in \cite{DavisPapp2022}. \end{proof} 

\begin{lemma}\label{thm:k4} Consider univariate polynomials with $U = 2d + 1$. Let $\vq$ be the vector of Chebyshev polynomials (of the first kind), and let $\vq$ be a basis for $\Sigma$. Let $\vz = \{z_1,\dots,z_{2d+1}\}$ be the Chebyshev nodes of the second kind. 
\deletethis{
Then the matrix $\mathbf{M}$ defined by 
\[
\mathbf{M} =  \sum_{l=1}^{2d+1}\vq(z_l)\vq(z_l)^\T.
\]
has condition number at most 4. 
}
Then if $\vy \in \Ssc$ is the gradient certificate of $\vt \in \Sigma^\circ$, we have $\|H(\vy)^{1/2}\| \leq 4\|\vt\|_2$.
\end{lemma} 

\begin{proof} For brevity, we include the details for even degree polynomials only. Consider univariate polynomials of degree $2d$, where the $\vq = (q_0(\cdot),q_1(\cdot),\dots,q_{2d}(\cdot))$ polynomials are the Chebyshev polynomials (of the first kind). Consider the points $\vz = \{z_0,\dots,z_{2d}\}$, with $z_l = \cos\left(\frac{\pi l}{2d+1}\right)$, so $\vz$ is the set of extrema of $q_{2d+1}(\cdot)$ (and $z_l$ is the $l$th Chebyshev node of the second kind, for each $l$). Recall that $\mathbf{M}$ is defined by
\[
\mathbf{M} =  \sum_{l=0}^{s}\alpha_i\vq(z_l)\vq(z_l)^\T.
\]
for $s < \infty$ and for some real numbers $\alpha_0,\dots,\alpha_s$. Here, we will set $s = 2d$ and $\alpha_0 = \dots = \alpha_{2d} = 1$. 
From \cite{MasonHandscomb2003}, Chapter 4.6.1, lines 4.45-4.46c, we know that for all $i = 0,\dots, 2d$ and $j = 0,\dots, 2d$, we have 
\begin{equation}\label{eq:cheborthog}
-\frac{1}{2}q_i(z_0)q_j(z_0) - \frac{1}{2}q_i(z_{2d})q_j(z_{2d}) + \sum_{l=0}^{2d}q_i(z_l)q_j(z_l) = \begin{cases} 0 \text{ when } i \neq j, \\ d \text{ when } i = j, i \neq 0 \text{ nor } 2d \\ 2d \text{ when } i = j = 0 \text{ or } 2d \end{cases}
\end{equation}
Therefore, noting that $\frac{1}{2}q_i(z_0)q_j(z_0) = \frac{1}{2}$ for each $i,j$ pair, and $\frac{1}{2}q_i(z_{2d})q_j(z_{2d}) = \left(-1\right)^{i+j}\frac{1}{2}$ for each $i,j$ pair, we have
\begin{equation}\label{eq:cheb-mat}
\left(\mathbf{M}\right)_{ij} \defeq \left(\sum_{l=0}^{2d}\vq(z_l)\vq(z_l)^\T\right)_{ij} = \begin{cases} 
1 \text{ when } i \neq j \text{ and }i \equiv j \mod{2} \\
0 \text{ when } i \neq j \text{ and }i \not\equiv j \mod{2} \\
d + 1 \text{ when } i = j, i \neq 0 \text{ nor } 2d \\
2d + 1 \text{ when } i = j, i = 0 \text{ or } 2d.
\end{cases}
\end{equation} 
To bound the condition number of $\mathbf{M}$, it suffices to give a lower bound for $\lambda_{\min}\left(\mathbf{M}\right)$ and an upper bound for $\lambda_{\max}\left(\mathbf{M}\right)$ (since $\mathbf{M}$ is positive definite).
First, we will exhibit a lower bound for $\lambda_{\min}\left(\mathbf{M}\right)$. For any $\vz \in \rr^{2d+1}$, with $\|\vz\|_2 = 1$, we have 
\begin{align*}
\vz^\T \mathbf{M} \vz &= \left(\sum_{i=0}^{d-1} z_{2i+1}\right)^2 + \left(\sum_{i=0}^d z_{2i}\right)^2 + d\left(2z_0^2 + z_1^2 + z_2^2 + \dots + z_{2d-1}^2 + 2z_{2d }^2 \right) \\
&\geq d.
\end{align*}
Therefore, $\lambda_{\min}\left(\mathbf{M}\right) \geq d$. 

Recall that $\lambda_{\max}\left(\mathbf{M}\right)$ can be bounded by the  largest absolute row sum of $\mathbf{M}$. The largest absolute row sum of $\left(\mathbf{M}\right)$ is $(2d+1) + d = 3d + 1$. Hence $\lambda_{\max}\left(\mathbf{M}\right) \leq 3d + 1$. It follows that $\cond\left(\mathbf{M}\right) \leq \frac{3d+1}{d} \leq 4$. The conclusion that $\|H(\vy)^{1/2}\| \leq 4\|\vt\|_2$ follows from Lemma \ref{thm:hess}.
\end{proof}

We are ready to refine the bound given in Theorem \ref{thm:bitsize-grad} on a certificate of the polynomial $\vt$ in this case.
\begin{theorem}\label{thm:ynunivariate} Suppose that $n = 1$ and $\deg t = 2d$, and suppose that we use the Chebyshev basis to represent all polynomials. For every $\vt \in \Sigma^\circ$ with $\vt - \varepsilon\vone$ on the boundary of $\Sigma$ (and $\vy$ being the gradient certificate of $\vt$), there exists an integer certificate $\overline{\vy}\in \Ssc \cap \zz^U$ for $\vt$ with 
\[
\|\overline{\vy}\|_{\infty} \leq \mathcal{C}\left(\frac{d^{3/2}\|\vt\|_2}{\varepsilon}\right),
\]
wherein $\mathcal{C}$ is an absolute constant.
(and taking the log) 
\[
\log(\|\overline{\vy}\|_{\infty}) \leq \mathcal{O}\left( \log(d) + \log(\|\vt\|_{2}) - \log(\varepsilon)\right),
\]
\end{theorem} 

\begin{proof} From \ref{thm:k1-cheb}, we have $k_1^\vone \geq 1$. Moreover, we have $\nu = 2d + 1$, and, from Lemma \ref{thm:k4}, we know there exists a matrix $\mathbf{M}$ in the form of Eq. \ref{eq:mdef}  with $\cond(\mathbf{M}) \leq 4$. Substituting these values into the formula given in Theorem \ref{thm:bitsize-grad} gives the result.
\end{proof} 
}

Now we bound the minimum $\varepsilon$ of a positive univariate polynomial on the interval $[-1,1]$ with integer coefficients, so that we can give an $\varepsilon$-free result of the Theorem above. 

\begin{lemma}[\protect{adapted from \cite[Thm.~1.2]{BasuLeroyRoy2009}}]\label{thm:eps-univ-con}
Let $t$ be a univariate polynomial of degree $d$ taking only positive values on the interval $[-1,1]$, and suppose that the coefficients of $t$ in the monomial basis are integers of bit size no more than $\tau$. Then we have
\[ \min_{z\in[-1,1]} t(z) >
\frac{ 3^{d/2} }{ 2^{(2d-1)\tau} (d+1)^{2d-1/2} }.\]
\end{lemma}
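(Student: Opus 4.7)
The plan is to reduce the statement to the original Basu--Leroy--Roy bound by an affine change of variables that carries $[-1,1]$ onto $[0,1]$. First, I would apply the substitution $u = (z+1)/2$, and set $\tilde t(u) \defeq t(2u-1)$; this transformed polynomial has the same degree $d$ as $t$, is positive on $[0,1]$, and satisfies $\min_{u\in[0,1]} \tilde t(u) = \min_{z\in[-1,1]} t(z)$, so a lower bound on the former immediately yields the claim.

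Next I would bound the bit size of the coefficients of $\tilde t$. Expanding via the binomial theorem, $(2u-1)^i = \sum_{j=0}^i \binom{i}{j} (-1)^{i-j} 2^j u^j$, so writing $t(z) = \sum_{i=0}^d a_i z^i$ the coefficient of $u^j$ in $\tilde t$ is $\tilde a_j = 2^j \sum_{i=j}^d a_i \binom{i}{j} (-1)^{i-j}$. Using the hockey-stick identity $\sum_{i=j}^d \binom{i}{j} = \binom{d+1}{j+1}$ together with $\binom{d+1}{j+1} \le 2^{d+1}$, the integer coefficients satisfy $|\tilde a_j| \le 2^{\tau} \cdot 2^{j} \cdot 2^{d+1} \le 2^{\tau + 2d + 2}$, so their bit size is at most $\tilde\tau := \tau + 2d + 2$. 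With this bound in hand, I would invoke Theorem~1.2 of \cite{BasuLeroyRoy2009} applied to $\tilde t$ on $[0,1]$ with bit-size parameter $\tilde\tau$, and then simplify the exponents of $2$, $3$, and $(d+1)$ to recover the claimed bound on $\min_{z\in[-1,1]} t(z)$.

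The hard part will be the bookkeeping in this final step: a direct substitution $\tilde\tau \le \tau + O(d)$ into the BLR bound leaves a residual factor of $2^{O(d^2)}$ in the denominator, slightly worse than what is stated. To match the bound exactly, one option is to sharpen the coefficient estimate above by exploiting cancellations in the alternating sum $\sum_{i=j}^d a_i \binom{i}{j} (-1)^{i-j}$ (for instance by grouping terms in pairs or by using a finite-difference interpretation). A cleaner alternative is to redo the Cauchy-style evaluation argument underlying the proof of BLR's theorem directly on $[-1,1]$: the core inequalities there bound values of polynomials at rational points in terms of their coefficients and transfer transparently to the symmetric interval, avoiding the coefficient blow-up altogether. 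Either route is essentially routine; in the worst case, even accepting the sub-dominant $2^{O(d^2)}$ slack still yields a bit size of order $\mathcal{O}(d\tau + d^2)$, which is all that the downstream applications in Section~\ref{sec:gradcerts-bases} require.
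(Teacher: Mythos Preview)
The paper does not actually give a proof of this lemma; it is stated with the attribution ``adapted from \cite[Thm.~1.2]{BasuLeroyRoy2009}'' and used as a black box. So there is no paper proof to compare your approach against --- the authors implicitly rely on the fact that the Cauchy-style argument underlying the original result transfers to $[-1,1]$ with the same constants.

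Your reduction via $z\mapsto 2u-1$ is a natural attempt, and you correctly diagnose its weakness: the substitution genuinely inflates the monomial coefficients by a factor up to $2^{O(d)}$, so plugging $\tilde\tau=\tau+O(d)$ into the $[0,1]$ bound costs an extra $2^{(2d-1)\cdot O(d)}=2^{O(d^2)}$ in the denominator and does not recover the stated inequality. Your first proposed remedy --- exploiting cancellation in the alternating sum $\sum_{i\ge j}a_i\binom{i}{j}(-1)^{i-j}$ --- cannot work in general: taking $a_i=(-1)^{i-j}2^{\tau}$ aligns all signs and makes the sum equal $2^{\tau}\binom{d+1}{j+1}$, so the $2^{O(d)}$ growth is unavoidable for some inputs. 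Your second remedy, rerunning the Basu--Leroy--Roy evaluation argument directly on $[-1,1]$, is exactly what ``adapted from'' is pointing at; that is the honest way to obtain the displayed bound, and it is indeed routine, but your proposal stops short of doing it. As you note, the slack $2^{O(d^2)}$ only changes the downstream corollaries from $\mathcal{O}(d\tau+d\log d)$ to $\mathcal{O}(d\tau+d^2)$, so for the purposes of Section~\ref{sec:gradcerts-bases} the substitution argument you wrote out is already sufficient --- but if you want the lemma exactly as stated, you must take the second route.
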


Lemma \ref{thm:eps-univ-con} assumes that $t$ is represented in the monomal basis. For our next result, the change of basis (from the Chebyshev basis to monomial) can be incorporated using the observation that a polynomial of degree $d$ with integer coefficients of bit size at most $\tau$ in the Chebyshev basis also has integer coefficients in the monomial basis, and the bit size of the largest magnitude coefficient in the monomial basis is no more than $2d+\tau$. We are now ready to state our $\varepsilon$-free version of Theorem \ref{thm:cheb-bit-eps-condensed}.
\begin{corollary} (To Theorem \ref{thm:cheb-bit-eps-condensed}) Using the same notation as in Theorem \ref{thm:cheb-bit-eps-condensed}, 
assume that $\vt \in \Sc$ is the coefficient vector in the Chebyshev basis of a polynomial of degree at most $2d+1$, and assume that the components of $\vt$ are integers with bit sizes at most $\tau$. Then there exists an integer certificate $\overline{\vy}\in \Ssc \cap \zz^U$ for $\vt$ with  
\[
\log(\|\overline{\vy}\|_{\infty}) \approx  \mathcal{O}\left(d\tau + d^2\right).
\]
\end{corollary}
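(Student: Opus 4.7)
The plan is to invoke Theorem \ref{thm:cheb-bit-eps-condensed} directly, which already provides the asymptotic formula
\[ \log(\|\overline{\vy}\|_\infty) = \Oh\bigl(\log(d) + \log(\|\vt\|_2) + \log(1/\varepsilon)\bigr), \]
and then substitute upper bounds for $\log(\|\vt\|_2)$ and $\log(1/\varepsilon)$ expressed purely in terms of $d$ and $\tau$. With the ansatz $\ve = \vone$, the number $\varepsilon$ is simply the minimum value of $t$ on $[-1,1]$, which is strictly positive by hypothesis.

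The bound on $\log(\|\vt\|_2)$ is immediate from the hypothesis: $\vt$ has at most $2d+2$ integer entries of magnitude at most $2^\tau$, so $\|\vt\|_2 \leq 2^\tau\sqrt{2d+2}$ and hence $\log(\|\vt\|_2) = \Oh(\tau + \log d)$, which is absorbed into the target $\Oh(d\tau + d^2)$.

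The substantive step is to bound $\log(1/\varepsilon)$. The plan is to apply Lemma \ref{thm:eps-univ-con} to the polynomial $t$, but that lemma is phrased in terms of monomial-basis coefficients while our hypothesis is on Chebyshev-basis coefficients. I would bridge this gap via the change-of-basis observation already quoted in the paragraph immediately preceding the corollary: a univariate polynomial of degree $d' \leq 2d+1$ with integer Chebyshev-basis coefficients of bit size at most $\tau$ has integer monomial coefficients of bit size at most $\tau' := 2d' + \tau = \Oh(d + \tau)$. Feeding $d'$ and $\tau'$ into Lemma \ref{thm:eps-univ-con} yields
\[ \varepsilon \;>\; \frac{3^{d'/2}}{2^{(2d'-1)\tau'}\,(d'+1)^{2d'-1/2}}, \]
whence $\log(1/\varepsilon) = \Oh(d'\tau' + d'\log d') = \Oh\bigl(d(d+\tau) + d\log d\bigr) = \Oh(d\tau + d^2)$.

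Combining these two bounds inside the formula from Theorem \ref{thm:cheb-bit-eps-condensed} gives
\[ \log(\|\overline{\vy}\|_\infty) = \Oh(\log d) + \Oh(\tau + \log d) + \Oh(d\tau + d^2) = \Oh(d\tau + d^2), \]
which is the claim. The main obstacle is purely bookkeeping in the basis change: one has to be sure that the $2^{\Oh(d)}$ inflation from the Chebyshev-to-monomial conversion is absorbed into the $\tau'$ fed into Lemma \ref{thm:eps-univ-con}, and that the resulting $d'\tau'$ contribution does not exceed the $d\tau + d^2$ budget. No other ingredient beyond what is already developed in the paper is needed.
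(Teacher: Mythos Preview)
Your proposal is correct and follows essentially the same approach as the paper: the paper's proof simply substitutes the bound on $\varepsilon$ from Lemma~\ref{thm:eps-univ-con} (after the Chebyshev-to-monomial basis conversion noted in the paragraph preceding the corollary) into the bound from Theorem~\ref{thm:cheb-bit-eps-condensed}. Your write-up is in fact more explicit about the bookkeeping than the paper's one-line proof.
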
 

\begin{proof} The result comes from substituting the bound on $\varepsilon$ from \mbox{Lemma \ref{thm:eps-univ-con}} and the previous paragraph into the bound from Theorem \ref{thm:cheb-bit-eps-condensed}.
\end{proof} 

\subsubsection{Univariate monomial basis}
Here, we let $\vq$ represent the univariate monomial basis up to degree $2d$ or $2d+1$, for even- or odd-degree polynomials, respectively. The bases $\vp_1$ and $\vp_2$ can be any univariate polynomial bases. 
\begin{theorem}\label{thm:univ-bit-eps-condensed} For univariate polynomials nonnegative over $[-1,1]$, represented in the monomial basis, the following hold: \begin{enumerate} 
\item The constant $k_1^\vone$ is bounded below by 1.

\item There exists a matrix $\mathbf{M}$ of the form given in \eqref{eq:mdef} such that $\cond(\mathbf{M}) \approx \mathcal{O}\left( (1+\sqrt{2})^{4U}/\sqrt{U}\right)$, wherein $U = \dim(\Sigma)$. 

\item For every $\vt \in \Sigma^\circ$ of degree $2d$ or $2d+1$, with $\vt - \varepsilon\vone$ on the boundary of $\Sigma$, there exists an integer certificate $\overline{\vy}\in \Ssc$ for $\vt$ with 
\[
\|\overline{\vy}\|_\infty \approx \mathcal{O}\left(d^{5/4}(1+\sqrt{2})^{4d+4}\frac{\|\vt\|_2}{\varepsilon}\right),
\]
and the bit size of the largest component of $\overline{\vy}$ is bounded as 
\[
\log(\|\overline{\vy}\|_{\infty}) \approx \mathcal{O}\left(d  + \log(\|\vt\|_{2}) + \log(1/\varepsilon)\right).
\]
\end{enumerate} 
\end{theorem}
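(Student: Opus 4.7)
The plan is to verify Parts 1 and 2 separately and then deduce Part 3 by plugging their conclusions into Theorem~\ref{thm:bitsize-grad}, in direct parallel with the proofs of Theorems~\ref{thm:monomial-bit-eps-condensed} and~\ref{thm:cheb-bit-eps-condensed}.

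For Part 1, I would use the fact that in the monomial basis the constant one polynomial has coefficient vector $\vone=(1,0,\ldots,0)^{\T}$, so $\vone^{\T}\vv = v_0$. Since the chosen weights correspond to $S_\vw = [-1,1]$, the truncated Hausdorff moment theorem identifies $\Sigma^*$ (characterized here by the PSD conditions on $\Lambda_1(\vv)$ and $\Lambda_2(\vv)$) with the set of truncated moment sequences of nonnegative measures supported on $[-1,1]$. Writing $v_k = \int_{-1}^1 t^k\,d\mu(t)$ for some nonnegative $\mu$ and using $|t^k|\le 1$ on $[-1,1]$, it follows that $|v_k|\le v_0$ for every $k$, so $\|\vv\|_\infty = v_0$. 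Normalizing to $\|\vv\|_\infty = 1$ therefore gives $\vone^{\T}\vv = 1$, hence $k_1^\vone \geq 1$.

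For Part 2, I would realize $\vM$ as the moment matrix of Lebesgue measure on $[-1,1]$ via Gauss-Legendre quadrature: choosing $U$ nodes $z_i\in[-1,1]$ with positive weights $\alpha_i$, exactness of Gauss-Legendre on polynomials of degree up to $2U-1 \geq 4d$ gives $\vM_{jk} = \sum_i \alpha_i z_i^{j+k} = \int_{-1}^1 t^{j+k}\,dt$. This matrix vanishes on entries where $j+k$ is odd, so a permutation by parity decomposes $\vM$ into two positive definite Cauchy-like blocks of approximately equal size, each with entries of the form $2/(2i+2j+c)$ for a small constant $c\in\{1,3\}$. The hard part will be importing a sharp condition-number bound for these blocks: I would appeal to the classical asymptotic $\kappa \sim (1+\sqrt{2})^{4n}/\sqrt{n}$ for the $n\times n$ Hilbert matrix (Todd; see also Beckermann's analysis invoked in Theorem~\ref{thm:monomial-bit-eps-condensed}), which transfers to each of these Cauchy-like blocks by a scaling/change-of-variables argument, and take the maximum over the two blocks. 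This yields $\cond(\vM) = \mathcal{O}((1+\sqrt{2})^{4U}/\sqrt{U})$.

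Part 3 then follows by straightforward substitution: combining $\nu \leq 2d+2$, $U \leq 2d+2$, $k_1^\vone \geq 1$, and the condition-number bound from Part 2 in the estimate of Theorem~\ref{thm:bitsize-grad} gives $\|\overline{\vy}\|_\infty = \mathcal{O}(d^{5/4}(1+\sqrt{2})^{4d+4}\|\vt\|_2/\varepsilon)$, and taking logarithms produces the bit-size estimate. I expect the main technical obstacle throughout to be Part 2, where cleanly extracting the $(1+\sqrt{2})^{4U}/\sqrt{U}$ constant (as opposed to a looser exponential bound) requires care in handling the block decomposition and matching it against a precise Hilbert-matrix asymptotic.
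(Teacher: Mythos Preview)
Your overall strategy (verify Parts 1 and 2, then substitute into Theorem~\ref{thm:bitsize-grad}) matches the paper's exactly, and your arguments for Parts 1 and 3 are essentially the paper's. The interesting difference is in Part~2.

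For Part~2, the paper takes a shorter route than you do. Rather than building $\vM$ from Gauss--Legendre quadrature on $[-1,1]$ (which, as you note, leads to a checkerboard moment matrix that must be block-decomposed into two Cauchy-like blocks before any Hilbert-matrix asymptotic can be invoked), the paper exploits the simple observation that $[0,1]\subset S_\vw=[-1,1]$, so one is free to choose all quadrature nodes in $[0,1]$. With nodes and weights coming from Gaussian quadrature for Lebesgue measure on $[0,1]$, the resulting $\vM$ is exactly the $U\times U$ Hilbert matrix $\big(\tfrac{1}{j+k+1}\big)_{j,k=0}^{U-1}$, and the claimed $\mathcal{O}\!\left((1+\sqrt{2})^{4U}/\sqrt{U}\right)$ bound is then a direct citation of classical eigenvalue estimates for the Hilbert matrix. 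No parity decomposition, no transfer argument, no matching of constants across blocks is needed.

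Your route is not wrong, but it is more laborious, and the step you flag as the ``hard part'' is genuinely delicate: your two blocks have sizes roughly $U/2$, so a naive application of the $n\times n$ Hilbert asymptotic to each block yields growth of order $(1+\sqrt{2})^{2U}$ rather than $(1+\sqrt{2})^{4U}$. That still implies the stated $\mathcal{O}$-bound (with slack), so your argument would go through, but it does not recover the constant cleanly, whereas the paper's choice of $[0,1]$ nodes makes the stated constant drop out immediately.
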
 

\begin{proof} \phantom{}
\begin{enumerate}
\item Using the observation that monomial basis polynomials also take values from $[-1,1]$ on the interval $[-1,1]$, this result comes from a slight adaptation of Theorem 4.1 in \cite{DavisPapp2022}, using the monomial basis instead of the Chebyshev. 

\item  We may choose $\vM$ to be the $U\times U$ Hilbert matrix, which is the (truncated) moment matrix of the uniform measure on $[0,1]$. Hence (similarly to the argument in Theorem \ref{thm:monomial-bit-eps-condensed}) it is also the moment matrix of a finitely supported measure on $[0,1]$, and therefore it can be written in the form given in \eqref{eq:mdef}. The Hilbert matrix is well-known to have a condition number which grows as $\mathcal{O}\left((1 + \sqrt{2})^{4U}/\sqrt{U}\right)$; see, for example, \cite[Thm. 294]{HardyLittlewoodPolya1934} and \cite[Eq. 3.35]{Wilf1970} for upper and lower bounds on the maximum and minimum eigenvalues of the Hilbert matrix, respectively. 


\item Substituting the results from Statements 1 and 2 as well as the bounds $\nu \leq 2d+2$ and $U \leq 2d + 2$ into the formula from Theorem \ref{thm:bitsize-grad} yields the result. \qedhere
\end{enumerate} 
\end{proof} 
As a corollary, we have the following: 
\begin{corollary} Using the same notation as in Theorem \ref{thm:univ-bit-eps-condensed}, assume that $\vt \in \Sigma^\circ$ is the coefficient vector in the monomial basis of a polynomial of degree at most $2d+2$, and assume that the components of $\vt$ are integers of bit size at most $\tau$. Then there exists an integer certificate $\overline{\vy} \in \Ssc \cap \zz^U$ for $\vt$ with 
\[
\log\left(\|\overline{\vy}\|_\infty\right) \approx \mathcal{O}\left(d\tau + d\log(d)\right).
\]
\end{corollary}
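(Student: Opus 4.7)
The plan is to combine the bit-size bound from Theorem \ref{thm:univ-bit-eps-condensed} (Statement 3) with an explicit lower bound on $\varepsilon$ that depends only on $d$ and $\tau$. Recall that by hypothesis, $\vt-\varepsilon\vone\in\bd(\Sigma)$, so $\varepsilon=\min_{z\in[-1,1]}t(z)$, and this minimum is strictly positive since $\vt\in\Sigma^\circ$. Therefore Lemma \ref{thm:eps-univ-con} (the adaptation of \cite[Thm.~1.2]{BasuLeroyRoy2009}) applies directly, because the polynomial is already expressed in the monomial basis with integer coefficients of bit size at most $\tau$.

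Concretely, I would proceed in three steps. First, invoke Theorem \ref{thm:univ-bit-eps-condensed}, Statement 3, to obtain
\[
\log\|\overline{\vy}\|_\infty \;=\; \mathcal{O}\!\left(d+\log\|\vt\|_2+\log(1/\varepsilon)\right).
\]
Second, bound $\log\|\vt\|_2$ using the hypothesis on the coefficients: since $\vt$ has at most $2d+2$ integer components each of bit size at most $\tau$, we have $\|\vt\|_2\le 2^\tau\sqrt{2d+2}$, so $\log\|\vt\|_2=\mathcal{O}(\tau+\log d)$. Third, apply Lemma \ref{thm:eps-univ-con} to the polynomial $t$, which has degree at most $2d+2$ and integer coefficients of bit size at most $\tau$ in the monomial basis: this yields
\[
\varepsilon \;=\; \min_{z\in[-1,1]} t(z) \;>\; \frac{3^{(2d+2)/2}}{2^{(4d+3)\tau}(2d+3)^{4d+3/2}},
\]
and hence $\log(1/\varepsilon)=\mathcal{O}(d\tau+d\log d)$.

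Combining the three estimates gives
\[
\log\|\overline{\vy}\|_\infty \;=\; \mathcal{O}\!\left(d+\tau+\log d+d\tau+d\log d\right) \;=\; \mathcal{O}\!\left(d\tau+d\log d\right),
\]
as claimed. The substantive ingredient is Lemma \ref{thm:eps-univ-con}; the rest is bookkeeping. There is no real obstacle here, since every piece is already set up earlier in the section: the only thing worth being careful about is that Lemma \ref{thm:eps-univ-con} is stated for polynomials in the monomial basis and requires no change of basis in the present subsection (in contrast to the Chebyshev case, where a $\log(1/\varepsilon)=\mathcal{O}(d\tau+d^2)$ bound arises from the extra $2d$ term in the change-of-basis estimate).
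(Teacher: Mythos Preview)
Your proposal is correct and follows exactly the paper's own approach: substitute the lower bound on $\varepsilon$ from Lemma~\ref{thm:eps-univ-con} into the bit-size bound of Theorem~\ref{thm:univ-bit-eps-condensed}, Statement~3. Your write-up is in fact more detailed than the paper's one-line proof, and your observation that no change-of-basis correction is needed here (in contrast to the Chebyshev case) is precisely the point.
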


\begin{proof}
This comes from substituting the bound on $\varepsilon$ from Lemma \ref{thm:eps-univ-con} into the bound from Theorem \ref{thm:univ-bit-eps-condensed}.
\end{proof}

\subsubsection{Univariate Lagrange interpolant basis}
We now turn our attention to another common basis choice for polynomials over an interval: Lagrange interpolation polynomials. We use the same notation introduced at the beginning of Section \ref{sec:univ-cheb} to describe $\Sigma$, the weight polynomials $\vw$, and their respective variations for even- and odd-degree polynomials, except here we let $\vq$ (using the notation of Proposition \ref{thm:Nesterov}) be a \textit{Lagrange interpolation polynomial basis.} Precisely, let $\{z_1,\dots,z_U\}$ be a unisolvent point set in $S_\vw$. (As before, $U=\dim(\Sigma)$.) Define the Lagrange interpolation polynomial $q_i\; (i=1,\dots,U)$ to be the unique polynomial such that $q_i(z_i) = 1$ for each $i$ and $q_i(z_j) = 0$ when $i \neq j$. Then, we define the ordered Lagrange interpolation polynomial basis $\vq$ using these polynomials as $\vq = (q_1,\dots,q_U)$. 

The primary change from the Chebyshev and monomial bases is that the bit sizes of the certificates now depend on the choice of interpolation points $z_i$ in a quantifiable manner.

\begin{theorem}\label{thm:univ-interp-condensed} Suppose that, as detailed in the previous paragraph, we represent polynomials in the Lagrange interpolant basis corresponding to the interpolation points $\{z_1,\dots,z_U\}$. Then the following hold: 
\begin{enumerate} 
\item Letting $\mu = \max_{i=1,\dots,U}(\max_{-1\leq z \leq 1}|q_i(z)|)$, we have $k_1^\vone \geq \frac{1}{\mu}$. 
\item The matrix $\vM$ given in \eqref{eq:mdef} can be chosen to be the identity matrix, with $\cond(\vM) = 1$. 
\item For every $\vt \in \Sigma^\circ$ of degree $2d$ or $2d+1$ with $\vt - \varepsilon \ve \in \bd(\Sigma)$, there exists an integer dual certificate $\overline{\vy} \in \Ssc$ for $\vt$ with 
\end{enumerate} 
\deletethis{
\[
\|\overline{\vy}\|_{\infty} \leq \frac{1}{2} + \left\lceil \frac{3}{2}\sqrt{U}\|\vt\|_2 \right\rceil\left(\frac{\nu\mu}{\varepsilon} \right)
\]
}
\[
\|\overline{\vy}\|_{\infty} \leq \frac{1}{2} + \left\lceil \frac{3}{2}\sqrt{2d+2}\|\vt\|_2 \right\rceil\left(\frac{(2d+2)\mu}{\varepsilon} \right),
\]
whose largest component has bit size bounded as 
\begin{equation}\label{eq:interp-univ-bitsize}
\log(\|\overline{\vy}\|_{\infty}) \approx \mathcal{O}\left(\log(d) + \log(\|\vt\|_2) + \log(\mu) + \log(1/\varepsilon)\right).
\end{equation}
\end{theorem}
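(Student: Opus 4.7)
The plan is to verify the three statements in order, following the same template used in the Chebyshev and monomial-basis subsections and culminating in an application of Theorem~\ref{thm:bitsize-grad}. For Statement 1, the first observation is that in the Lagrange basis the constant one polynomial has coordinate vector $\vone = (1,\dots,1)^\T$, because $\sum_{i=1}^U q_i - 1$ is a polynomial of degree at most $U-1$ vanishing at the $U$ interpolation points and is therefore identically zero. The key step is then to exhibit, for each $i$, explicit elements of $\Sigma$ with known Lagrange coordinates: the polynomials $\mu \pm q_i(\cdot)$ are nonnegative on $[-1,1]$ by the definition of $\mu$, and have degree at most $2d+1$, so by the Markov-Lukács theorem (applied with the even- or odd-degree weights $\vw$ as appropriate) they belong to $\Sigma$. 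Their Lagrange coordinate vectors are $\mu\vone \pm \mathbf{e}_i$, where $\mathbf{e}_i$ is the $i$-th standard basis vector, since $1 = \sum_j q_j$. Pairing with an arbitrary $\vv \in \Sigma^*$ satisfying $\|\vv\|_\infty = 1$ yields $\mu\,\vone^\T\vv \pm v_i \geq 0$, hence $|v_i| \leq \mu\,\vone^\T\vv$; maximizing over $i$ gives $1 \leq \mu\,\vone^\T\vv$, so $k_1^\vone \geq 1/\mu$.

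For Statement 2, the plan is to take the interpolation points $z_1,\dots,z_U \in S_\vw$ themselves as the evaluation nodes in \eqref{eq:mdef} (they are unisolvent for $\Sigma$ by construction of the Lagrange basis), with uniform weights $\alpha_1 = \cdots = \alpha_U = 1$. By the defining property $q_j(z_i) = \delta_{ij}$ we have $\vq(z_i) = \mathbf{e}_i$, so $\vM = \sum_{i=1}^U \mathbf{e}_i \mathbf{e}_i^\T = \mathbf{I}$ and $\cond(\vM) = 1$. Statement 3 then follows by direct substitution of $k_1^\vone \geq 1/\mu$, $\cond(\vM) \leq 1$, and $U,\nu \leq 2d+2$ (recorded at the start of Section~\ref{sec:gradcerts-bases}) into the bound of Theorem~\ref{thm:bitsize-grad}; the bit-size estimate \eqref{eq:interp-univ-bitsize} is then obtained by taking logarithms.

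The main obstacle is the parity-dependent degree bookkeeping in Statement 1, namely confirming that $\mu \pm q_i$ admits the WSOS representation prescribed by the chosen weight polynomials $\vw$ ($\{1,1-z^2\}$ in the even-degree case and $\{1-z,1+z\}$ in the odd-degree case). This requires a separate verification of the degree fit and the Markov-Lukács representation in the two cases, but both are standard. Once this case split is handled, the remainder is routine substitution identical in structure to the final steps of Theorems~\ref{thm:cheb-bit-eps-condensed} and~\ref{thm:univ-bit-eps-condensed}.
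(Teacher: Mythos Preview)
Your proof is correct, and Statements 2 and 3 are argued exactly as in the paper. For Statement 1 you take a slightly different route: you argue directly on the primal side by producing the explicit elements $\mu \pm q_i \in \Sigma$ (with Lagrange coordinates $\mu\vone \pm \mathbf{e}_i$) and pairing them against an arbitrary $\vv\in\Sigma^*$ to obtain $|v_i|\le \mu\,\vone^\T\vv$. The paper instead invokes the reformulation \eqref{eq:k1e-convex} of $k_1^\vone$ as a minimum of $2U$ convex programs, writes down their duals, and exhibits $\vx = \pm\tfrac{1}{\mu}\mathbf{e}_i$ as feasible dual points with objective value $\tfrac{1}{\mu}$; weak duality then gives the bound. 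The two arguments are really the same fact read from opposite sides of LP duality: the dual feasibility condition $\vone - \vx \in \Sigma$ for $\vx = \pm\tfrac{1}{\mu}\mathbf{e}_i$ is precisely your assertion that $1 \mp \tfrac{1}{\mu}q_i \in \Sigma$, and both rely on Markov--Luk\'acs to equate nonnegativity on $[-1,1]$ with membership in $\Sigma$. Your direct pairing argument is a bit more elementary, as it bypasses the explicit dual LP formulation, at the cost of requiring the observation that $\vone=(1,\dots,1)$ in the Lagrange basis (which the paper's dual constraint $\vone-\vx\in\Sigma$ uses implicitly).
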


\begin{proof} \phantom{ }
\begin{enumerate} 
\item Recall that the optimization problem to compute $k_1^\vone$ can be solved by finding the minimum of the  $2U$ convex optimization problems in \eqref{eq:k1e-convex} (with two such problems for each $i = 1,\dots,U$). Fix $i$ with $1 \leq i \leq 
U$. The two optimization problems for this $i$  
have as their respective duals
\[\sup \{\pm x_i + |x_i| - \|\vx\|_{1} \ | \ \vone - \vx \in \Sigma, \vx \in \rr^{U}\}.\] In the univariate setting, a polynomial belongs to $\Sigma$ if and only if it is nonnegative. Hence, each $\vx = (0,\dots, \pm \frac{1}{\mu}, \dots, 0)$, with $\pm \frac{1}{\mu}$ in the $i$th coordinate, is a feasible solution to the dual problem with objective value $\frac{1}{\mu}$. Thus, $\frac{1}{\mu}$ is a lower bound for the infima of each of the $2U$ problems in \eqref{eq:k1e-convex}, therefore $k_1^\vone \geq \frac{1}{\mu}$.
\item By the definition of the Lagrange basis polynomials,
\[
 \vM = \sum_{i=1}^{U}\vq(\vz_i)\vq(\vz_i)^\T = \mathbf{I},
\]
the identity matrix, whose condition number is 1.
\item  From Statement 1, we have $k_1^\vone \geq \frac{1}{\mu}$, and from Statement 2, we know there exists a matrix $\mathbf{M}$ in the form of Eq. \ref{eq:mdef} with $\cond(\mathbf{M}) \leq 1$.  Moreover, we have $\nu \leq 2d+2$ and $U \leq 2d+2$.  Substituting these values into the formula given in Theorem \ref{thm:bitsize-grad} gives the result. \qedhere
\end{enumerate} 
\end{proof} 

\begin{remark} The parameter $\mu$ from Statement 1 of Theorem \ref{thm:univ-interp-condensed} is closely related to the \emph{Lebesgue constant} $\mathcal{L} \defeq \max_{x \in [-1,1]} \sum_{i=1}^{U}|q_i(x)|$, the operator norm of the interpolation operator (with respect to the uniform norm). It is well-understood that the choice of interpolation points with a small Lebesgue constant is crucial in numerical computation with interpolants (see, e.g., \cite[Chap.~15]{Trefethen2013}); this has also been demonstrated in the context of sum-of-squares optimization \cite{Papp2017}. One interpretation of Theorem \ref{thm:univ-interp-condensed} is that the choice of interpolation points is important even in exact-arithmetic computation, as the bit sizes of the certificates is affected by the Lebesgue constant.

In the univariate case, the Lebesgue constant with $U$ suitably chosen interpolation points from $[-1,1]$ can be as low as $\Oh(\log U)$ \cite[Chap.~15]{Trefethen2013}. Thus, even for fairly suboptimal points, the impact of $\mu \leq U\mathcal{L} \approx \Oh(d \log d)$ in the bit size bound \eqref{eq:interp-univ-bitsize} is dominated by the $\log d$ term, simplifying the bound to \[\log(\|\overline{\vy}\|_{\infty}) \approx \Oh(\log(d) + \log(\|\vt\|_2) + \log(1/\varepsilon)).\]


\end{remark}

\deletethis{
Now, we can give an $\varepsilon$-free result, relying on an earlier result  bounding $\varepsilon$ from below (Lemma \ref{thm:eps-univ-con}). Assume the interpolation points chosen for the Lagrange interpolation basis are rational. The change of basis (from the interpolant basis to monomial) in this setting can be incorporated using the observation that a degree-$d$ polynomial with integer coefficients of bit size at most $\tau$ in the interpolant basis also has integer coefficients in the monomial basis, and the bit size of the largest magnitude coefficient is no more than $d + \tau_L + \tau$, where $\tau_L$ is the bit size of the largest coefficient of each of the $\vq$ polynomials when represented in the monomial basis ($\tau_L$ is dependent on the chosen interpolation points). 
\begin{corollary} Let $\vt \in \Sigma^\circ$ be a polynomial of degree at most $2d+1$ with integer coefficients. Under the same assumptions as Theorem \ref{thm:univ-interp-condensed}, 
there exists an integer certificate $\overline{\vy}\in \Ssc \cap \zz^U$ for $\vt$ with
\[
\|\overline{\vy}\|_{\infty} \leq \frac{1}{2} +  \mu \left(\frac{2^\tau2^{(4d+1)(2d+1+\tau_L+\tau)-1}(2d+2)^{4d+7/2}}{3^{d-1/2}} \right)
\]

whose largest component has bit size bounded as 

\[
\log(\|\overline{\vy}\|_{\infty}) \approx \mathcal{O} \left( \log(\mu) + d^2 + d\tau_L + d\tau \right).
\]
\end{corollary} 

\begin{proof} The result comes from substituting the bound on $\varepsilon$ from Lemma  \ref{thm:eps-univ-con} into the bound given in Theorem \ref{thm:univ-interp-condensed}.
\end{proof} 
}

\subsection{Multivariate polynomials over a bounded set} \label{sec:multi-interp}
In this section, we assume that $\Sigma = \Sigma_{n,\vd}^\vw$, with $\vw$ a set of weight polynomials describing a bounded set $S_\vw\subseteq \R^n$. We assume that, using the notation of Proposition \ref{thm:Nesterov}, $\vq$ is represented by a multivariate Lagrange interpolation basis.
Although in the multivariate case we can no longer rely on the fact that nonnegative polynomials are the same as WSOS polynomials with suitably chosen weights, the analysis in the multivariate case can be made largely identical to the univariate analysis of the previous section under an additional assumption that is only slightly stronger than assuming $S_\vw$ to be bounded \revision{(see also Remark \ref{rem:mild-assumptions} below)}.



\begin{theorem}
Suppose the $\vq$ basis polynomials (in the notation of Proposition \ref{thm:Nesterov}) are the Lagrange basis polynomials corresponding to the (unisolvent) interpolation points $\{\vz_1,\dots,\vz_U\}\subseteq S_\vw$. In addition, suppose that there is a $\mu>0$ such that $\mu + q_i\in\Sigma$ and $\mu - q_i\in\Sigma$ (for each $i=1,\dots,U$). Then the following hold:
\begin{enumerate} 
\item For $k_1^\vone$ defined as in \eqref{eq:k1def}, we have $k_1^\vone \geq \frac{1}{\mu}$.
\item The matrix $\vM$ given in \eqref{eq:mdef} can be chosen to be the identity matrix, with $\cond(\vM) = 1$.
\item For all $\vt \in \Sigma^\circ$ and $\varepsilon > 0$ with $\vt - \varepsilon\vone \in \bd(\Sigma)$, there exists an integer certificate $\overline{\vy}\in \Ssc \cap \zz^U$ for $\vt$ with
\begin{align*}
\|\overline{\vy}\|_\infty &\leq \frac{1}{2} +  \left\lceil\frac{3}{2}\sqrt{U}\|\vt\|_{2}\right\rceil\left(\frac{\mu\nu}{\varepsilon}\right) 
\end{align*}
whose largest component has bit size bounded as 
\[ 
\log(\|\overline{\vy}\|_{\infty}) \approx \mathcal{O} \left( \log(U) + \log(\|\vt\|_2) + \log(\mu) + \log\left(\nu \right) + \log(1/\varepsilon) \right).
\]
\end{enumerate}
\end{theorem}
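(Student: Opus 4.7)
The plan is to establish the three claims in order, mirroring the structure of Theorem \ref{thm:univ-interp-condensed}. The key difference from the univariate case is that the Lukács-type identification of nonnegative polynomials on $[-1,1]$ with WSOS polynomials is unavailable in the multivariate setting; the hypothesis $\mu \pm q_i \in \Sigma$ is precisely what compensates for this.

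For Part~1, I would first observe that the constant polynomial $1$ has coefficient vector $\vone$ in the Lagrange basis: the polynomial $\sum_j q_j$ lies in $\mathcal{V}$ and takes value $1$ at each unisolvent point $\vz_j$, so by unisolvency it equals the constant $1$. Consequently, the constant polynomial $\mu$ has coefficient vector $\mu\vone$, and $q_i$ has coefficient vector $\ve_i$; the hypothesis $\mu \pm q_i \in \Sigma$ therefore translates to $\mu\vone \pm \ve_i \in \Sigma$. For any $\vv \in \Sigma^*$, pairing with both vectors yields $\mu\vone^\T\vv \pm v_i \geq 0$, hence $\vone^\T\vv \geq |v_i|/\mu$. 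When $\|\vv\|_\infty = 1$, choosing an index $i$ with $|v_i| = 1$ gives $\vone^\T\vv \geq 1/\mu$, so $k_1^\vone \geq 1/\mu$.

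Part~2 is immediate from the defining property $q_j(\vz_i) = \delta_{ij}$ of the Lagrange basis: taking the interpolation points $\{\vz_1,\dots,\vz_U\}$ with $\alpha_i = 1$ in \eqref{eq:mdef} gives $\vM = \sum_{i=1}^U \vq(\vz_i)\vq(\vz_i)^\T = \sum_{i=1}^U \ve_i\ve_i^\T = \vI$, with $\cond(\vM) = 1$. Part~3 then follows by substituting $\cond(\vM) = 1$ and $k_1^\vone \geq 1/\mu$ into Theorem \ref{thm:bitsize-grad} with $\ve = \vone$, and the bit size bound is obtained by taking logarithms of the resulting inequality.

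The main obstacle is conceptual rather than technical: identifying the correct multivariate hypothesis. The assumption $\mu \pm q_i \in \Sigma$ is only slightly stronger than requiring $S_\vw$ to be bounded, since boundedness already ensures each $q_i$ is pointwise bounded on $S_\vw$. However, in the multivariate setting pointwise nonnegativity on $S_\vw$ does not imply membership in $\Sigma$, so a Positivstellensatz-style hypothesis is needed to convert the pointwise bound into conic membership. Once this assumption is in place, the argument is a direct translation of the univariate case that altogether bypasses the LP duality used in Theorem \ref{thm:univ-interp-condensed}.
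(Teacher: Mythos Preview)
Your proof is correct, and Parts~2 and~3 coincide with the paper's argument. For Part~1, however, you take a genuinely different and more elementary route than the paper. The paper declares the proof ``essentially identical'' to that of Theorem~\ref{thm:univ-interp-condensed}, which proceeds via the convex reformulation~\eqref{eq:k1e-convex}: it splits the nonconvex problem defining $k_1^\vone$ into $2U$ linear programs, passes to their duals, and exhibits $\pm\frac{1}{\mu}\ve_i$ as dual-feasible points with objective value $\frac{1}{\mu}$ (feasibility of $\vone - \vx \in \Sigma$ for $\vx = \pm\frac{1}{\mu}\ve_i$ is exactly the hypothesis $\mu \pm q_i \in \Sigma$ after rescaling). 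Your argument instead works directly on the primal: pairing any $\vv \in \Sigma^*$ with the WSOS polynomials $\mu\vone \pm \ve_i$ immediately yields $\mu\,\vone^\T\vv \geq |v_i|$, hence $\vone^\T\vv \geq \|\vv\|_\infty/\mu$. This is shorter, avoids the $2U$-problem decomposition and weak duality entirely, and as a bonus shows $\vone^\T\vv > 0$ for every nonzero $\vv \in \Sigma^*$, confirming $\vone \in \Sigma^\circ$ from the hypotheses alone. The paper's approach, on the other hand, illustrates how~\eqref{eq:k1e-convex} can be used systematically to bound $k_1^\ve$ in settings where such a clean primal pairing is not immediately available.

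One small wording issue: your final sentence calls the argument ``a direct translation of the univariate case that altogether bypasses the LP duality,'' but the univariate proof in Theorem~\ref{thm:univ-interp-condensed} \emph{is} the LP duality argument, so your Part~1 is a replacement rather than a translation.
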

\begin{proof}
The proofs of the first two statements are essentially identical to those in Theorem \ref{thm:univ-interp-condensed}. Substituting those two bounds into the formula given in Theorem \ref{thm:bitsize-grad} gives the third claim.
\end{proof}

\begin{remark} \label{rem:mild-assumptions}
The new assumption $\mu\pm q_i\in\Sigma$ is relatively mild, given that $S_\vw$ is bounded. Recall that if $S_\vw$ is bounded, then every strictly positive polynomial over $S_\vw$ belongs to the interior of the cone of polynomials nonnegative over $S_\vw$; in particular, the constant $1$ polynomial belongs to the interior. It is reasonable to assume that $1 \in \Sc$ holds as well (as it automatically does for many WSOS cones commonly encountered in applications), in which case $1 \pm \frac{1}{\mu}q_i \in \Sigma$ automatically holds for every large enough $\mu$.

The condition $1 \in \Sc$ plays an important role in the context of dual certificates. For example, it ensures that \emph{every} polynomial in $\operatorname{span}(\Sigma)$ has a WSOS lower bound \cite[Lemma 3.1]{DavisPapp2022}; as such, we will rely on it when we discuss exact-arithmetic algorithms to compute rational dual certificates in Section \ref{sec:computing}. As discussed in \cite[Theorem 3.7]{DavisPapp2022}, even in the case when this assumption does not hold, it is possible to extend $\Sigma$, with the inclusion of a single additional weight that is nonnegative on $S_\vw$, to satisfy this condition without changing $\operatorname{span}(\Sigma)$ (in particular, without increasing the degrees or invoking a Postivstellensatz).
\end{remark}

\deletethis{
\begin{proof}\phantom{}  \begin{enumerate} 
\item  {\color{red} This is pretty much identical to Proof of statement 1 in Theorem \ref{thm:univ-interp-condensed}, with $N$ instead of $\mu$.} Since we assume that the set $S_\vw$ is bounded, we know that $\vone \in \Sigma^\circ$. Recall that the optimization problem to compute $k_1^\vone$ can be solved by finding the minimum of the $2U$ convex optimization problems in \eqref{eq:k1e-convex} (with two such problems for each $i = 1,\dots,U$). Fix an $i$ with $1 \leq i \leq 
U$; then, the two (primal) optimization problems for this $i$  have as their respective duals 
 $\max \{\pm w_i + |w_i| - \|\vw\|_{1} \ | \ \vone - \vw \in \Sigma, \vw \in \rr^U\}$. (If, for a particular $i$ and sign $-$ or $+$, the primal is infeasible, then we ignore this case.) Since $N \mp q_i(z) \in \Sigma$, for each $i$, it follows that $1 \mp \frac{1}{N}q_i(z) \in \Sigma$ for each $i$. Hence,  $\vw = (0,\dots, \pm \frac{1}{N}, \dots, 0)$, with $\pm \frac{1}{N}$ in the $i$th coordinate, is a feasible solution to the dual problem with objective value $\frac{1}{N}$. Thus, $\frac{1}{N}$ is a lower bound for the objective value of the primal problem when the primal is feasible, so $k_1^\vone \geq \frac{1}{N}$.
 \item This follows from Theorem   \ref{thm:univ-interp-condensed}, Statement 2.
\item From Statement 1, we have $k_1^\vone \geq \frac{1}{N}$. Moreover, we know from Theorem \ref{thm:univ-interp-condensed}, Statement 2, that there exists a matrix $\mathbf{M}$ in the form of Eq. \eqref{eq:mdef} with cond$(\mathbf{M}) = 1$. Substituting these values into the formula given in Theorem \ref{thm:bitsize-grad} gives the result. \qedhere
\end{enumerate} 
\end{proof} 
}
\deletethis{
\begin{lemma}\label{thm:k1-int-mult} Suppose the $\vq$ basis polynomials are Lagrange interpolation polynomials. Suppose the quadratic module associated to $S_\vw$ is Archimedean. In particular, suppose that $N \in \nn$ is large enough so that $N \pm q_i(\vz) \in \Sigma$ for all $\vz \in S_\vw$ and $i = 1,\dots,U$. Then for $k_1^\vone$ defined as in \ref{eq:k1def},

we have $k_1^\vone \geq \frac{1}{N}$.  
\end{lemma} 

\begin{proof} Observe that 
\begin{equation}\label{eq:k1-interp-mult}
    k_1^\vone = \min_{i=1,\dots,U}\left( \min \{\vone^\T\vv \ | \ \vv \in \Sigma^*, v_i = \pm1, \|\vv\|_{\infty} \leq 1\right).
\end{equation}
 The two (primal) optimization problems in \eqref{eq:k1-interp-mult} have as their respective duals 
 $\max \{\pm w_i + |w_i| - \|\vw\|_{1} \ | \ \vone - \vw \in \Sigma, \vw \in \rr^U\}$. (If, for a particular $i$ and sign $-$ or $+$, the primal is infeasible, then we ignore this case.) Since $N \mp q_i(z) \in \Sigma$, for each $i$, it follows that $1 \mp \frac{1}{N}q_i(z) \in \Sigma$ for each $i$. Hence,  $\vw = (0,\dots, \pm \frac{1}{N}, \dots, 0)$, with $\pm \frac{1}{N}$ in the $i$th coordinate, is a feasible solution to the dual problem with objective value $\frac{1}{N}$. Thus, $\frac{1}{N}$ is a lower bound for the objective value of the primal problem when the primal is feasible, so $k_1^\vone \geq \frac{1}{N}$.

\deletethis{
Analgously, the (primal) optimization problem $\min \{\vone^\T\vv \ | \ \vv \in \Sigma^*, v_i = -1, \|\vv\|_{\infty} \leq 1\}$  has as its dual $\max \{-w_i + |w_i| - \|\vw\|_{1} \ | \ \vone - \vw \in \Sigma, \vw \in \rr^U\}$. 
Hence,  $\vw = (0,\dots, -\frac{1}{N}, \dots, 0)$, with $-\frac{1}{N}$ in the $i$th coordinate, is a feasible solution to the dual problem with objective value $\frac{1}{N}$. Thus, $\frac{1}{N}$ is a lower bound for the objective value of the primal problem when the primal is feasible. }

\end{proof} 

\deletethis{
\begin{lemma}\label{thm:k1-int-mult-alt} {\color{red} multivariate alternate} Suppose the $\vq$ basis polynomials for $\Sigma$  are Lagrange interpolation polynomials with interpolation points $\vt_1,\dots,\vt_U$. Choose $N$ large enough so that $N - q_i(\vz) \geq 0$ for all $\vz$ in $S_w$ and all $i = 1,\dots,U$. Then for $k_1^\vone$ defined by. 
\[
k_1^\vone = \min \{\ve^\T\vv \ | \ \vv \in \Sigma^*, \|\vv\|_{\infty} = 1\},
\]
we have $k_1^\vone \geq \frac{1}{N}$ .  
\end{lemma} 

\begin{proof} Define $\overline{\vw} = \{N - q_1(x), \dots, N - q_u(x)\} \cup \vw$, and by our choice of $N$, observe $S_\vw = S_{\overline{\vw}}$. {\color{red} Adding these weights changes $\nu$ to $\nu+U$. } So, we have $\Sigma_\vw$ (the WSOS polynomials on $S_\vw$) is equivalent to $\Sigma_{\overline{\vw}}$ (the WSOS polynomials on $S_{\overline{\vw}}$). 

Now, observe that \[ k_1^\vone = \min_{i=1,\dots,U}\left( \{\min \{\vone^\T\vv \ | \ \vv \in \Sigma^*, v_i = \pm1, \|\vv\|_{\infty} \leq 1\}\right)\]
The (primal) optimization problem $\min \{\vone^\T\vv \ | \ \vv \in \Sigma^*, v_i = 1, \|\vv\|_{\infty} \leq 1\}$ has as its dual $\max \{w_i + |w_i| - \|\vw\|_{1} \ | \ \vone - \vw \in \Sigma, \vw \in \rr^U\}$. (If, for a particular $i$, the primal is infeasible, then we ignore this case of $i$.) We know that, for each $i$, $N - q_i(z) \in \Sigma$. Hence $1 - \frac{1}{N}q_i(z) \in \Sigma$ for each $i$. Therefore,  $\vw = (0,\dots, \frac{1}{N}, \dots, 0)$, with $\frac{1}{N}$ in the $i$th coordinate, is a feasible solution to the dual problem with objective value $\frac{1}{N}$. Thus, $\frac{1}{N}$ is a lower bound for the objective value of the primal problem when the primal is feasible.

Analogously, the (primal) optimization problem $\min \{\vone^\T\vv \ | \ \vv \in \Sigma^*, v_i = -1, \|\vv\|_{\infty} \leq 1\}$  has as its dual $\max \{-w_i + |w_i| - \|\vw\|_{1} \ | \ \vone - \vw \in \Sigma, \vw \in \rr^U\}$.  Hence,  $\vw = (0,\dots, -\frac{1}{N}, \dots, 0)$, with $-\frac{1}{N}$ in the $i$th coordinate, is a feasible solution to the dual problem with objective value $\frac{1}{N}$. Thus, $\frac{1}{N}$ is a lower bound for the objective value of the primal problem when the primal is feasible. 

Hence, $k_1^\vone \geq \frac{1}{N}$. 

\end{proof} 
}

Now we give a result bounding the minimum of a positive multivariate polynomial over the simplex $\Delta_n$ from below {\color{red} Don't need simplex}.

\begin{theorem}\label{thm:yn-mult-int} Suppose that we use a multivariate Lagrange interpolant basis to represent all polynomials. Let $\vy \in \left(\Sigma^*\right)^\circ$ be the gradient certificate of a polynomial $\vt \in \Sigma^\circ$ with $\vt - \varepsilon\vone$ on the boundary of $\Sigma$. Suppose $N$ is chosen so that $N \pm q_i(\vz) \in \Sigma$ for all $\vz \in S_\vw$ and $i = 1,\dots,U$.  Then there exists an integer certificate $\overline{\vy}\in \Ssc \cap \zz^U$ for $\vt$ with  {\color{red} $\nu = \sum L_i$, keep $\nu$ and $U$ }
\begin{align*}
\|\overline{\vy}\|_{\infty} &\leq \mathcal{C}\left(N \binom{n+d}{d} \sqrt{\binom{n+2d}{n}} \frac{\|\vt\|_{2}}{\varepsilon} \right) \\
&\approx \mathcal{O}\left(N \cdot (n+d)^n \cdot (n+2d)^{n/2} \cdot \frac{\|\vt\|_2}{\varepsilon}  \right)
\end{align*}

for some absolute constant $\mathcal{C}$, and in logs: 

\[ 
\log(\|\overline{\vy}\|_{\infty}) \approx \mathcal{O} \left( \log(N) + n\log(n+2d) + \log(\|\vt\|_2) - \log(\varepsilon) \right) 
\]
\end{theorem} 

\begin{proof} Recall $k_1^\vone \geq \frac{1}{N}$ from \ref{thm:k1-int-mult}. Moreover, we know from Theorem \ref{thm:univ-interp-condensed}, Statement 2, that there exists a matrix $\mathbf{M}$ in the form of Eq. \eqref{eq:mdef} with cond($(\mathbf{M}) = 1$. Finally, we have $\nu =\binom{n+d}{d}$, $U = \binom{n+2d}{n}$. Substituting these values into the formula given in Theorem \ref{thm:bitsize-grad} gives the result.
\end{proof} 
}

\deletethis{
Now we give a result lower bounding the minimum of a positive multivariate polynomial over the simplex $\Delta_n$, which we will use to give a $\varepsilon$-free result of the Lemma above.

\begin{lemma}\label{thm:eps-multi}[\cite{JeronimoPerrucci2010}] Let $t(\cdot) \in \zz[\cdot]$ be a positive polynomial in $n$ variables of degree $2d$ over the simplex
\[
\Delta_n = \left\{\vz \in \rr_{\geq 0}^n \ | \ \sum_{i=1}^n z_i \leq 1\right\}.
\] Let $\tau$ denote the largest bit size of $t(\cdot)$ when represented in the monomial basis. Then 
\[
\min_{\Delta_n} t(\cdot) \geq 2^{-(\tau + 1)d^{n+1}}d^{-(n+1)d^{n+1}}.
\]
\end{lemma} 

Denote by $\tau_L$ the bit size of the largest coefficient of the $\vq$ polynomials represented in the monomial basis (dependent on the choice of interpolation points). Then a polynomial represented in the interpolant basis with largest coefficient whose bit size is equal to $\tau$  can be rewritten in the monomial basis with largest coefficient whose bit size is  no more than $U + \tau_L + \tau$.

{\color{red} Using $2^\tau = \|\vt\|_\infty$, again based on Chebyshev discussion. But if we need $2^{U+ \tau_L + \tau} = \|\vt\|_\infty$, then change $2^\tau$ to  $2^{U+ \tau_L + \tau}$. ????}

\begin{corollary} Suppose we employ the same assumptions as we do in Theorem \ref{thm:interp-mult-condensed}. If $\vt \in \Sigma^\circ \cap \zz^U$ is an $n$-variate weighted sum of squares of degree at most $2d+1$ which is positive over the simplex $\Delta_n$, then there exists an integer certificate $\overline{\vy}\in \Ssc \cap \zz^U$ for $\vt$ with {\color{red} ?? Maybe this can be simplified?}
\[
\|\overline{\vy}\|_\infty \leq \frac{1}{2} +  3(d+1)2^{\tau}\left(N\nu\right) \left( 2^{(U + \tau_L + \tau + 1 )(2d+1)^{n+1}}(2d+1)^{(n+1)(2d+1)^{n+1}}\right)
\]

with a largest component having bit size bounded as
\[
\log( \|\overline{\vy}\|_{\infty} ) \approx \mathcal{O} \left( \log(N) + \log(\nu) + (U + \tau_L + \tau )d^{n+1} + nd^{n+1}\log(d)\right). 
\]
\end{corollary} 

\begin{proof} The result comes from substituting the bound on $\varepsilon$ from \ref{thm:eps-multi} into the bound in Theorem \ref{thm:interp-mult-condensed}.
\end{proof} 
}

\section{Computing certified WSOS lower bounds in rational arithmetic}\label{sec:computing}
Having established the existence of rational dual certificates with a priori bounded bit sizes, we now turn to the question of computing such certificates. More precisely, given a polynomial $\vt$ and a tolerance $\varepsilon>0$, we want to compute a rational lower bound $c$ that lies between the optimal WSOS lower bound $c^*$ and $c^*-\varepsilon$, along with a rational dual certificate (of a small bit size) proving $\vt-c\vone\in\Sigma$.

The new algorithm (Algorithm \ref{alg:Newton2} below) is an adaptation of Algorithm~1 from \cite{DavisPapp2022}, which is a hybrid method for the solution of the same problem. Since that algorithm was designed to run in finite-precision floating point arithmetic, the quality of the lower bound is limited by the precision of the arithmetic (limiting how small $\varepsilon$ can be). That algorithm, as is, cannot be efficiently implemented in infinite precision (rational) arithmetic, because the bit sizes of the intermediate quantities (and the returned certificate) blow up as the algorithm progresses even if $\varepsilon$ is large. The new Algorithm \ref{alg:Newton2} follows the same blueprint, but rounds all intermediate quantities (lower bounds and certificates) to ``nearby'' rational ones with small denominators, while maintaining the desirable properties of the original algorithm that guarantee that the new algorithm also converges linearly to the optimal WSOS lower bound.

The algorithm works for almost any WSOS cone $\Sigma$; our only assumption is that $\vone\in\Sc$. This is a mild assumption that ensures that every polynomial has a WSOS lower bound; recall Remark \ref{rem:mild-assumptions}.

\subsection{The algorithm}\label{sec:algorithm}
\afterpage{
\begin{algorithm2e}
  \DontPrintSemicolon
  \SetKwInOut{Input}{input}
  \SetKwInOut{Params}{parameters}  
  \SetKwInOut{Output}{outputs}
  \Input{A polynomial $\vt$; a tolerance $\varepsilon>0$.}
  \Output{A lower bound $c$ on the optimal WSOS lower bound $c^*$ satisfying $c^*-c\leq \varepsilon$; a dual vector $\vx\in\Ssc$ certifying $\vt-c\vone\in\Sigma$.} 
  \Params{An oracle for computing the barrier Hessian $H$ for $\Sigma$; a radius $r\in(0,1/4]$, a radius $r_N$ satisfying $ \frac{r^2}{1-2r} < r_N < \frac{r}{1+2r}$, a certificate $\vx$ satisfying $\|-g(\vx) - \vone\|_{\vx}^* \leq \frac{r}{r+1}$.
  }

  \medskip

  Compute $c_0 := -\left(\frac{r}{r+1} - \|-g(\vx) - \vone\|_{\vx}^*\right)^{-1}\|\vt\|^*_{\vx}.$ Set $c := c_0$ and $\mathbf{x} := -\frac{1}{c_0}\vx$.
  
  \Repeat{$\Delta c \leq \frac{1}{2}\rho C\varepsilon$ \do}
  {
    Set $\vx_+ := 2\vx - H(\vx)^{-1}(\vt -c\vone )$. \label{line:x-update}
    
    Round $\vx_+$ component-wise to a point $\vx_N$ with denominators $N \defeq \Big\lceil\frac{\sqrt{U}}{2}\big(\frac{1 + r_N}{r_N - \frac{r^2}{1-2r}}\big)\|H(\vx_+)^{1/2}\|\Big\rceil$.
    \deletethis{{\color{red}
    $N \defeq \left\lceil\frac{\sqrt{U}}{2}\left(\frac{(1-2 r) (r_N+1)}{-r^2-2 r\cdot r_N+r_N}\right)\|H(\vx_+)^{1/2}\|\right\rceil$. Checked this. Alt $r,r_N$ constant (avoids cdot): $\frac{(2r-1)(r_N+1)}{r^2 + r_N(2r-1)}$, or alt which is straightforward: $\frac{1 + r_N}{r_N - \frac{r^2}{1-2r}}$}
    }\label{line:round-x}
    
    
    Solve for $c_+$ the scalar quadratic equation
    \[
    \|\vx_N - H(\vx_N)^{-1}(\vt - c_+\vone)\|_{\vx_N} = \frac{r}{r+1},
    \]\label{line:c-update}
and set $c_+$ equal to the larger of the two solutions. 
    
    Set $\overline{\Delta c} := c_+ - c$. Chose $c_+'$ to be the rational point in the interval $[c + \frac{\overline{\Delta c}}{2}, c_+]$ with the smallest possible denominator.  \label{line:c-round}
    
    Set $\Delta c := c_+' - c$.
    Set $c := c_+'$.
    Set $\vx := \vx_N$. \label{line:stopping}
    }
    \Return{$c$ and $\vx$.}
    \caption{Compute the best WSOS lower bound and a dual certificate}\label{alg:Newton2}
\end{algorithm2e}
} 

\deletethis{NOTE: In Algorithm \ref{alg:Newton2}, we need $\frac{r^2}{1-2r} \leq r_N$ because of Lemma \ref{thm:rounding}, and we need $r_N   \leq \frac{r}{1+2r}$ because of Lemma  \ref{thm:constant-update} -- see deleted lemmas in analysis. 
This works out when $0 < r < \frac{1}{r} \left( \sqrt{17} 3 \right) \approx .28$. Since we already have $r \leq \frac{1}{4}$, this is okay. 

Note that $r_N$ being chosen to be closer to $\frac{r^2}{1-2r}$  makes the denominators in Line \ref{line:round-x} huge (since the denominator is close to 0). On the other hand, choosing $r_N$ to be closer to $\frac{r}{1+2r}$ mean that we don't get much improvement in the $c$ to $c_+$ step (see Lemma \ref{thm:constant-update} {\color{red} although this Lemma is deleted}). 

Note that $\vx_N$ rounding step in Line \ref{line:round-x} uses Lemma \ref{thm:rounding}  with {\color{blue} $r_2 = r_N$ and $r_1 = \frac{r^2}{1-2r}$.}}

The pseudocode of the algorithm is shown in Algorithm \ref{alg:Newton2} below\revision{; see a detailed example of one iteration of the algorithm after the outline of its analysis, in Example \ref{ex:alg1}}. Throughout, $\vx$ represents a dual certificate for the polynomial $\vt-c\vone$, where $c$ is the current certified WSOS lower bound and $\vt$ is the input polynomial we wish to bound. In its main loop, the algorithm first updates the current certificate $\vx$ to be closer to the gradient certificate for the current $\vt-c\vone$ (Line~\ref{line:x-update})\revision{, by taking a single Newton step towards the solution of the nonlinear system $-g(\vx) = \vt-c\vone$}. This updated certificate is then rounded to a rational one with smaller denominators in Line~\ref{line:round-x}. \revision{(The matrix norm required for this calculation could be expensive to compute, but since any upper bound can be substituted for $\|H(\vx_+)\|$, one can get an acceptable rigorous bound by using the Frobenius norm instead, which is easy to compute exactly even in rational arithmetic.)} Then the lower bound $c$ is improved in Line~\ref{line:c-update} and rounded to a nearby rational bound that is still certified by the same rounded dual certificate (Line~\ref{line:c-round}). This last step can be implemented efficiently using continued fractions or Farey sequences. Alternatively, it could be replaced by a naive rounding to the largest number in the interval with denominator $\lceil 2/\overline{\Delta c}\rceil$, the analysis below remains valid even in that case.

In addition to the notation introduced in the algorithm, we will use the following notation throughout the rest of the section. We let $\vy$ be the vector satisfying $-g(\vy) = \vt - c\vone$ and $\vy_+$ be the vector satisfying $-g(\vy_+) = \vt - c_+ \vone$. The constants $\rho$ and $C$ used in the termination criterion will be precisely defined and justified below, in Theorem \ref{thm:algorithm}---for now, we may treat the main loop of the algorithm as an infinite loop.

The initialization of the algorithm requires a certificate $\vx$ satisfying \mbox{$\|-g(\vx) - \vone\|_{\vx}^* \leq \frac{r}{r+1}$.} This may be readily available, for example, when the gradient certificate of $\vone$ is known in closed form. (See \cite[Example 4]{DavisPapp2022} for an example.) If such a certificate $\vx$ is not known, we may run Algorithm~\ref{alg:initialization2} discussed in Section~\ref{sec:initialization} to  compute such a certificate. Note that this initial vector is independent of the coefficient vector $\vt$, and only needs to be computed once for every WSOS cone $\Sigma$.

The analysis of Algorithm \ref{alg:Newton2} here is similar to the analysis of Algorithm~1 in \cite[Section 3.3]{DavisPapp2022}. Therefore, we only give a concise outline of the proofs of its correctness and rate of convergence. We shall refer to the analysis of its predecessor whenever possible, focusing on where the analyses differ as a result of the rounding steps.

\begin{theorem}\label{thm:algorithm} Suppose that, at the beginning of the main loop of Algorithm~\ref{alg:Newton2}, $\|\vx-\vy\|_\vx \leq r$ for some $r<\frac{1}{4}$. Then:
\begin{enumerate} 
\item 
After Step \ref{line:x-update}, $\|\vx_+ - \vy\|_{\vx_+}  \leq \frac{r^2}{1 - 2r}$.
\item 
After Step \ref{line:round-x}, $\|\vx_N - \vy\|_{\vx_N} \leq r_N$. 
\item 
After Steps \ref{line:c-update} and \ref{line:c-round}, we have $c_+'>c$ and $\|\vx_{N} - \vy_+\|_{\vx_N} \leq r$, so $\|\vx-\vy\|_\vx \leq r$ also holds at the end of the loop, and the algorithm improves the lower bound $c$ in each iteration.
\end{enumerate}
Moreover, Algorithm \ref{alg:Newton2} is globally $q$-linearly convergent to $c^* = \max\{c \ | \ \vt - c\vone \in \Sigma\}$, the optimal WSOS lower bound for the polynomial $\vt$. More precisely, in each iteration of Algorithm \ref{alg:Newton2}, the improvement of the lower bound $\Delta c = c_+' - c$ satisfies
\begin{equation}\label{eq:linconvrate}
\frac{\Delta c}{c^* - c} \geq \frac{1}{2}\rho C, 
\end{equation}
with the absolute constant $\rho \defeq \frac{r}{r+1} - \frac{r_N}{1-r_N}$ and the $\Lambda$-dependent constant $C > 0$ defined as in Theorem 3.5 of \cite{DavisPapp2022}.
\end{theorem}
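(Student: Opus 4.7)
The approach is to leverage self-concordance of the barrier $f$ together with the rounding lemma (Lemma~\ref{thm:rounding}), essentially mimicking the analysis of Algorithm~1 in \cite{DavisPapp2022} but tracking the effects of rounding carefully. The three numbered statements amount to verifying, in sequence, that one Newton step, one componentwise rounding of the dual iterate, and one update-plus-rounding of the scalar lower bound each preserve the loop invariant $\|\vx - \vy\|_\vx \leq r$.

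\textbf{Statements 1 and 2.} Using the log-homogeneity identity $-H(\vx)^{-1}g(\vx) = \vx$, the update in Line~\ref{line:x-update} rewrites as $\vx_+ = \vx - H(\vx)^{-1}\bigl(g(\vx) + (\vt - c\vone)\bigr)$, which is exactly the Newton step for the nonlinear system $-g(\vy)=\vt-c\vone$. The classical quadratic-contraction estimate for self-concordant Newton iterates (cited in Lemma~\ref{thm:f-properties}) then delivers $\|\vx_+ - \vy\|_{\vx_+} \leq r^2/(1-2r)$, which is Statement~1. Statement~2 is an immediate application of Lemma~\ref{thm:rounding} with $r_1 = r^2/(1-2r)$ and $r_2 = r_N$: the hypothesis $r_1 < r_2$ is ensured by the admissible range imposed on $r_N$ in the input, and the denominator $N$ of Line~\ref{line:round-x} is chosen as the smallest integer satisfying \eqref{eq:roundingthm}.

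\textbf{Statement 3.} Define $h(c) \defeq \|\vx_N - H(\vx_N)^{-1}(\vt - c\vone)\|_{\vx_N}$. Combining $\vx_N = -H(\vx_N)^{-1}g(\vx_N)$ with $-g(\vy)=\vt-c\vone$ gives $h(c) = \|g(\vy) - g(\vx_N)\|_{\vx_N}^*$. The standard two-sided self-concordance bounds on gradient differences yield $\delta/(1+\delta) \leq h(c) \leq \delta/(1-\delta)$, with $\delta = \|\vy - \vx_N\|_{\vx_N}$. Applied at $c$ itself, together with Statement~2, this gives $h(c) \leq r_N/(1-r_N) < r/(r+1)$, where the last inequality is equivalent to the upper condition $r_N < r/(1+2r)$ on $r_N$. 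Now $h(\cdot)^2$ is a quadratic in $c$ whose leading coefficient equals $\vone^\T H(\vx_N)^{-1}\vone > 0$; hence the two real roots of $h(\cdot) = r/(r+1)$ strictly bracket $c$, so the larger root $c_+$ satisfies $c_+>c$ and the entire interval $[c,c_+]$ lies in $\{h \leq r/(r+1)\}$. For any $c_+'$ in this interval---and in particular for the rounded value produced in Line~\ref{line:c-round}---the \emph{lower} self-concordance bound run in reverse produces $\|\vy_+ - \vx_N\|_{\vx_N} \leq r$, completing the loop invariant.

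\textbf{Linear convergence.} The remaining task is to quantify $\Delta c$ in terms of $c^*-c$. Two ingredients combine: (i) the residual gap $h(c_+) - h(c) \geq r/(r+1) - r_N/(1-r_N) = \rho > 0$, guaranteed by the strict inequalities on $r_N$; and (ii) the conditioning argument in \cite[Theorem~3.5]{DavisPapp2022}, which uses $\vone\in\Sc$ to translate a residual gap of size $\rho$ into a lower-bound improvement $\overline{\Delta c} \geq \rho C (c^*-c)$ with the same $\Lambda$-dependent constant $C$ appearing in that theorem. The rounding rule in Line~\ref{line:c-round} ensures $\Delta c \geq \overline{\Delta c}/2$, which produces \eqref{eq:linconvrate}. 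The principal obstacle---and the reason the analysis of \cite{DavisPapp2022} cannot simply be quoted---is ensuring that the two rounding steps (dual certificate in Line~\ref{line:round-x}, scalar in Line~\ref{line:c-round}) are absorbed by the positive slack $\rho$ without degrading the $q$-linear rate; this is what pins down both the admissible window for $r_N$ and the factor $1/2$ in the rate constant.
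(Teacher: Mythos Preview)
Your proposal is correct and follows essentially the same route as the paper's own proof, which delegates Statements~1 and~3 to \cite[Lemmas~3.2--3.3]{DavisPapp2022} and the linear-convergence claim to \cite[Theorem~3.6]{DavisPapp2022}, invoking Lemma~\ref{thm:rounding} with $r_1=r^2/(1-2r)$, $r_2=r_N$ for Statement~2 exactly as you do. Your write-up simply unpacks those cited lemmas (the Newton contraction, the quadratic structure of $h(\cdot)^2$, and the two-sided gradient--distance bounds) rather than citing them; one small wording quibble is that the step ``$h(c_+')\le r/(r+1)\Rightarrow \|\vy_+-\vx_N\|_{\vx_N}\le r$'' uses the implication $\|\vu-\vx\|_\vx \le \|g(\vu)-g(\vx)\|_\vx^*/(1-\|g(\vu)-g(\vx)\|_\vx^*)$ rather than the lower bound $\delta/(1+\delta)\le h$ you label, but the conclusion is the same.
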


\begin{proof} Statement 1 is identical to \cite[Lemma 3.2]{DavisPapp2022}, and statement 2 comes from Lemma \ref{thm:rounding} with $r_1 = \frac{r^2}{1-2r}$ and $r_2 = r_N$.

Statement 3 is analogous to \cite[Lemma 3.3]{DavisPapp2022}, replacing $\vx_+$ therein with $\vx_N$ and making use of the fact that $\|\vx_N - \vy\|_{\vx_N} \leq r_N$ whenever \cite[Lemma 3.3]{DavisPapp2022} uses the inequality $\|\vx_+ - \vy\|_{\vx_+} \leq \frac{r^2}{1-2r}$. The fact that $c_+' > c$ comes from the fact that $c_+ > c$ \cite[Lemma 3.3]{DavisPapp2022} and the construction of $c_+'$.

The linear convergence result is analogous to \cite[Theorem 3.6]{DavisPapp2022}, with $c'_+$ playing the role of $c_+$. The rounding down of the lower bound $c_+$ to $c'_+$ in Step \ref{line:c-update} ensures that in spite of the rounding, the progress our Algorithm \ref{alg:Newton2} is at least half of what the progress would be without rounding (as in \cite[Algorithm 1]{DavisPapp2022}) The final inequality \eqref{eq:linconvrate} justifies the termination criterion in Line \ref{line:stopping}: if $\Delta c\leq \frac{1}{2}\rho C\varepsilon$, then the gap between the certified and the optimal lower bound is $c^*-c \leq \varepsilon$ as wanted.
\end{proof} 

\revision{
\begin{example}\label{ex:alg1}
Consider the polynomial $t(z_1,z_2) = 3z_1^2-6z_1 z_2+z_2^2+2z_1-z_2$. Its global minimum on the unit disk is $c^*\approx -1.70768680307$. We can certify a sequence of lower bounds $c<c^*$ by writing $t-c$ in the form
\[t(z_1,z_2) - c = \sigma_1(z_1,z_2) + (1-z_1^2-z_2^2)\sigma_2,\]
where $\sigma_1$ is a quadratic SOS polynomial and $\sigma_2$ is a nonnegative constant (a degree-$0$ SOS polynomial); i.e. by showing that $t-c\in\Sigma^{(1,w)}_{2,(2,0)}$, wherein $w$ is the weight polynomial  $(z_1,z_2)\mapsto (1-z_1^2-z_2^2)$.

Using the monomial basis to represent all polynomials, we have the coefficient vector $\vt =  (0,2,3,-1,-6,1)$, and it is straightforward to construct the $\Lambda$ operator: we have $U=\dim(\Sigma)=6$, $(L_1,L_2)=(3,1)$, and $\Lambda = \Lambda_1 \oplus \Lambda_2$, where
\begin{equation}\label{eq:ex-Lambda}
\Lambda_1(\vx) = \left(
\begin{array}{ccc}
 x_1 & x_2 & x_4 \\
 x_2 & x_3 & x_5 \\
 x_4 & x_5 & x_6 \\
\end{array}
\right),
\quad
\Lambda_2(\vx) = \left(
\begin{array}{c}
 x_1-x_3-x_6 \\
\end{array}
\right)
\end{equation}
Now the Hessian of the barrier function can be computed efficiently using the formula \eqref{eq:H}.

To initialize Algorithm \ref{alg:Newton2}, we need a vector $\vx_1$ sufficiently close to the gradient certificate of the constant one polynomial. In this simple example, we will use the gradient certificate itself, which can be computed in closed form and happens to be a rational vector, $\vx_1 = (4,0,\frac{4}{3},0,0,\frac{4}{3})$. This can be verified by direct computation: $-g(\vx_1) = (1,0,0,0,0,0) = \vone$.

In the main loop of the algorithm, we shall use the parameters $r=1/4$ and $r_N = 1/7$ in this example. The corresponding (very crude) initial lower bound $c_0 = -\frac{10}{3}\sqrt{\frac{410}{3}}\approx -38.97$ is not rational, but we can round it down to $c_0=-39$. We can also verify that this lower bound is indeed certified by $\vx_1$: using the definition of dual certificates, it suffices to compute $H(\vx_1)^{-1}(\vt-c_0\vone) =  \left( \frac{484}{3},\frac{16}{3},\frac{1532}{27},-\frac{8}{3},-\frac{16}{3},\frac{1436}{27}\right)$ and confirm that both $\Lambda_i$ from \eqref{eq:ex-Lambda} are indeed positive definite. For completeness, we can also generate an explicit WSOS representation using \eqref{eq:S-def}:
\[ t(z_1,z_2) + 39 = \begin{pmatrix}1 \\ z_1 \\ z_2\end{pmatrix}^\T
\begin{pmatrix}
 \frac{121}{12} & 1 & -\frac{1}{2} \\
 1 & \frac{383}{12} & -3 \\
 -\frac{1}{2} & -3 & \frac{359}{12} \\
\end{pmatrix}
\begin{pmatrix}1 \\ z_1 \\ z_2\end{pmatrix} + (1-z_1^2-z_2^2)\frac{347}{12}. \]

\noindent The first iteration of the algorithm proceeds as follows:
\begin{enumerate}
\item After computing $H(\vx_1)$, the updated certificate in Line \ref{line:x-update} is
\[\vx_+ = \left(\frac{452}{4563},\frac{8}{4563},\frac{1372}{41067},-\frac{16}{4563},\frac{16}{4563},\frac{1276}{41067}\right).\]
\item The denominator for the ``compressed'' certificate is $N=5029$; the computation of this involves calculating the updated Hessian, $H(\vx_+)$, the rest is trivial arithmetic. Rounding $\vx_+$ componentwise, the rounded certificate becomes
\[\vx_N = \frac{1}{5029}(498, 9, 168, -18, 18, 156).\]
\item To update the lower bound, we construct the scalar quadratic equation in Line \ref{line:c-update}; reusing the already computed Hessian, this is simple arithmetic. The equation can be written as
\[  29\,387\,195\,615\,576+1\,508\,777\,838\,050 c_+ + 19\,170\,557\,325 c_+^2  = 0, \]
whose larger root is approximately $-35.4$, meaning that (in the spirit of keeping the denominators as small as possible), we can update our lower bound to $c'_+ = -36$. Indeed, we have
\[ t(z_1,z_2) + 36 = \begin{pmatrix}1 \\ z_1 \\ z_2\end{pmatrix}^\T
\begin{pmatrix}
  s_1 & 1 & -\frac{1}{2} \\
 1 & s_2 & -3 \\
 -\frac{1}{2} & -3 & s_3 \\
\end{pmatrix}
\begin{pmatrix}1 \\ z_1 \\ z_2\end{pmatrix} + (1-z_1^2-z_2^2) s_4 \]
with $(s_1,s_2,s_3,s_4) = \frac{1}{344\,769}(3\,203\,164, 10\,242\,827, 9\,553\,289, 9\,208\,520)$.
\end{enumerate}

Although some of the coefficients appear frighteningly large for a toy example, it shall be emphasized that the explicit WSOS decompositions of $t-c$ need not be computed in the algorithm.

Continuing with the algorithm, the bit sizes of the dual certificates appear to grow linearly with the number of iterations, and (as predicted by the theory), the difference $c^*-c$ decreases exponentially with the number of iterations. For instance, after 200 iterations, the lower bound is  $-1\,579\,834/925\,131$, only about $2\cdot 10^{-10}$ away from the true minimum value.
\end{example}
}

\subsection{Initialization}\label{sec:initialization}
Algorithm \ref{alg:Newton2} requires a suitable certificate of $\vone$, the constant one polynomial, to initialize. Such a certificate may be available either in closed form (e.g., for cones of univariate polynomials nonnegative on an interval, the gradient certificate can be determined analytically \cite[Example 4]{DavisPapp2022}), or from ``preprocessing'', e.g., when other polynomials from the same space $\operatorname{span}(\Sigma)$ have already been bounded.

If we do not know a suitable $\vx$ to start with, then we could attempt to find the gradient certificate (or a rational approximation of it) by solving the system  $-g(\vx) = \vone$ by a general-purpose method for polynomial systems. Alternatively (and more efficiently), we can find an approximate solution to this system by numerically solving the convex optimization problem
\[ \min \{f(\vx) + \vone^\T \vx\,|\,\vx\in\Sigma^*\}. \]

Instead of these numerical approaches, we can also leverage Algorithm~\ref{alg:Newton2} itself to find a suitable initial point. Suppose we have an interior point $\vx \in \Ssc$, which is by definition the gradient certificate of the polynomial $\vs = -g(\vx)$. Then we can apply Algorithm~\ref{alg:Newton2} starting with this initial pair ``in reverse,'' computing a sequence of certificates for polynomials of the form $\vs+c\vone$ for \emph{increasing} values of $c$. The same certificates in turn certify $c^{-1}\vs+\vone$ as well, which is approximately the same as the polynomial $\vone$ when $c$ is large enough. The details of this approach are presented in Algorithm~\ref{alg:initialization2}. Its analysis largely follows the steps laid out in Theorem \ref{thm:algorithm}, with two minor adjustments regarding the progress and the termination criterion. The change in Line~\ref{line:init-c-round} guarantees that the stopping criterion can be met, as we show later in Lemma~\ref{thm:init-termination}.

\afterpage{
\begin{algorithm2e}
  \DontPrintSemicolon
  \SetKwInOut{Input}{input}
  \SetKwInOut{Output}{outputs}
  \SetKwInOut{Params}{parameters}
  \Input{A vector $\vx \in \Ssc$.}
  \Params{An oracle for computing the barrier Hessian $H$ for $\Sigma$; a radius $r\in(0,1/4]$, a radius $\frac{r^2}{1-2r} < r_N < \frac{r}{1+2r}$.}
  \Output{A certificate $\vx \in \Ssc$ satisfying $\|-g(\vx) - \vone\|_{\vx}^* \leq \frac{r}{r+1}$.}

  \medskip

  Compute $\vs := -g(\vx)$. Set $c:=0$. 
  
  \Repeat{$\|-g(c\vx) - \vone\|^*_{c\vx} \leq \frac{r}{r+1}$. \do}
  {
    Set $\vx_+ := 2\vx - H(\vx)^{-1}(\vs + c\vone )$. \label{line:init-x-update}
    
    Round $\vx_+$ component-wise to a point $\vx_N$ with denominators
    $N \defeq \Big\lceil\frac{\sqrt{U}}{2}\big(\frac{1 + r_N}{r_N - \frac{r^2}{1-2r}}\big)\|H(\vx_+)^{1/2}\|\Big\rceil$. \label{line:init-round-x}
    
    
    Solve for $c_+$ the scalar quadratic equation
    \[
    \|\vx_N - H(\vx_N)^{-1}(\vs + c_+\vone)\|_{\vx_N} = \frac{r}{r+1},
    \]\label{line:init-c-update}
    and set $c_+$ equal to the larger of the two solutions. 
    
    Set $\overline{\Delta c} := c_+ - c$. Chose $c_+'$ to be the rational point in the interval $[c + \frac{1}{2}\overline{\Delta c}, c + \frac{2}{3}\overline{\Delta c}]$ with the smallest possible denominator.  \label{line:init-c-round}
    
    Set $\Delta c := c_+' - c$.
    Set $c := c_+'$.
    Set $\vx := \vx_N$.
    \label{line:init-stopping}
}
    \Return{$c\vx$.}
    \caption{Initialization for Algorithm \ref{alg:Newton2}}\label{alg:initialization2}
\end{algorithm2e}
} 

Analogously to Section \ref{sec:algorithm}, we will let $\vy$ be the vector satisfying $-g(\vy) = \vs + c\vone$ and $\vy_+$ be the vector satisfying $-g(\vy_+) = \vs + c_+ \vone$ throughout this section, in addition to the notation introduced in Algorithm~\ref{alg:initialization2}. The algorithm can be initialized by any point in the interior of $\Sigma^*$, and by definition, at the beginning of the main loop of the algorithm, we have $\vx=\vy$, and therefore $\|\vx-\vy\|_\vx \leq r$. The proof of Theorem \ref{thm:algorithm} can be repeated almost verbatim to show the following.
\begin{theorem} Suppose that, at the beginning of the main loop of Algorithm~\ref{alg:initialization2}, $\|\vx-\vy\|_\vx \leq r$ for some $r<\frac{1}{4}$. Then:
\begin{enumerate} 
\item After Step \ref{line:init-x-update}, $\|\vx_+ - \vy\|_{\vx_+}  \leq \frac{r^2}{1 - 2r}$.
\item
After Step \ref{line:init-round-x}, $\|\vx_N - \vy\|_{\vx_N} \leq r_N$. 
\item After Steps \ref{line:c-update} and \ref{line:c-round}, we have $c_+'>c$ and $\|\vx_{N} - \vy_+\|_{\vx_N} \leq r$, so $\|\vx-\vy\|_\vx \leq r$ also holds at the end of the loop. The increase $\Delta c = c_+' - c$ satisfies
\begin{equation*}
\frac{\Delta c}{c - c^*} \geq \frac{1}{2}\rho C, 
\end{equation*}
with the same constants $\rho$ and $C$ as in Theorem~\ref{thm:algorithm}. Thus, the constant $c$ increases exponentially as the algorithm progresses.
\end{enumerate}
\end{theorem}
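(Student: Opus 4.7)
The plan is to mirror the proof of Theorem \ref{thm:algorithm} essentially verbatim, with the single sign/direction change that here $c$ increases (and $c - c^*$ decreases geometrically instead of $c^*-c$), plus a careful handling of the two-sided rounding interval introduced in Line~\ref{line:init-c-round}.

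For Statement 1, observe that the update in Line~\ref{line:init-x-update} can be rewritten as $\vx_+ = \vx - H(\vx)^{-1}(g(\vx) - g(\vy))$, using the log-homogeneity identity $\vx = -H(\vx)^{-1}g(\vx)$ from \eqref{eq:gH-identities} and the defining equation $-g(\vy)=\vs+c\vone$ of the gradient certificate. This is precisely one Newton step for the system $g(\cdot)=g(\vy)$, so the self-concordance-based contraction estimate used in Lemma~3.2 of \cite{DavisPapp2022} applies without change, giving $\|\vx_+-\vy\|_{\vx_+}\leq r^2/(1-2r)$. Statement 2 then follows by a direct application of Lemma~\ref{thm:rounding} with $r_1=r^2/(1-2r)$ and $r_2=r_N$; the choice of $N$ in Line~\ref{line:init-round-x} matches \eqref{eq:roundingthm} exactly.

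For Statement 3, the construction of $c_+$ in Line~\ref{line:init-c-update} guarantees that $\vx_N$ satisfies $\|\vx_N - H(\vx_N)^{-1}(\vs+c_+\vone)\|_{\vx_N} = r/(r+1)$. Because $\vy_+$ is the gradient certificate of $\vs+c_+\vone$, the same self-concordance argument appearing in Lemma~3.3 of \cite{DavisPapp2022} (with $\vx_+$ there replaced by $\vx_N$ and using the bound from Statement 2 as input) upgrades this to $\|\vx_N-\vy_+\|_{\vx_N}\leq r$, closing the invariant. The strict inequality $c_+>c$ is obtained by the identical sign argument in that same lemma, and since $c'_+\geq c+\tfrac12\overline{\Delta c}$, we get $c'_+>c$ for free. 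For the geometric-decrease estimate, let $\overline{c}_+=c+\overline{\Delta c}$ be the unrounded update; repeating the derivation of Theorem~3.5 of \cite{DavisPapp2022} but interpreting ``distance to the boundary of $\Sigma$ along the ray $c\mapsto\vs+c\vone$'' as $c-c^*$ rather than $c^*-c$ yields $\overline{\Delta c}/(c-c^*)\geq \rho C$, and choosing $c'_+$ in the upper half of $[c,\overline{c}_+]$ retains at least half of this progress: $\Delta c/(c-c^*)\geq \tfrac12\rho C$, so $c-c^*$ decays geometrically.

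The main obstacle, and the one genuine departure from Theorem \ref{thm:algorithm}, is the two-sided rounding interval $[c+\tfrac12\overline{\Delta c},\,c+\tfrac23\overline{\Delta c}]$ in Line~\ref{line:init-c-round}, compared to the one-sided choice $[c+\tfrac12\overline{\Delta c},\,c_+]$ in Algorithm~\ref{alg:Newton2}. The upper bound is what prevents $c$ from overshooting in a way that would ruin the termination test $\|-g(c\vx)-\vone\|^*_{c\vx}\leq r/(r+1)$; the fact that this cap is compatible with all the estimates above is exactly what the proof of Statement 3 demands, and the verification that the stopping criterion is eventually met is then separated out into Lemma~\ref{thm:init-termination}.
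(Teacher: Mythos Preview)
Your proposal is correct and follows exactly the paper's approach: the paper's own proof of this theorem is simply ``the proof of Theorem~\ref{thm:algorithm} can be repeated almost verbatim,'' and you carry this out in the expected way, citing Lemma~3.2 and Lemma~3.3 of \cite{DavisPapp2022} for Statements~1 and~3, Lemma~\ref{thm:rounding} with $r_1=r^2/(1-2r)$, $r_2=r_N$ for Statement~2, and the rate argument from \cite{DavisPapp2022} for the progress bound. One small slip: from $\Delta c/(c-c^*)\geq \tfrac12\rho C$ you conclude that ``$c-c^*$ decays geometrically,'' but since $\Delta c>0$ and $c^*$ is fixed, $c-c^*$ \emph{grows} geometrically (equivalently, $c\to\infty$ exponentially, which is what the theorem actually asserts).
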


By definition, if Algorithm \ref{alg:initialization2} terminates, it returns a vector that can be used as an initial vector $\vx$ in Algorithm \ref{alg:Newton2}. It only remains to show that the algorithm indeed terminates. We shall show this in two steps. First, we show that as the algorithm progresses and $c$ increases, $\|\vs\|^*_{\vx}$ tends to zero. Then we argue that this ensures that the polynomial $c^{-1}\vs+\vone$ is eventually ``close enough'' to $\vone$ that $c\vx$ is sufficiently close to the gradient certificate of $\vone$. 

\begin{lemma}\label{thm:init-termination} Let $\vx \in \Ssc$ be the certificate of the polynomial $\vs + c\vone \in \Sc$ as defined in Algorithm \ref{alg:initialization2}. Then 
\begin{enumerate} 
\item $\|\vs\|^*_{\vx}$ tends to $0$, and 
\item Algorithm \ref{alg:initialization2} terminates. 
\end{enumerate}
\end{lemma}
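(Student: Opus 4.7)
For Statement 1, the plan is to exploit the log-homogeneity of the barrier function. Define the rescaled vector $\vy^* := c\vy$; by the identity $g(\lambda\vu) = g(\vu)/\lambda$ recorded in Lemma~\ref{thm:f-properties}, we have $-g(\vy^*) = -g(\vy)/c = (\vs + c\vone)/c = \vs/c + \vone$, so $\vy^*$ is the gradient certificate of $\vs/c + \vone$. The preceding theorem establishes that $c\to\infty$ as the algorithm progresses, whence $\vs/c + \vone \to \vone$; by continuity of the gradient bijection $-g:\Ssc\to\Sc$ (Lemma~\ref{thm:f-properties}), we get $\vy^* \to \vx^*$, where $\vx^*\in\Ssc$ is the gradient certificate of $\vone$. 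Using the companion identity $H(\vy) = c^2 H(\vy^*)$ (log-homogeneity of $H$), we compute
\[ \|\vs\|_\vy^* = \|H(\vy)^{-1/2}\vs\| = \frac{1}{c}\|H(\vy^*)^{-1/2}\vs\|, \]
and the second factor is bounded because $H(\vy^*)^{-1/2} \to H(\vx^*)^{-1/2}$, a fixed finite matrix. Hence $\|\vs\|_\vy^* = O(1/c) \to 0$. Because the preceding theorem guarantees the invariant $\|\vx - \vy\|_\vx \leq r < 1$ throughout the algorithm, self-concordance of $f$ yields $\|\vs\|_\vx^* \leq (1-r)^{-1} \|\vs\|_\vy^*$, which also tends to zero, proving Statement~1.

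For Statement 2, the plan is first to rewrite the stopping criterion using log-homogeneity and the substitution $c\vone = -g(\vy) - \vs$:
\[ \|-g(c\vx) - \vone\|_{c\vx}^* = \|-g(\vx) - c\vone\|_\vx^* = \|(g(\vy) - g(\vx)) + \vs\|_\vx^*. \]
The right-hand side is bounded by $\|g(\vy) - g(\vx)\|_\vx^* + \|\vs\|_\vx^*$, and the standard self-concordance estimate $\|g(\vy) - g(\vx)\|_\vx^* \leq \|\vy-\vx\|_\vx/(1 - \|\vy-\vx\|_\vx)$ controls the first term once a suitably tight distance bound for $\vx$ and $\vy$ is available. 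The crucial structural observation is that the strict parameter choice $r_N < r/(1+2r)$ implies the strict inequality $r_N/(1-r_N) < r/(r+1)$, leaving a positive slack $\delta := r/(r+1) - r_N/(1-r_N)$. By Statement~1, $\|\vs\|_\vx^* \to 0$, so eventually $\|\vs\|_{\vx_N}^* \leq \delta$, at which point combining the two bounds yields the stopping condition after finitely many iterations.

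The main obstacle will be to handle the interaction between the two rounding operations inside the loop: the rounding $\vx_+ \to \vx_N$ (controlled by $r_N$) and the scalar rounding $c_+ \to c_+'$ (which perturbs the analysis by a term proportional to $(c_+ - c_+')\|\vone\|_{\vx_N}^*$). The plan is to show that both perturbations fit inside the slack $\delta$: the vector rounding already does so by the strict choice of $r_N$, and the scalar perturbation is controlled using the log-homogeneous scaling $\|\vone\|_{\vx_N}^* = O(1/c)$ together with the interval $[c + \overline{\Delta c}/2,\, c + 2\overline{\Delta c}/3]$ from Line~\ref{line:init-c-round}, which guarantees $c_+'$ lies in the interior of $(c,c_+)$ so that the intermediate Newton decrement at $c_+'$ (for the target $\vs + c_+'\vone$) is strictly below $r/(r+1)$. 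Verifying this quantitative compatibility is the last step needed to conclude termination.
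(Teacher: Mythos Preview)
Your treatment of Statement~1 is fine and matches the paper's argument in spirit: both scale by $c$, use log-homogeneity to reduce to a neighbourhood of the gradient certificate $\vx_1$ of $\vone$, and then transfer from $\vy$ to $\vx$ via self-concordance. (Minor nit: from the invariant $\|\vx-\vy\|_\vx\le r$ you only get $\|\vx-\vy\|_\vy\le r/(1-r)$, so the constant in your last inequality should be $(1-r/(1-r))^{-1}$ rather than $(1-r)^{-1}$; this is harmless for the conclusion.)

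For Statement~2 your decomposition is exactly the paper's: writing the stopping quantity as $\|-g(\vx)-(\vs+c\vone)\|_\vx^*+\|\vs\|_\vx^*$. The difficulty is in how you bound the first term after the $c$-update. Two of your proposed routes do not close the argument:
\begin{itemize}
\item The self-concordance estimate $\|g(\vy)-g(\vx)\|_\vx^*\le \|\vy-\vx\|_\vx/(1-\|\vy-\vx\|_\vx)$ applied with the end-of-loop invariant $\|\vx-\vy\|_\vx\le r$ only yields $r/(1-r)$, which exceeds $r/(r+1)$. The tighter bound $\|\vx_N-\vy\|_{\vx_N}\le r_N$ holds \emph{before} the $c$-update, for the old $\vy$, and is lost once $c$ is replaced by $c_+'$.
\item Your perturbation estimate $(c_+-c_+')\|\vone\|_{\vx_N}^*$ combined with $\|\vone\|_{\vx_N}^*=O(1/c)$ does not tend to zero: since $c$ grows geometrically, $c_+-c_+'\ge \tfrac{1}{3}\overline{\Delta c}$ is of order $c$, so the product stays bounded away from $0$.
\end{itemize}

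The clean fix --- which is what the paper uses, albeit tersely --- avoids the self-concordance inequality altogether. The first term equals the Newton decrement \emph{exactly}:
\[
\|-g(\vx_N)-(\vs+c'\vone)\|_{\vx_N}^* \;=\; \|\vx_N-H(\vx_N)^{-1}(\vs+c'\vone)\|_{\vx_N} \;=:\; \phi(c').
\]
Since $\phi$ is the $\vx_N$-norm of an affine function of $c'$, it is convex. With $c_+'=(1-t)c_{\mathrm{old}}+tc_+$ and $t\le 2/3$ (the point of the upper endpoint in Line~\ref{line:init-c-round}), together with $\phi(c_{\mathrm{old}})\le r_N/(1-r_N)$ and $\phi(c_+)=r/(r+1)$, convexity gives the \emph{uniform} bound
\[
\phi(c_+') \;\le\; \tfrac{1}{3}\,\frac{r_N}{1-r_N}+\tfrac{2}{3}\,\frac{r}{r+1} \;=\; \frac{r}{r+1}-\frac{\delta}{3},
\qquad \delta:=\frac{r}{r+1}-\frac{r_N}{1-r_N}>0.
\]
Now Statement~1 furnishes an iteration at which $\|\vs\|_{\vx_N}^*\le\delta/3$, and the stopping criterion holds. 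This is the ``quantitative compatibility'' you left open; the missing ingredient is convexity of $\phi$ and the role of the $2/3$ endpoint, not the $O(1/c)$ scaling.
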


\begin{proof} We begin with the first statement. Let $\vx_1$ be the gradient certificate of $\vone$ and fix an arbitrary $a\in(0,1)$. Let $\vy$ be the gradient certificate of $\vs + c\vone$. By \cite[Lemma 3.1]{DavisPapp2022}, we know that for every sufficiently large $c$,
\[\|c^{-1}\vx_1 - \vy\|_{c^{-1}\vx_1} \leq a.\] 
Then, using inequality \eqref{eq:lemma4} from the Appendix and the fact that $\|\vs\|_{c^{-1}\vx_1}^* = c^{-1}\|\vs\|_{\vx_1}^*$, we have 
\begin{equation}\label{eq:alg-init-c-exists}
    \|\vs\|_{\vy}^* \overset{\eqref{eq:lemma4}}{\leq} \frac{\|\vs\|_{c^{-1}\vx_1}^*}{1 - \|c^{-1}\vx_1 - \vy\|_{c^{-1}\vx_1}  } \leq \frac{c^{-1}\|\vs\|_{\vx_1}^*}{1 - a}.
\end{equation}
Using inequality \eqref{eq:lemma4} again, we have
\begin{equation}\label{eq:alg-init-c-exists-2}
    \|\vs\|_{\vx}^* \overset{\eqref{eq:self-concordance}}{ \leq} \frac{\|\vs\|_{\vy}^*}{1 - \|\vx - \vy\|_{\vy}}  \leq \frac{\|\vs\|_{\vy}^*}{1 - r}.
\end{equation}
Therefore, we have
\[
\|\vs\|_{\vx}^* \overset{\eqref{eq:alg-init-c-exists},\eqref{eq:alg-init-c-exists-2}}{\leq} \frac{c^{-1}\|\vs\|_{\vx_1}^*}{(1 - r)(1-a)}.
\]
Since $a$ can be chosen to be arbitrarily close to $0$, and since $r$ and $\|\vs\|_{\vx_1}^*$ are constants, it follows that as $c \to \infty$, $\|\vs\|_{\vx}^* \to 0$. 

Now, we show that Algorithm \ref{alg:initialization2} terminates. At the end of each iteration, we have
\begin{equation*}
\begin{aligned}
\left\|-g(c\vx) - (c^{-1}\vs + \vone)\right\|_{c\vx}^* &\overset{\eqref{eq:log-homogeneity}}{=}
\|-g(\vx) - (\vs + c\vone)\|_{\vx}^* \\ &\overset{\eqref{eq:gH-identities}
}{=}
\|\vx - H(\vx)^{-1}(\vs + c\vone)\|_\vx \overset{\text{Line } \ref{line:init-c-round}}{<} \frac{r}{r+1}. 
\end{aligned}
\end{equation*}
Thus, we have
\begin{equation*}
\begin{aligned}
    \|-g(c\vx) - \vone\|_{c\vx}^* &\leq \left\|-g(c\vx) - (c^{-1}\vs + \vone)\right\|_{c\vx}^* + \left\|(c^{-1}\vs + \vone) - \vone\right\|_{c\vx}^* \\
    &< \frac{r}{r+1} +\left\|\vs\right\|_{\vx}^*.
\end{aligned}
\end{equation*}
By Statement 1, $\|\vs\|^*_{\vx}$ tends to 0, so eventually the stopping criterion is satisfied. \end{proof}

\deletethis{
\begin{lemma}\label{thm:init-terminates} Algorithm \ref{alg:initialization2} terminates after finitely many iterations.
\end{lemma} 
\begin{proof} 
At the end of each iteration, we have
\begin{equation}\label{eq:initial-1}
\begin{aligned}
\left\|-g(c\vx) - (c^{-1}\vs + \vone)\right\|_{c\vx}^* &\overset{\eqref{eq:log-homogeneity}}{=}
\|-g(\vx) - (\vs + c\vone)\|_{\vx}^* \\ &\overset{\eqref{eq:gH-identities}
}{=}
\|\vx - H(\vx)^{-1}(\vs + c'_+\vone)\|_\vx \overset{\text{Line } \ref{line:init-c-round}}{<} \frac{r}{r+1}. 
\end{aligned}
\end{equation}
Thus, we have
\begin{equation*}
\begin{aligned}
    \|-g(c\vx) - \vone\|_{c\vx}^* &\leq \left\|-g(c\vx) - (c^{-1}\vs + \vone)\right\|_{c\vx}^* + \left\|(c^{-1}\vs + \vone) - \vone\right\|_{c\vx}^* \\
    &< \frac{r}{r+1} +\left\|\vs\right\|_{\vx}^*.
\end{aligned}
\end{equation*}
By the prev Lemma, $\|\vs\|^*_{\vx}$ on the RHS tends to 0, so eventually the stopping criterion is satisfied.
\end{proof} 
}
\deletethis{
\subsection{What we had before}

\david{OLD: We omit the analysis of $\vx$ update and $c$ update steps for this algorithm, since it is identical to the analysis given in Section \ref{sec:algorithm}. The minor adjustment to the $c$ update step in Line \ref{line:init-c-update} does not affect the analysis in Lemmas \ref{thm:xtoxplus} and \ref{thm:constant-update}, but it may affect the progress of the algorithm. Since the initialization is only needed once per $\Sigma$, we do not worry about the convergence rate here.}

The algorithm terminates when $c$ is ``sufficiently'' large (sufficient here meaning that the outputted $\vx$ can be converted to an initializer for Algorithm \ref{alg:Newton2}, as discussed in Lemma \ref{thm:initial} later). Specifically, it terminates when $\|\vs + c\vone - c\vone\|_{\vx_c}^*  = \|\vs\|_{\vx_c}^* \leq R$  and $\vx_c$ a certificate of $\vs + c\vone$. Since $R$ is required to be less than one, it follows that $\vx_c$ certifies $\vone$.  The following technical lemma guarantees that there exists a $c$ such that the stopping criterion of Algorithm \ref{alg:initialization} can be met.  

\begin{lemma}\label{thm:stop-criterion} Let $\vs \in \Sigma^\circ$. Let $r$ and $R$ be a  real numbers such that $0< R$ and $r < \frac{1}{4}$. Then there exists a constant $c > 0$ such that $\|\vs\|_{\vx_c}^* \leq R$ wherein $\vx_c \in \Sigma^*$ is any certificate of $\vs + c\vone$ satisfying $\|\vx_c - \vy_c\|_{\vy_c} \leq r$, with $\vy_c$ the gradient certificate of $\vs + c\vone$. \end{lemma} 

\begin{proof} Let $\vx_1$ be the gradient certificate of $\vone$ and fix an arbitrary $a\in(0,1)$. By \cite[Lemma 3.1]{DavisPapp2022}, we know that for every sufficiently large $c$,
\[\|c^{-1}\vx_1 - \vy_c\|_{c^{-1}\vx_1} \leq a.\] 
Then, using inequality \eqref{eq:lemma4} {\color{red} and the fact that $\|\vs\|_{c^{-1}\vx_1}^* = c^{-1}\|\vs\|_{\vx_1}^*$}, we have 
\deletethis{
\begin{equation}\label{eq:alg-init-c-exists}
    \|\vs\|_{\vy_c}^* \overset{\eqref{eq:lemma4}}{\leq} \frac{\|\vs\|_{c^{-1}\vx_1}^*}{1 - \|c^{-1}\vx_1 - \vy_c\|_{c^{-1}\vx_1}  } \leq \frac{\frac{a}{a+1}}{1 - a} = \frac{a}{1-a^2}.
\end{equation}}
{\color{red}
\begin{equation}\label{eq:alg-init-c-exists}
    \|\vs\|_{\vy_c}^* \overset{\eqref{eq:lemma4}}{\leq} \frac{\|\vs\|_{c^{-1}\vx_1}^*}{1 - \|c^{-1}\vx_1 - \vy_c\|_{c^{-1}\vx_1}  } \leq \frac{c^{-1}\|\vs\|_{\vx_1}^*}{1 - a}.
\end{equation}
}
Using inequality \eqref{eq:lemma4} again, we have
\begin{equation}\label{eq:alg-init-c-exists-2}
    \|\vs\|_{\vx_c}^* \overset{\eqref{eq:self-concordance}}{ \leq} \frac{\|\vs\|_{\vy_c}^*}{1 - \|\vx_c - \vy_c\|_{\vy_c}}  \leq \frac{\|\vs\|_{\vy_c}^*}{1 - r}.
\end{equation}
Therefore, we have
{\color{red}
\[
\|\vs\|_{\vx_c}^* \overset{\eqref{eq:alg-init-c-exists},\eqref{eq:alg-init-c-exists-2}}{\leq} \frac{c^{-1}\|\vs\|_{\vx_1}^*}{(1 - r)(1-a)}.
\]
}
Since $a$ can be chosen to be arbitrarily close to $0$, {\color{red} and since $r$ and $\|\vs\|_{\vx_1}^*$ are constants, it follows that as $c \to \infty$, $\|\vs\|_{\vx_c}^* \to 0$. Thus,} it follows that there exists a $c > 0$ such that $\|\vs\|_{\vx_c}^* \leq R$, for any $R > 0$. 
\end{proof} 

\deletethis{
{\color{red} I don't think Alg 2 outputs a certificate $\vx_c$ with $\|\vx_c - \vy_c\|_{\vy_c} \leq r'$ for any choice of $r$ and $R$. We \textit{do} have \[
\|\vx_c - \vy_c\|_{\vy_c} \leq \frac{\|\vx_c - \vy_c\|_{\vx_c}}{1 - \|\vx_c - \vy_c\|_{\vx_c}} \leq \frac{r}{1-r}.
\]
So, to guarantee that $\|\vx_c - \vy_c\|_{\vy_c} \leq r'$ in our Algorithm 2, we need 
\begin{enumerate} 
\item $r' \geq \frac{r}{1-r}$ (by the above centered inequalities)
\item $0 < r \leq \frac{1}{4}$ (this is already required by the Algorithm) 
\item $2R \neq r$ (since $r - 2R$ appears in the denominator of $r'$)
\item $r/(r+1) - R > 0$ (by Line \ref{line:init-c-update} of Alg \ref{alg:initialization})
\item $R \leq 1$ (so that $\vx_c$ is a certificate of $\vone$, using Theorem 4.2 from \cite{DavisPapp2022}),
\end{enumerate} 
which all simplifies in Mathematica to the conditions $0 < R < \frac{1}{8}$ and $2R < r \leq \frac{1}{4}$. } }
Having established that a constant $c$ for the stopping criterion in Algorithm \ref{alg:initialization} to be met does exist, we now show that Algorithm \ref{alg:initialization} terminates. 

\begin{lemma} Algorithm \ref{alg:initialization} terminates.
\end{lemma} 

\begin{proof} From Lemma \ref{thm:cwithround}, we know that $c_+' - c \geq \frac{\rho}{\|\vone\|_{\vy}^*}$. So it suffices to bound $\frac{\rho}{\|\vone\|_{\vy}^*}$ from below by a constant. As $\rho$ is constant, we just need to bound $\|\vone\|_{\vy}^*$ from above by a constant. Let $\vy$ be the gradient certificate of $\vs + c\vone$. Using Theorem 3.5 from \cite{DavisPapp2022}, and noting that the $\alpha$ from Theorem 3.5 in our context is at least $c$, since $\vs$ is assumed to be in the interior, we have 
\[
\|\vone\|_{\vy}^* \leq \frac{k_3\nu\|\vone\|}{k_1^\vone k_2c}.
\]
Thus, we have
\[
c_+' - c \geq \frac{\rho}{\|\vone\|_{\vy}^* }\geq \rho \frac{k_1^\vone k_2 c}{k_3\nu \|\vone\|}.
\]
Now, defining $\Delta c \defeq c_+' - c$ and $C' \defeq  \rho \frac{k_1^\vone k_2 c}{k_3\nu \|\vone\|}$, we have
\[
\Delta c \geq C'c. 
\] Observe $c_+' = c + \Delta c$. Thus, we have for $k \geq 1$, 
\begin{equation}
    \begin{split}
        \Delta c_k &\geq C' (c_{k-1} + \Delta c_{k-1}) \\
        &\geq C' (c_{k-2} + \Delta c_{k-2} + C'(c_{k-2} + \Delta c_{k-2})) \\
        &... \\
        &\geq \sum_{i=1}^k(C')^i(c_0 + \Delta c_0) \\
        &\geq C'(c_0 + \Delta c_0).
    \end{split}
\end{equation}
Hence $\Delta c_i \not\rightarrow 0$, so $\sum_{i=1}^\infty \Delta c_i \to \infty$. As $c_k = c_0 + \sum_{i=1}^k \Delta c_i$, it follows that $c_k \to \infty$. It follows from Lemma \ref{thm:stop-criterion} that there exists a $k$ sufficiently large such that Algorithm \ref{alg:initialization} converges. 
\end{proof}

Finally, we show that the output of Algorithm \ref{alg:initialization} yields a certificate vector which we can use to initialize Algorithm \ref{alg:Newton2}. 

\begin{lemma}\label{thm:initial} Suppose Algorithm \ref{alg:initialization} with input $\vs \in \Sigma^\circ$ and parameter $R$ terminates with outputs $c$ and $\vx$, so $\vx$ certifies $\vs + c\vone$. \deletethis{ Then $c\vx \in \mathcal{C}(\vt + c'\vone)$ for all 
\[
c' \geq \frac{\|\vt\|_{c\vx}^*}{\frac{r}{r+1} - \|-g(c\vx) - \vone\|_{c\vx}^*}.
\]
}
Then $\|g(c\vx) - \vone\|_{c\vx}^* \leq \frac{r}{1+r}$. In other words,  $c\vx$ works as an initial certificate for $\vone$ as required  by Algorithm \ref{alg:Newton2}. 
\end{lemma} 
\begin{proof} 
We have
\begin{equation}\label{eq:initial-1}
\left\|-g(c\vx) - \frac{\vs + c\vone}{c}\right\|_{c\vx}^* \overset{\eqref{eq:log-homogeneity}}{=} \|-g(\vx) - (\vs + c\vone)\|_{\vx}^* \overset{\eqref{eq:gH-identities}
}{=}\|\vx - H(\vx)^{-1}(\vs + c_+\vone)\|_\vx \overset{\text{Line } \ref{line:init-c-update}}{\leq} \frac{r}{r+1} - R. 
\end{equation}
Thus, we have
\begin{align*}
    \|\-g(c\vx) - \vone\|_{c\vx}^* &\leq \left\|-g(c\vx) - \frac{\vs + c\vone}{c}\right\|_{c\vx}^* + \left\|\frac{\vs+c\vone}{c} - \vone\right\|_{c\vx}^* \\
    &\overset{\eqref{eq:initial-1}, \eqref{eq:log-homogeneity}}{=} \frac{r}{r+1} - R +\left\|\vs\right\|_{\vx}^* \\
    &\overset{\text{Line } \ref{line:init-stopping}}{\leq} \frac{r}{r+1} - R + R \\
    &= \frac{r}{1+r}. \qedhere
\end{align*}
\deletethis{Then the result follows from Lemma 3.1 from \cite{DavisPapp2022}. }
\end{proof}

\begin{lemma} Suppose $\vx$ certifies $\vs + c\vone$, and suppose $\vy$ is the gradient certificate of $\vs + c\vone$. Suppose we take one additional Newton step $(\vx \to \vx_+)$ and one rounding step $(\vx \to \vx_N)$, and we have $\|\vs\|_{\vx_N}^* \leq \frac{r}{r+1} - \left\|-g(\vx_N) - (\vs + c\vone)\right\|_{\vx_N}^*$. {\color{red} Could this could be termination criteria??} Then $c\vx_N$ works as an initial certificate for $\vone$ as required by Algorithm \ref{alg:Newton2}. 
\end{lemma} 

\begin{proof} Suppose $\vx$ certifies $\vs + c\vone$. Suppose $\vy$ is the gradient certificate of $\vs + c\vone$. Then $\vx_N$ also certifies $\vs + c\vone$. We have
\small
\begin{equation}\label{eq:initial-11}
\left\|-g(c\vx_N) - \frac{\vs + c\vone}{c}\right\|_{c\vx_N}^* \overset{\eqref{eq:log-homogeneity}}{=} \|-g(\vx_N) - (\vs + c\vone)\|_{\vx_N}^* \overset{\eqref{eq:lemma5}}{\leq} \frac{\|\vx_N - \vy\|_{\vx_N}}{1- \|\vx_N - \vy\|_{\vx_N}} \overset{\text{Lem. }\ref{thm:algorithm}}{\leq} \frac{r_N}{1-r_N} \overset{\text{Lem. }\ref{thm:constant-update}}{<} \frac{r}{r+1},
\end{equation}
\normalsize
when $0 < r \leq \frac{1}{4}$. Suppose $\left\|-g(c\vx_N) - \frac{\vs + c\vone}{c}\right\|_{c\vx_N}^* = r' (< \frac{r}{r+1})$. Then we have
\begin{align*}
    \left\|-g(c\vx_N) - \vone\right\|_{c\vx_N}^* &\leq \left\|-g(c\vx_N) - \frac{\vs + c\vone}{c}\right\|_{c\vx_N}^* + \left\|\frac{\vs + c\vone}{c}- \vone\right\|_{c\vx_N}^* \\
    &= r' + \left\|\frac{\vs + c\vone}{c}- \vone\right\|_{c\vx_N}^* \\
    &= r' + \|\vs\|_{\vx_N}^* \\
    &\leq r' + \frac{r}{r+1} - r' \\
    &= \frac{r}{r+1}.
\end{align*}
Thus,  $\left\|-g(c\vx_N) - \vone\right\|_{c\vx_N}^* \leq \frac{r}{r+1}$, so  $c\vx_N$ works as an initial certificate for $\vone$ as required by Algorithm \ref{alg:Newton2}. 
\end{proof} 

{\color{red} Alternate - $\|\vs\|_{\vx_N}^* \leq \frac{r}{r+1} - \|g(\vx) - (\vs + c_+'\vone)\|_{\vx}^*$? Know this is greater than 0 whenever $c_+' < c_+$. }
}

\deletethis{
\begin{algorithm2e}
  \DontPrintSemicolon
  \SetKwInOut{Input}{input}
  \SetKwInOut{Output}{outputs}
  \SetKwInOut{Params}{parameters}
  \Input{A polynomial $\vs \in \Sigma$; a gradient certificate $\vx \in \Sigma^*$ such that $-g(\vx) = \vs$.}
  \Params{An oracle for computing the barrier Hessian $H$ for $\Sigma$; a radius $r\in(0,1/4]$, a radius $\frac{r^2}{1-2r} \leq r_N \leq \frac{r}{1+2r}$, a radius $R$ with $0 < R$.}
  \Output{A certificate $\vx$ for $\vs + c\vone$, with $c > 0$, such that $\vx$ is also a certificate for $\vone$.}

  \medskip

  \Repeat{$\|\vs\|_{\vx}^* \leq R$ \do}
  {
    Set $\vx := 2\vx - H(\vx)^{-1}(\vs +c\vone )$. \label{line:init-x-update}
    
    Set $\vx_N$ to be the nearest rational point to $\vx$ with denominators
    $\left\lceil\frac{\sqrt{U}}{2} \left(\frac{(r-1)^2}{r^2-r_N+2r*r_N}\right)\|H(\vx)^{1/2}\|\right\rceil$. \label{line:init-round-x}
    
    Set $\vx := \vx_N$. 
    
    Find the largest real number $c_+$ such that 
    \[
    \|\vx - H(\vx)^{-1}(\vs + c_+\vone)\|_{\vx} \leq {\color{red} \frac{r}{r+1} \text{ -- should be } \frac{r}{r+1} - R}
    \]\label{line:init-c-update}
   
   If $\frac{\rho'}{\|\vone\|_{\vx_N}^*} \geq 1$, choose  $c_+'$ so that $c_+'$ 
   Otherwise choose $c_+'$ so that $c_+'$ is largest number with $ c_+' < c_+$ and $c_+'$ has denominator 
    $2\cdot\left\lceil\frac{\|\vone\|_{\vx_N}^*}{\rho'}\right\rceil.$ \label{line:init-c-round}
    
    Set $\Delta c := c_+' - c$.
    Set $c := c_+'$. \label{line:init-stopping}
    }
    \Return{$c$ and $\vx$.}
    \caption{Initialization}\label{alg:initialization}
\end{algorithm2e}
}

\deletethis{
\begin{lemma} Let $\vs \in \Sigma$, and let $\vx_c \in \Sigma^*$ be the certificate in Algorithm \ref{alg:initialization} which certifies $\vs + c\vone$. Then exists a constant $c > 0$ such $\|\vs\|_{\vx_c}^* \leq R$, for any $R > 0$. 
\end{lemma} 

\begin{proof} From Lemma 3.1 from \cite{DavisPapp2022}, we know that $\|c^{-1}\vx_1 - \vy_c\|_{c^{-1}\vx_1} \leq r$ whenever $c \geq \frac{1+r}{r}\|\vs\|_{\vx_1}^*$, where $\vy_c$ is the gradient certificate for $\vs + c\vone$. We have
\begin{equation}\label{eq:cvx}
    \begin{split}
        \|c^{-1}\vx_1 - \vx_c\|_{c^{-1}\vx_1} &\leq \|c^{-1}\vx_1 - \vy_c\|_{c^{-1}\vx_1} + \|\vy_c - \vx_c\|_{c^{-1}\vx_1} \\
        &\overset{\eqref{eq:self-concordance}}{\leq} r + \frac{\|\vy_c - \vx_c\|_{\vy_c}}{1 - \|\vy_c - \vx_c\|_{\vy_c}}\\
        &\overset{\eqref{eq:self-concordance}}{\leq} r + \frac{\|\vy_c - \vx_c\|_{\vy_c}}{1 - \frac{\|\vy_c - c^{-1}\vx_1\|_{c^{-1}\vx_1}}{1 - \|\vy_c - c^{-1}\vx_1\|_{c^{-1}\vx_1}}} \\
        &\leq r + \frac{\|\vy_c - \vx_c\|_{\vy_c}}{1 - \frac{r}{1-r}} \\
        &\leq r + \frac{\frac{\|\vy_c - \vx_c\|_{\vx_c}}{1 - \|\vy_c - \vx_c\|_{\vx_c}}}{1 - \frac{r}{1-r}} \\
        &\leq r + \frac{\frac{r'}{1 - r'}}{1 - \frac{r}{1-r}},
    \end{split}
\end{equation}
where $r'$ is chosen in Algorithm \ref{alg:initialization}.

Moreover, we have
\begin{equation}\label{eq:crx2}
\|c^{-1}\vx_1 - \vx_c\|_{\vx_c} \overset{\eqref{eq:self-concordance}}{\leq} \frac{\|c^{-1}\vx_1 - \vx_c\|_{c^{-1}\vx_1}}{1 - \|c^{-1}\vx_1 - \vx_c\|_{c^{-1}\vx_1}}  \overset{\eqref{eq:cvx}}{\leq} \frac{r + \frac{\frac{r'}{1 - r'}}{1 - \frac{r}{1-r}}}{1-\left(r + \frac{\frac{r'}{1 - r'}}{1 - \frac{r}{1-r}}\right)} \defeq \frac{R}{1+R},
\end{equation}
and 
\[
\|\vs + c\vone - c\vone\|_{\vx_c}^* = \|\vs\|_{\vx_c}^* \overset{\eqref{eq:lemma5}}{\leq} \frac{\|c^{-1}\vx_1 - \vx_c\|_{\vx_c}}{1-\|c^{-1}\vx_1 - \vx_c\|_{\vx_c}} \overset{\eqref{eq:crx2}}{\leq} R. 
\]

So if $R = 1$ (and $r$ and $r_{alg}$) are chosen so that this holds). it follows that $\vx_c$ certifies $\vone$. 
\end{proof} 
}
\deletethis{
\begin{lemma} Let $\vs \in \Sigma$, and let $\vx_c$ be the certificate from Algorithm \ref{alg:initialization} certifying $\vs + c\vone$.  Suppose 
\[
r + \frac{\frac{r_{alg}}{1 - r_{alg}}}{1 - \frac{r}{1-r}} \leq 1. 
\]  Then if $\|\vs\|_{\vx_c} \leq R$, it follows that $\|c^{-1}\vx_1 - \vx_c\|_{c^{-1}\vx_1} \leq \frac{R/(1-R)}{1 - R/(1-R)}.$
\end{lemma} 
\begin{proof} Suppose $\|\vs + c\vone - c\vone\|_{\vx_c}^* \leq R$. Suppose 
\[
r + \frac{\frac{r_{alg}}{1 - r_{alg}}}{1 - \frac{r}{1-r}} \leq 1. 
\] From \eqref{eq:cvx}, we know that $\|c^{-1}\vx_1 - \vx_c\|_{c^{-1}\vx_1} \leq 1$. So we can invoke \eqref{eq:revlemma5} in  
\begin{equation}\label{eq:interm}
    \|c^{-1}\vx_1 - \vx_c\|_{\vx_c} \overset{\eqref{eq:revlemma5}}{\leq} \frac{\|\vs\|_{\vx_c}^*}{1 - \|\vs\|_{\vx_c}^*} \leq \frac{R}{1-R}.
\end{equation}
Moreover,  by the definition of self concordance, we have
\[
\|c^{-1}\vx_1 - \vx_c\|_{c^{-1}\vx_1}\overset{\eqref{eq:self-concordance}}{\leq} \frac{\|c^{-1}\vx_1 - \vx_c\|_{\vx_c}}{1-\|c^{-1}\vx_1 - \vx_c\|_{\vx_c}} \leq \frac{R/(1-R)}{1 - R/(1-R)}.
\]
\end{proof} 
}

\deletethis{
\revision{
\subsection{Algorithmic complexity.}

Since the main difference between Algorithms \ref{alg:Newton2} and \ref{alg:initialization2} and their counterparts from \cite{DavisPapp2022} is the rounding, which is an inexpensive step in each iteration (assuming that the barrier Hessian $H(\vx)$ is already computed and factored for the current iterate $\vx$), and which does not change the rate of convergence (the difference between the current lower bound and the optimal one decreases exponentially with the iterations even with the inserted rounding steps), the complexity results from \cite{DavisPapp2022} carry over, except for having to adapt to the bit complexity of each iteration, instead of floating point complexity.

The number of iterations is independent of the model of computation, and it was shown in \cite[Thm.~3.6]{DavisPapp2022} to be proportional to $1/C$, and $\log(\tau/\varepsilon)$. The parameter $C$ depends not only on $n$, $d$, and $\tau$, but also on the representation of the $\Lambda$ operator (e.g., the polynomial bases). For example, for univariate polynomials nonnegative on $[-1,1]$, represented in the Chebyshev basis, the number of iterations was shown to be $\Oh(d^2\log\frac{d\tau}{\varepsilon})$.

The bottleneck in each iteration is the computation and factorization (or inversion) of the Hessian, and the upper estimation of $\|H(\vx_+\|$ for the computation of $N$ in the certificate rounding step. The factorization step in floating point arithmetic would be $\Oh(U^3)$, wherein $U=\dim(\Sigma)$, as before. Its bit complexity is also challenging to carefully bound in general, as it also depends on the representation of the $\Lambda$ operator (e.g., the polynomial bases). We shall only mention that matrix inversion in general can be computed in strongly polynomial time () the for commonly used bases (monomial, Chebyshev, etc.) the computation of $H$ is easier than its inversion.
}
}

\section{Discussion}\label{sec:discussion}
\paragraph{Bit size bounds on certificates from Algorithm \ref{alg:Newton2}}
We opted to separate the discussion on the bit sizes of the certificates and Algorithm \ref{alg:Newton2}. In principle, one could study the former question ``constructively'' by analyzing the bit sizes of the certificates computed by the algorithm, but we think it is useful to underline that both the concept of dual certificates and the bit size bounds are independent of any particular algorithm. Theorem~\ref{thm:grad-denominators} and Lemma~\ref{thm:roundedynorm} are both derived assuming that the dual certificate $\vy$ at hand is the gradient certificate for simplicity of presentation; both of these results can be easily adapted to the setting where $\vy$ is any dual certificate that is sufficiently close to the gradient certificate. Similarly, any algorithm that computes a dual certificate by computing a vector $\vx$ sufficiently close (in the local $\vx$-norm) to the gradient certificate $\vy$ will produce certificates with boundable bit sizes.

\revision{
\paragraph{The Christoffel-Darboux polynomial} The WSOS polynomial $-g(\vx)$ corresponding to a (pseudo-moment) vector $\vx\in\Ssc$ is also known as the \emph{Christoffel-Darboux polynomial}, or \emph{inverse Christoffel function}. Recent studies have focused on the properties of this polynomial, and especially on connections between the representing measures of $\vx$ and the sublevel sets of $-g(\vx)$, with applications to design of experiments \cite{DeCastro-etal2021, Lasserre2022}. For our work, the critical property of the Christoffel-Darboux polynomial is the surprising fact that this polynomial is not only WSOS, but that the gradient map $\vx \mapsto -g(\vx) = \Lambda^*(\Lambda(\vx)^{-1})$ yields an explicit WSOS representation of this polynomial: the inverse moment matrix $\Lambda(\vx)^{-1}$ is a Gram matrix that proves that $-g(\vx)$ belongs to $\Sigma$. The concept of dual certificates can be seen as a generalization of this idea: rather than mapping $\vx$ to the Christoffel-Darboux polynomial with an explicit WSOS representation via $-g$, we can map $\vx$ to explicit WSOS representations of a full-dimensional cone of WSOS polynomials $\vs$ via the $(\vx,\vs)\to \vS$ map in \eqref{eq:S-def}.
}

\paragraph{Representation-dependent bit sizes}
Although the main theorem (Theorem~\ref{thm:bitsize-grad}) provides a bit size bound using an array of unconventional parameters, it is worth noting that each of those parameters is easily computable or, in the case of $\cond(\vM)$, can be bounded easily. Although computing the matrix $\vM$ with the lowest condition number is in general a likely impossible task, any unisolvent point set and weight vector in \eqref{eq:mdef} can be used to compute a bound. In each of the special cases considered, it was easy to find a point set that either yields a small enough $\cond(\vM)$ that is dominated by other terms, or one that is provably of the optimal order of magnitude.

The other nontrivial ingredient is the constant $k_1^\ve$ defined in \eqref{eq:k1def}. Unlike the other parameters in Theorem~\ref{thm:bitsize-grad}, it depends not only on the cone $\Sigma$, but its chosen representation via the $\Lambda$ operator. (Equivalently, in the notation of Proposition~\ref{thm:Nesterov}, in depends both on the choice of the $\vp_i$ bases and the $\vq$ basis.) The example of interpolants suggests that it is a measure of conditioning, underscoring the fact that conditioning is consequential even in the case of exact-arithmetic algorithms, not only for numerical methods. Indeed, choosing poor interpolation points (say, equispaced points) to represent WSOS polynomials instead of well-conditioned ones leads to an increase in the bit sizes of the certificates, even in the univariate case, due to the astronomical Lebesgue constant that grows exponentially with the degree, dominating all the other terms in \eqref{eq:interp-univ-bitsize}. It is also this parameter, along with $\nu$, that may be reduced when the polynomials of interest have special structures such as symmetry or term- or correlative sparsity \cite{WangMagronLasserreMai2020}, showing that these structures are useful even for dual certificates.

Also note that none of these parameters need to be known in order to implement the algorithms discussed in Section \ref{sec:computing}, except for the stopping criterion of Algorithm \ref{alg:Newton2}. If we drop the requirement that the algorithm must stop when the returned bound is provably within $\varepsilon$ from the optimal lower bound, and instead run the algorithm until the progress is below a tolerance, or when the lower bound is suitably high (e.g., in applications where the goal is to prove that the input polynomial has a positive WSOS lower bound), then none of the parameters introduced in Theorem \ref{thm:bitsize-grad} need to be computed or bounded.

\paragraph{Dependence on $\nu$} The possibly most counterintuitive aspect of the bound in Theorem~\ref{thm:bitsize-grad} is that the bit size of the integer dual certificate $\overline{\vy}$ (approximated by $U\log(\|\overline{\vy}\|_\infty$)) depends on the $\Sigma$-dependent parameter $\nu$ logarithmically, rather than linearly, since (all else being equal) $\nu$ is a linear function of the number of weights $m$ (recall the notation from \eqref{eq:Swdef} and its surrounding paragraph). For example, consider a family of polyhedral cones of nonnegative polynomials that consist of nonnegative linear combinations of polynomials that are nonnegative on $S_\vw$. This is an elementary special case of WSOS polynomials, where all ``sum-of-squares'' polynomials are simply nonnegative constants, and where it is meaningful to keep adding additional weights to the representation for an increasingly good inner approximation of the cone of nonnegative polynomials without increasing the ``ambient dimension'' $U=\dim(\Sigma)$. It is clear that the sizes of conventional WSOS certificates will grow linearly as a function of $m$: an explicit WSOS decomposition will have $m$ terms; the semidefinite matrix in the representation \eqref{eq:Nesterov-Lambdastar} will have $m$ (one-by-one) semidefinite blocks $\vS_i$, etc. Yet, the dual certificate will remain a $U$-dimensional vector whose components are of size $\Oh(\log(m))$. Of course, this does not mean that such a certificate could be verified in polynomial time for an exponentially large $m$ (like in a Schm{\"u}dgen-type WSOS certificate); verifying that $\vx$ certifies $\vs$, that is, $\Lambda(H(\vx)^{-1}\vs)\succcurlyeq 0$ still requires linear time in the number of weights.

\revision{
\section*{Acknowledgements} The authors are grateful for the referees' thoughtful comments on the presentation, helping to make the paper more accessible to the symbolic computing and computational real algebraic geometry audience. DP would also like to thank Didier Henrion for pointing out the connection between gradient certificates and the Christoffel-Darboux polynomial, and Ali Mohammad Nezhad for pointing us to the results on the bit sizes of subresultants used in the proof of Corollary \ref{cor:univariate-bit-ndtau}.
}

\bibliographystyle{elsarticle-num} 
\bibliography{citations}

\appendix
\renewcommand{\thesection}{\Alph{section}}

\section{Appendix}

Here, we summarize relevant results used throughout the paper concerning the local norms $\|\cdot\|_\vx$ and $\|\cdot\|^*_\vx$ and barrier functions of the form $f = -\ln(\det(\Lambda(\cdot)))$ defined on $\Ssc$, introduced in Section \ref{sec:preliminaries}. 

\begin{lemma}\label{thm:f-properties} 
Using the notation introduced in Section \ref{sec:preliminaries}, the following hold for every $\vx\in\Ssc$:
\begin{enumerate}
    \item We have $B_{\vx}(\vx,1) \subset \left(\Sigma^*\right)^\circ$, and for all $\vu\in B_{\vx}(\vx,1)$ and $\vv \neq 0$, one has
    \begin{equation}\label{eq:self-concordance}
    1 - \|\vu - \vx\|_\vx \leq \frac{\|\vv\|_\vu}{\|\vv\|_\vx}\leq (1 - \|\vu - \vx\|_\vx)^{-1}.
    \end{equation}
    \item For all $\vv \neq 0$, if $\|\vu - \vx\|_{\vx} < 1$, we have
    \begin{equation}\label{eq:lemma4}
    \frac{\|\vv\|_{\vu}^*}{\|\vv\|_{\vx}^*} \leq \frac{1}{1-\|\vu - \vx\|_{\vx}}.
    \end{equation}
\item \label{item:gH-prop} The gradient $g$ of $f$ can be computed as
    \begin{equation}\label{eq:g}
    g(\vx) = -\Lambda^*(\Lambda(\vx)^{-1}),
    \end{equation}
    and the Hessian $H(\vx)$ is the linear operator satisfying
    \begin{equation}\label{eq:H}
    H(\vx) \vv= \Lambda^*\!\left(\Lambda(\vx)^{-1}\Lambda(\vv)\Lambda(\vx)^{-1}\right) \;\; \text{ for every } \vv \in \rr^U.
    \end{equation}
   
\item \label{item:loghom} The function $f$ is \emph{logarithmically homogeneous}; that is,
    \[f(\alpha\vx) = f(\vx) - \nu \ln(\alpha) \text{ for every }\alpha > 0\]
    where $\nu = \sum_{i=1}^m L_i$ is the \emph{barrier parameter} of $f$.
    Subsequently, the derivatives of $f$ have the following homogeneity properties:
    \begin{equation}\label{eq:log-homogeneity}
    g(\alpha \vx) = {\alpha}^{-1} g(\vx) \;\text{ and }\; H(\alpha \vx) = {\alpha}^{-2}H(\vx) \;\; \text{ for every } \alpha > 0.
    \end{equation}
    Furthermore,
    \begin{equation}\label{eq:gH-identities}
    H(\vx)\vx = -g(\vx)\quad\text{and}\quad \|g(\vx)\|_{\vx}^* = \|\vx\|_{\vx} = \sqrt{\langle -g(\vx),\vx\rangle} = \sqrt{\nu},
    \end{equation}
    where $\nu$ is the aforementioned barrier parameter.
\item \label{item:bijection} The gradient map $g$ defines a bijection between $\left(\Sigma^*\right)^\circ$ and ${\Sigma}^\circ$, In particular, for every $\vs \in {\Sigma}^\circ$ there exists a unique $\vx \in \left(\Sigma^*\right)^\circ$ satisfying $\vs = -g(\vx)$.
\deletethis{      \item If $\|\vu-\vx\|_\vx < 1$, then
    \begin{equation}\label{eq:lemma5} 
        \|g(\vu) - g(\vx)\|_{\vx}^* \leq \frac{\|\vu - \vx\|_\vx}{1 - \|\vu - \vx\|_\vx}.
    \end{equation}
    \item If $\|g(\vu) - g(\vx)\|_{\vx}^* < 1$, then 
    \begin{equation}\label{eq:revlemma5}
        \|\vu - \vx\|_{\vx}\leq \frac{\|g(\vu) - g(\vx)\|_{\vx}^*}{1 - \|g(\vu) - g(\vx)\|_\vx^*}.
    \end{equation}}
\end{enumerate}
\end{lemma}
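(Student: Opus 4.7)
My plan is to establish each property by reducing to the well-known case of the standard log-determinant barrier $F(\vX) = -\ln\det\vX$ on $\bS^L_+$, and then push everything through the (injective) linear map $\Lambda$. Most of what is claimed is folklore in the interior-point literature for self-concordant barriers; the content here is just to verify that these facts transfer to the barrier $f = F \circ \Lambda$ for a general $\Lambda$ of the form appearing in Proposition~\ref{thm:Nesterov}.

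First, the derivative formulas \eqref{eq:g} and \eqref{eq:H} are direct matrix-calculus computations: on $\bS^L_+$ one has $\nabla F(\vX) = -\vX^{-1}$ and $\nabla^2 F(\vX)[\vV,\vV] = \tr(\vX^{-1}\vV\vX^{-1}\vV)$. Applying the chain rule to $f(\vx) = F(\Lambda(\vx))$ and using the defining identity $\langle \Lambda^*(\vA),\vv\rangle = \langle \vA, \Lambda(\vv)\rangle$ of the adjoint gives exactly \eqref{eq:g} and \eqref{eq:H}. Once these are in hand, log-homogeneity (item \ref{item:loghom}) is immediate: since $\Lambda$ is linear and $\det(\alpha \vM) = \alpha^{L_i} \det(\vM)$ block-by-block, $f(\alpha \vx) = f(\vx) - \nu \ln \alpha$ with $\nu = \sum_i L_i$. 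Differentiating this identity once in $\alpha$ at $\alpha = 1$ yields $\langle g(\vx),\vx\rangle = -\nu$; differentiating $g(\alpha\vx) = \alpha^{-1} g(\vx)$ at $\alpha = 1$ gives $H(\vx)\vx = -g(\vx)$; combining these with the definition of $\|\cdot\|_\vx$ and $\|\cdot\|_\vx^*$ produces all of \eqref{eq:gH-identities}.

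Next, the self-concordance estimate \eqref{eq:self-concordance} is a standard property of $F$ as an $L$-self-concordant barrier on $\bS^L_+$ (see, e.g., \cite[Thm.~2.1.1]{Nesterov2000}). Because $\Lambda$ is linear and injective on $\Ssc$, $f$ inherits both self-concordance and the Dikin-ellipsoid inclusion $B_\vx(\vx,1)\subset\Ssc$, and inequality \eqref{eq:self-concordance} transfers verbatim. The dual bound \eqref{eq:lemma4} is then obtained from \eqref{eq:self-concordance} by duality: writing $\|\vv\|^*_\vu = \|H(\vu)^{-1/2}\vv\|$ and using that $H(\vu) \succcurlyeq (1 - \|\vu - \vx\|_\vx)^2 H(\vx)$ (the operator-norm form of \eqref{eq:self-concordance}) yields the claimed bound on the ratio $\|\vv\|_\vu^* / \|\vv\|_\vx^*$ after taking reciprocals.

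Finally, for the bijection in item \ref{item:bijection}, the map $-g$ is injective on $\Ssc$ because $f$ is strictly convex (its Hessian $H(\vx)$ is positive definite, being a pullback of $\vX^{-1}\otimes \vX^{-1}$ by the injective map $\Lambda$). Surjectivity onto $\Sc$ is the subtler point and is the main place where log-homogeneity and properness of $\Sigma$ are essential. The plan is to invoke the Legendre-transform theory for log-homogeneous self-concordant barriers (see \cite[Thm.~2.4.4 and its corollaries]{Nesterov2000}): $f^*$ is itself a log-homogeneous self-concordant barrier whose domain is exactly $-\Sc$, and the gradient of $f^*$ inverts $-g$. Since $\Sc$ has non-empty interior by the unisolvence assumption in Section~\ref{sec:preliminaries}, $-g$ maps $\Ssc$ onto $\Sc$ as claimed. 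The only conceptual difficulty in the whole lemma is this last surjectivity statement, for which one must be careful to cite a version of the Legendre duality for barriers that applies to our (possibly non-symmetric) WSOS cones.
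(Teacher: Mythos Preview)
Your proposal is correct and takes essentially the same approach as the paper: both treat the lemma as a compendium of standard facts about the log-determinant barrier pulled back through the linear map $\Lambda$, and both defer to the interior-point literature (the paper cites Renegar and Papp--Y{\i}ld{\i}z where you cite Nesterov) for self-concordance and the Legendre-duality bijection. Your write-up is actually more self-contained than the paper's, which simply lists references and says items~\ref{item:gH-prop} and~\ref{item:loghom} ``follow from calculus''; in particular, your derivation of \eqref{eq:lemma4} directly from the operator-inequality form of \eqref{eq:self-concordance} is exactly the content of the cited lemma in \cite{PappYildiz2017corrigendum}.
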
 

\begin{proof} Statement 1 is Renegar's definition of self-concordance \cite[Sec.~2.2.1]{Renegar2001} to the function $f$ defined in \eqref{eq:def:f}. Statement 2 is \cite[Lemma~4]{PappYildiz2017corrigendum}. Statements \ref{item:gH-prop} and \ref{item:loghom} follow from calculus. Statement 5 is from \cite[Sec.~3.3]{Renegar2001} .
\deletethis{\be 
\item This is Renegar's definition of self-concordance applied to the function $f$, which is a composition of an affine function and a well-known self-concordant function, and is thus self-concordant; see \cite[Sec.~2.2.1 and Thm.~2.2.7]{Renegar2001}. 
\item Straightforward calculation.
\item Straightforward calculation using the identities \eqref{eq:g} and \eqref{eq:H}. We remark that these identities hold for all LHSCBs \cite[Thm.~2.3.9]{Renegar2001}.
\item See \cite[Sec.~3.3]{Renegar2001}.
\deletethis{\item See \cite[Lemma~5]{PappYildiz2017corrigendum}.
\item This is an application of the previous claim to the conjugate barrier function of $f$.}
\item See \cite[Lemma~4]{PappYildiz2017corrigendum}.\qedhere
\ee }
\end{proof}

\end{document}